\documentclass[11pt]{article}
\usepackage{amsmath,amsthm,amssymb}
\usepackage{dsfont}
\usepackage[usenames,dvipsnames]{xcolor}
\usepackage{enumerate}
\usepackage{graphicx,hyperref}
\hypersetup{
    linktoc=page,
    linkcolor=red,          
    citecolor=blue,        
    filecolor=blue,      
    urlcolor=cyan,
    colorlinks=true           
}
\usepackage{cite}
\usepackage{comment}
\usepackage{oands}
\usepackage{tikz}
\usepackage{changepage}
\usepackage{bm}
\usepackage{bbm}
\usepackage{mathtools}
\usepackage[margin=1.in]{geometry}
\usepackage{hyperref}
\usepackage[pagewise,mathlines]{lineno}
\usepackage{appendix}
\usepackage{multicol}
\usepackage{microtype}
\usepackage[colorinlistoftodos,textsize=footnotesize]{todonotes}
\usepackage{fancyhdr}

\usepackage[numbers]{natbib}

{\theoremstyle {definition} \newtheorem {defi} {Definition} [section] }
{\theoremstyle {definition} \newtheorem {rmk} {Remark} [defi] }
{\theoremstyle {plain}  \newtheorem {theorem} [defi] {Theorem}}
{\theoremstyle {plain}  \newtheorem {corollary} [defi]{Corollary}}
{\theoremstyle {plain} \newtheorem {prop} [defi]{Proposition}}
{\theoremstyle {plain} \newtheorem {lemma}[defi] {Lemma}}

\newcommand{\expect}[1]{\mathbb{E}\left[#1\right]}
\newcommand{\Hr}{\mathrm H}
\newcommand{\keywords}[1]{\textbf{Keywords and phrases: }#1}
\newcommand{\classification}[1]{\textbf{Mathematics Subject Classification. }#1}

\begin{document}

\title{Unit boundary length quantum disk: \\a study of two different perspectives and their equivalence}
\author{Baptiste Cercl\'e\footnote{baptiste.cercle@ens-lyon.fr. D\'epartement de math\'ematiques de l'ENS de Lyon, 15 parvis Ren\'{e} Descartes, 69342 Lyon, France}}
\date{}

\maketitle
\abstract{The theory of the two-dimensional Liouville Quantum Gravity, first introduced by Polyakov in his 1981 work has become a key notion in the study of random surfaces. In a series of articles, David, Huang, Kupiainen, Rhodes and Vargas, on the one hand, and Duplantier, Miller and Sheffield on the other hand, investigated this topic in the realm of probability theory, and both provided definitions for fundamentals objects of the theory: the unit area quantum sphere and the unit boundary length quantum disk. In a recent article, Aru, Huang and Sun showed that the definitions given in the case of the sphere coincide. We study here the two different perspectives provided for the unit boundary length quantum disk and show that they define the same probabilistic objects by considering two similar limiting procedures giving rise to them.}

\vspace{0.5cm}
\keywords{Liouville Quantum Gravity, Gaussian Free Field, Gaussian Multiplicative Chaos.}

\classification{60D05, 81T20, 81T40.}

\section{Introduction}
\subsection{Motivation and background}

The study of \textit{Liouville Conformal Field Theory} first appeared in Polyakov's seminal article \cite{Pol81} in which the author described a theory of summation over Riemannian metrics based on a path integral approach. This work represents the starting point for the study of the so-called two-dimensional \textit{Quantum Gravity}, which can be roughly described as a model for defining random metrics on surfaces with fixed topology (see \cite{Sie90} for instance). In more physical terms, the approach developed by Polyakov allowed to provide a formulation of non-critical string theory along with a new model for quantizing the space-time evolution of bosonic strings.
Generally speaking, there are several ways to give a meaning to the notion of canonical random surface, but the approaches developed should be in some sense equivalent, the object of study being universal. These perspectives differ on many points, should they concern their means or motivations. We will review here some of them that have become of special interest over the past few years.

To begin with, the Liouville Conformal Field Theory has the special feature that the object of study admits an explicit path integral formulation, defined according to the Liouville action: in this context, a heuristic definition of a canonical random measure on a Riemann surface relies on a generalization of Feynman path integrals to surfaces in the following sense.
Assume that $D$ is a Riemann surface with boundary (possibly empty) $\partial D$, and $g$ any Riemannian metric on this manifold. Given a map $X:\overline{D}\rightarrow\mathbb{R}$, one can define the Liouville action functional on the Riemann surface $S(X,g)$ by the (formal) expression \begin{equation}\label{Liouville_action}
    S(X,g)=\frac{1}{4\pi}\int_D (|\nabla_g X|^2 + Q R_g X + 4\pi\mu e^{\gamma X}) d\lambda_{g} + \int_{\partial D} (Q K_g X + 2\pi\mu_{\partial}e^{\frac{\gamma}{2} X})d\lambda_{\partial g}
\end{equation}
where the geometric terms $R_g$, $K_g$, $d\lambda_{g}$ and $d\lambda_{\partial g}$ are respectively the Ricci scalar curvature, the geodesic curvature, the volume form and the line element in the metric $g$, while the physical constants $\gamma$, $Q$, $\mu$ and $\mu_{\partial}$ correspond to the \textit{Liouville coupling constant}, the \textit{Liouville background charge}, and the \textit{cosmological constants}. In this framework, the law of a canonical random field $\phi$ on $D$ should be described by \begin{equation}\label{law_field}
    \expect{F(\phi)}:=\frac{1}{\mathcal{Z}}\int F(X)e^{-S(X,g)}dX
\end{equation}
where $dX$ would stand for \lq\lq the Lebesgue measure'' on a space of maps $X:\overline{D}\rightarrow\mathbb{R}$, and $\mathcal{Z}:=\int e^{-S(X,g)}dX$ is a renormalization constant, called the \textit{partition function}.
This random field being defined, we can consider a random measure on the surface by taking $d\mu=e^{\gamma\phi}d\lambda_{g}$. 

While the theory in the physical perspective is rather well understood, the mathematical study of these notions is relatively recent. Motivated by this physical background, David, Kupiainen, Rhodes and Vargas in their pioneering work \cite{DKRV16} launched a program whose goal was to provide a rigorous mathematical construction of the objects involved in the physics literature, program based on probability theory. To do so and in a subsequent series of work the authors along with Guillarmou and Huang rigorously constructed Liouville quantum field theory on Riemann surfaces with fixed topology---on spheres in \cite{DKRV16}, disks in \cite{HRV16}, tori in \cite{DRV16} and on higher genus surfaces in \cite{GRV16}---thanks to the introduction of a rigorous probabilistic framework, featuring two fundamental objects of probability theory: the \textit{Gaussian Free Field} and the \textit{Gaussian Multiplicative Chaos}.  
Under this framework, the field $\phi$ whose law is given by the Liouville action functional can be expressed in terms of the \textit{Gaussian Free Field} (GFF in the sequel), a random distribution on $D$, and that is in some sense an extension of the Brownian Motion where the variable of time is no longer one-dimensional but now lives in a two-dimensional space. The random measure then defined (formally) by the expression $\mu(dz)=e^{\gamma \phi(z)}d\lambda_g(z)$ is called a \textit{Gaussian Multiplicative Chaos} (GMC in the sequel). However the latter expression is purely heuristic, since the field $\phi$ is not defined pointwise. In order to provide a rigorous meaning to this formal writing, one uses a procedure of approximation of this field by smooth functions and then takes the limit of the corresponding measures (see \cite{DS10} for instance).
In order to determine the exact form of the field, the first step in the series of articles listed above consists in defining in rigorous terms the partition function studied in the physics literature, that does exist provided that one works under the so-called \textit{Seiberg bounds}. It is then possible to give a meaning to the random field defined by normalization via the partition function, which in turn gives rise to a random measure $\mu(dz)=e^{\gamma \phi(z)}d\lambda_g(z)$ on the surface. Heuristically this random measure is the volume form associated to the (formal) Riemannian metric $e^{\gamma\phi}g$, and is rigorously defined thanks to the procedure described above. This approach allowed the authors to recover fundamental properties predicted in the physics literature, such as the conformal Ward and BPZ identities in \cite{KRV15} or the DOZZ formula in \cite{KRV_18}.

On another perspective, Duplantier, Miller and Sheffield in their fundamental work \cite{DMS14} also provided definitions for similar objects, as suggested by Sheffield in \cite{S16}. In the latter, Sheffield defined according to a limiting procedure what he called the \textit{unit area quantum sphere} and the \textit{unit boundary length quantum disk} along with other natural random quantum surfaces such as \textit{quantum wedges} and \textit{quantum cones} and conjectured that these objects should be related to a scaling limit of uniform quadrangulations. In \cite{DMS14}, the authors provide a more explicit construction of these objects in terms of Bessel processes, and study their relationship with three key objects in the theory of random geometry: the Gaussian Free Field, the \textit{Schramm-Loewner Evolutions} and \textit{Continuum Random Trees}. These links allow them to derive many properties, among them the description of the scaling limit (in some sense) of a certain class of random planar maps in terms of CLE-decorated Liouville Quantum Gravity. 

These two perspectives differ on many points: first of all, the objects they consider do not actually live in the same space. In the first three articles, the authors explicitly constructed a random measure on a determined surface, after having picked three points which represent the singularities of the measure. Conversely, the quantum disk and sphere are actually defined in terms of \textit{quantum surfaces} in \cite{DMS14}, which are equivalence classes (modulo conformal maps) of surfaces endowed with a random measure. More precisely, two pairs $(D,h)$ (with $D$ a Riemann surface and $h$ a distribution on $D$) and $(\tilde{D},\tilde{h})$ are said to be equivalent when there exists a conformal mapping $\psi:\tilde{D}\rightarrow D$ such that $\tilde{h}=h\circ\psi+Q\log |\psi'|$ where $Q=\frac{\gamma}{2}+\frac{2}{\gamma}$. This defines an equivalence relation on the set of pairs $(D,h)$, and by doing so, the Liouville Quantum Gravity measures on $D$ defined by the distribution $h$ (that is the pair of random measures $\mu_h=e^{\gamma h}d\lambda$ and $\nu_h=e^{\frac{\gamma}{2} h}d\lambda_{\partial}$) do not actually depend on the representative of the equivalence class, since one has the property of change of variable as stated in \cite[Proposition 2.1]{DS10}: if we define a field $\tilde{h}$ on $\tilde{D}$ by setting
\begin{equation}\label{CCD2}
\tilde{h}:=h\circ\psi+Q\log |\psi'|
\end{equation}
then the pair of random measures $(\mu_{\tilde{h}}, \nu_{\tilde{h}})$ on $\tilde{D}$ (defined in the same as $\mu_h$ and $\nu_h$) and the pushforward under $\psi^{-1}$ of the measures $(\mu_h,\nu_h)$ on $D$ are almost surely equal. Showing that two quantum surfaces are equivalent is in general not obvious, and in the present case the two perspectives of defining quantum surfaces do not actually rely on similar procedures.
On the one hand, the first approach allowed the authors to provide an explicit expression for the law of the measures, but in order for the construction to make sense one needs to choose deterministically at least three points. On the other hand, it may be more convenient to rely on a limiting procedure to construct the object via the second approach, but for this construction to make sense is required to choose in a deterministic way only two points, other points (which facilitate the formulation of the equivalence) being picked at random.

A third definition for these objects could be to consider the quantum surface as the scaling limit of natural discrete random planar maps with the topology of this surface, an approach followed first by Le Gall and Miermont in \cite{LeG13} and \cite{Mie13} with the definition of the so-called \textit{Brownian map}, and then by Bettinelli and Miermont in \cite{BM18} with the \textit{Brownian disk}. In these articles, the authors defined the Brownian surface as a metric space, without consideration for the conformal structure, while the two perspectives we have studied so far construct a conformal structure on the surface for which it was unclear that the natural metric it comes with was well-defined. However in the series of article \cite{MS15a}, \cite{MS16a} and \cite{MS16b}, Miller and Sheffield constructed a metric on their quantum surfaces (the QLE-metric) and showed that their definitions coincide with the one given for the Brownian surfaces in the special case where $\gamma=\sqrt{\frac{8}{3}}$. Extending the definition of the metric to the whole range of $\gamma\in(0,2)$ has been achieved very recently (see \cite{DDDF19} and \cite{GM19}).

In the perspective of unifying different approaches, a recent article \cite{AHS16} by Aru, Huang and Sun showed that the two definitions given for the unit area quantum sphere define the same quantum surface. However a similar statement has not been proved yet in the case of the disk. This is the main result of this article.

Before moving on to the statement of the result, let us first give one application of this result in the realm the probability theory. In \cite{AG19}, the authors provide a computation of the conditional law of the area of the quantum disk when conditioned to have unit boundary length, which is expressed in terms of the so-called mating-of-trees variance constant and uses the mating-of-trees framework. Since the law of the area of the unit boundary length quantum disk computed following the approach by Huang, Rhodes and Vargas should actually be the same, the value of this constant may be recovered by using the Huang-Rhodes-Vargas approach. The computation of this constant would be of significant importance in the study of the scaling limit of some models of Random Planar Maps.

\paragraph{Acknowledgements} I am very grateful to J. Miller for having suggested this problem and for discussions and relectures. I am also thankful to E. Gwynne and G. Remy for having provided additional motivation for the problem. I would also like to thank the Statistical Laboratory of the Centre for Mathematical Sciences in Cambridge for the support and hospitality provided while this problem was being investigated, as well as the organisers of the program \textit{RGM Follow Up} that took place in the Isaac Newton Institute for Mathematical Sciences, Cambridge, during which I have been able to improve my understanding of the different perspectives. Eventually I would like to thank the anonymous referees for their numerous comments and suggestions that helped me to clarify the present paper.
\subsection{Statement of the equivalence and strategy of proof}

To give a precise statement of our main result, it is necessary to define precisely the two objects we will focus on in the sequel. Hence we fix a constant parameter $\gamma\in(0,2)$ throughout the rest of this subsection, and work in the unit disk $\mathbb{D}$.

In the first construction, one starts by choosing three distinct points $z_1,z_2,z_3$ on the boundary of the unit disk $\partial \mathbb{D}$, and construct a random field $h_L$, which is a GFF---with free boundary conditions and zero mean on the boundary of the disk $\partial\mathbb{D}$---to which have been added the corresponding \textit{log-singularities}
\begin{equation}
    - \sum_{i=1}^3 \gamma \log|z-z_i|.
\end{equation}
Using this field, we define a pair of random measures on $\mathbb{D}$ (called the \textit{bulk measure}) and $\partial \mathbb{D}$ (called the \textit{boundary measure}) thanks to the theory of GMC by taking (formally) $\mu_{h_L}(dz):=e^{\gamma h_L}\lambda(dz)$ and $\nu_{h_L}(dz):=e^{\frac{\gamma}{2} h_L}\lambda_{\partial}(dz)$, where $\lambda$ and $\lambda_{\partial}$ are the Lebesgue measure on $\mathbb{D}$ and $\partial \mathbb{D}$. 
Once these random measures well-defined, the law of the unit boundary length quantum disk may be set as the law of the pair of measures thus defined under some weighted probability measure. The precise meaning of this construction will be detailed in Subsection \ref{def_HRV}.
We refer to the pair of random measures thus constructed as the \textit{unit boundary length quantum disk with three log-singularities}, which we will denote $(\mu_{\mathrm{HRV}}^{\mathrm{UBL}},\nu_{\mathrm{HRV}}^{\mathrm{UBL}})$. 

Let us now turn to the second perspective. Recall the definition of the notion of quantum surface as a class equivalence of pairs $(D,h)$ modulo conformal mappings, with the rule of change of variable given by Equation \eqref{CCD2}.
This equivalence relation may be extended to include the notion of \textit{marked points}. For any $(x_1...x_k)\in D^k$ and $(y_1...y_k)\in \tilde{D}^k$ we may assume that, in addition to Equation \eqref{CCD2}, $\psi(y_i)=x_i$ for any $1\leq i\leq k$: a class equivalence of such $(k+2)$-tuples is called a \textit{quantum surface with $k$ marked points}. Here, the unit boundary length quantum disk is a random quantum surface with three marked points, which has the law of $(\mathbb{D}, h, -1, 1, z_3)$ where $h$ is a random distribution on $\mathbb{D}$ constructed thanks to an encoding with Bessel processes and $z_3$ is sampled on the boundary of $\mathbb{D}$ from the measure $\nu_h$, where we have defined $(\mu_h,\nu_{h})=(e^{\gamma h(z)}\lambda(dz),e^{\frac{\gamma}{2} h(z)}\lambda_{\partial}(dz))$. We will refer to the law of this quantum surface as the \textit{unit boundary length quantum disk with three marked points}. Again, the precise definition of this object will be explained in more details in Subsection \ref{def_DMS}.

In order to state an equivalence between these two objects, we can notice that for any three distinct points $(z_1,z_2,z_3)$ on the boundary of the disk, there is a unique representative of the unit boundary length quantum disk whose marked points are $(z_1,z_2,z_3)$, which we call an embedding of the unit boundary length quantum disk with marked points $(z_1,z_2,z_3)$. We denote the law of the measures obtained in this embedding $(\mu_{DMS}^{UBL},\nu_{DMS}^{UBL})$. We are now ready to state our main result:
\begin{theorem}[Equivalence of the perspectives]\label{Equ}\hspace{0cm}\\
Let $\mathbb{D}$ be the unit disk and $(z_1,z_2,z_3)$ be distinct points on its boundary.
Let $(\mu_{HRV}^{UBL},\nu_{HRV}^{UBL})$ be the unit boundary length quantum disk with three log-singularities.
Likewise assume that $(\mu_{DMS}^{UBL},\nu_{DMS}^{UBL})$ in an instance of the unit boundary length quantum disk with three marked points embedded into $\mathbb{D}$ so that the marked points are $(z_1,z_2,z_3)$.

Then $(\mu_{HRV}^{UBL},\nu_{HRV}^{UBL})$ and $(\mu_{DMS}^{UBL},\nu_{DMS}^{UBL})$ have same law.
\end{theorem}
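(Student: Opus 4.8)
The plan is to exhibit both objects as limits of the \emph{same} family of finite-$\mu$ Liouville measures on the disk, following the strategy used by Aru, Huang and Sun \cite{AHS16} for the sphere, and adapting it to the boundary setting. More precisely, for a cosmological constant $\mu>0$ (here I take the bulk constant to be $0$ and keep only a boundary cosmological constant $\mu_\partial$, as is natural for the disk) and three boundary insertions with weights that sit at the boundary Seiberg threshold, one has a well-defined probability measure $\mathrm{LF}_{\mu_\partial}^{(z_1,z_2,z_3)}$ on fields $h$ on $\mathbb{D}$ given by the Liouville action with the three log-singularities added. The first step is to \emph{disintegrate} this measure according to the total boundary length $\nu_h(\partial\mathbb{D})=\ell$; the conditional law given $\{\nu_h(\partial\mathbb{D})=1\}$ is, by definition (to be recalled in Subsection \ref{def_HRV}), the law of $(\mu_{HRV}^{UBL},\nu_{HRV}^{UBL})$, after an appropriate reweighting that cancels the $\ell$-dependence coming from the insertions. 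The key computation here is the exact dependence of the $\mathrm{LF}_{\mu_\partial}$-law on $\ell$: a shift $h\mapsto h+c$ rescales $\nu_h$ by $e^{\gamma c/2}$, so integrating out the constant mode turns the law on fixed-boundary-length surfaces into an explicit Gamma-type density in $\ell$, and the conditional law does not depend on $\mu_\partial$.

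The second step is to do the analogous disintegration on the Duplantier--Miller--Sheffield side. Here I would use the fact, established in \cite{DMS14} and to be recalled in Subsection \ref{def_DMS}, that a quantum disk with two marked boundary points and boundary length $\ell$ can be produced by a Bessel/excursion construction, and — crucially — that \emph{welding} or \emph{resampling} arguments from the mating-of-trees framework give an alternative field-theoretic description of the quantum disk: after adding a third marked point sampled from $\nu_h$ and embedding so that the three marked points are $(z_1,z_2,z_3)$, the resulting field has, up to the same $\ell$-dependent reweighting, the law of a GFF with free boundary conditions plus the three $\gamma$-log-singularities — i.e. exactly $\mathrm{LF}_{\mu_\partial}^{(z_1,z_2,z_3)}$ conditioned on $\nu_h(\partial\mathbb{D})=\ell$. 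Concretely, I would (i) recall the decomposition of the DMS quantum disk field into its projection onto functions harmonic in $\mathbb{D}$ and its orthogonal complement, identify the harmonic part with the log-singularities via the change-of-coordinates rule \eqref{CCD2} applied to the embedding map, and identify the orthogonal part with a free-boundary GFF; (ii) check that the two-marked-point law, after adding the $\nu_h$-sampled third point, produces exactly the insertion at $z_3$ with the correct weight $\gamma$ (the "typical point" insertion), which is the standard rooting-invariance / Girsanov computation; (iii) read off the $\ell$-dependence and match it with the one found in Step 1.

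The third step is purely bookkeeping: having shown that both $(\mu_{HRV}^{UBL},\nu_{HRV}^{UBL})$ and $(\mu_{DMS}^{UBL},\nu_{DMS}^{UBL})$ arise as the $\ell=1$ conditional law of the \emph{same} measure $\mathrm{LF}_{\mu_\partial}^{(z_1,z_2,z_3)}$ on $\mathbb{D}$ (with the same reweighting), one concludes that they have the same law; independence of the answer from $\mu_\partial$ is automatic and provides a consistency check. If instead one prefers the genuinely limiting formulations suggested in the abstract (both objects as $\epsilon\to 0$ limits of $\epsilon$-neighbourhoods / $\mathrm{GMC}$ normalizations), then Step 3 becomes: set up the two limiting procedures on a common probability space, show each finite-$\epsilon$ object is absolutely continuous with respect to $\mathrm{LF}_{\mu_\partial}$ conditioned on $\ell\in[1-\epsilon,1+\epsilon]$ with Radon--Nikodym derivatives converging to $1$, and pass to the limit using the tightness/uniform-integrability estimates for GMC boundary mass from \cite{DS10}.

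The main obstacle I expect is Step 2(ii)--(iii): reconciling the \emph{intrinsic}, conformal-structure-free DMS description (Bessel excursion, then a uniformly-chosen root on the boundary) with the \emph{fixed-surface} HRV description in which the three singularities are pinned at deterministic points $z_1,z_2,z_3$. The delicate points are (a) correctly tracking the Jacobian factors $Q\log|\psi'|$ picked up when embedding the abstract quantum disk so that its two marked points go to $-1$ and $1$ and the $\nu_h$-sampled point goes to $z_3$ — this is where the Möbius group of $\mathbb{D}$ fixing two boundary points enters and must be integrated out against the right measure; and (b) verifying that "sampling the third point from $\nu_h$" produces precisely a $\gamma$-insertion (and not a $\gamma/2$- or other-weight insertion) with the correct multiplicative constant, which requires a careful Cameron--Martin/Girsanov argument on the boundary GMC. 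Getting these two normalizations to agree is exactly the content of the equivalence, and is where the bulk of the technical work will lie; everything else is a matter of carefully recalling the definitions in Subsections \ref{def_HRV} and \ref{def_DMS} and invoking the change-of-coordinates formula \eqref{CCD2}.
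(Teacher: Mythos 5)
Your primary plan — disintegrating a single finite-$\mu_\partial$ Liouville field measure $\mathrm{LF}_{\mu_\partial}^{(z_1,z_2,z_3)}$ by boundary length and recognizing both objects as the $\ell=1$ slice — is genuinely different from what the paper does. The paper never works with finite-$\mu_\partial$ measures at all, and never tries to relate the Bessel-excursion encoding of the DMS quantum disk directly to a GFF with three log-singularities. Instead it takes the GFF-based approximation scheme of \cite[Prop.~A.1]{DMS14} (a GFF on a shrinking half-disk with a $\gamma$-log-singularity, conditioned on boundary length near $e^C$), restates it in the form of Proposition~\ref{ADMS1bis}, and then compares two ways of producing the third marked point: sampling it from $\nu_{h^\varepsilon}$ under the unweighted conditioned law (giving the DMS object) versus under the reweighted law $\propto \nu_{h^\varepsilon}(\partial\mathbb{H})\,d\mathbb{P}^\varepsilon_\delta$ (giving the HRV object, via Propositions~\ref{Sampling1} and~\ref{Approximation}). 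The two constructions then agree in the limit for a soft reason: after conditioning $\nu_{h^\varepsilon}(\partial\mathbb{H})\in[e^{-\gamma\delta},e^{\gamma\delta}]$, the reweighting factor tends to $1$, so the total-variation distance between the two samplings vanishes as $\delta\to0$. This is precisely the ``Radon--Nikodym derivative converging to $1$'' mechanism you sketch in your alternative Step~3, and the Girsanov/rooting computation you flag in obstacle~(b) is exactly \cite[Lemma~A.7]{DMS14}, which the paper invokes to turn the $\nu_h$-reweighting into an extra $\tfrac{\gamma}{2}G_{D^\varepsilon}(\cdot,\hat w)$ shift at the sampled point.

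The gap in your primary plan is Step~2(ii)--(iii) itself: you correctly identify it as the crux, but you do not say how you would get from the intrinsic Bessel-excursion description of the DMS field to an explicit ``free-boundary GFF $+$ three $\gamma$-insertions, conditioned on boundary length'' description. This is not a bookkeeping step; there is no off-the-shelf change of variables that turns the radial part $\tfrac{2}{\gamma}\log e$ of a Bessel excursion into the radial part of $h_L$, and integrating out the remaining Möbius freedom against ``the right measure'' is exactly the subtle point you'd have to work out. The paper sidesteps this entirely: the approximation of \cite[Prop.~A.1]{DMS14} already lives on the GFF side of the dictionary, so both objects become $\varepsilon,\delta\to0$ limits of the same GFF-plus-log-singularity object, and the only thing to control is the reweighting by $\nu_h(\partial\mathbb{H})$ and the negligibility of the harmonic remainder $r_\varepsilon$ (Lemmas~\ref{LOCS}, \ref{LOCS2}, and the moment estimates of Lemma~\ref{CIU}). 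If you pursue your alternative Step~3 you will essentially be reconstructing the paper's argument, and you will need those explicit estimates (moment bounds on $\nu_{h_L+a_\varepsilon g}(\partial\mathbb{H})$, and the tightness of the location $\hat w^\varepsilon$ of the sampled point on the $|\log\varepsilon|^{2/3}$ scale) to make the passage to the limit rigorous.
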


In \cite{BSS14}, Berestycki, Sheffield and Sun proved that the measure constructed from a field which is locally mutually absolutely continuous with respect to a GFF actually determines the field from which it has been defined. Briefly after the statement of the main result \cite[Theorem 1.1]{BSS14} of the article, the authors claim that it can be applied in the two contexts we have exposed. This  allows us to give a similar statement in terms of the underlying fields, and therefore in terms of quantum surfaces.
\begin{corollary}[Equivalence of the perspectives, alternative formulation]\label{statement_quantum}
\hspace{0cm}\\
Let $\mathbb{D}$ be the unit disk and $(z_1,z_2,z_3)$ be distinct points on its boundary. Let $h_0$ be a Gaussian Free Field on $\mathbb{D}$ with free boundary conditions and mean zero on $\partial\mathbb{D}$, \[
h_L:=h_0 - \sum_{i=1}^3 \gamma \log|z-z_i|
\]
and let $h^*$ have the law of the field $h_L-\frac{2}{\gamma}\log \nu_{h_L}(\partial\mathbb{D})$ under the weighted probability measure $\propto \nu_{h_L}(\partial\mathbb{D})^{\frac{2Q-3\gamma}{\gamma}} d\mathbb{P}$. 
Then the quantum surface $(\mathbb{D},h^*, z_1,z_2,z_3)$ has the law of the unit boundary length quantum disk with three marked points.
\end{corollary}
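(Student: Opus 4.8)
The plan is to read Corollary~\ref{statement_quantum} as the translation of Theorem~\ref{Equ} from the level of the GMC measures to the level of the underlying fields, the dictionary between the two being supplied by \cite[Theorem~1.1]{BSS14}. The first step is to unwind the definition of Subsection~\ref{def_HRV} and observe that the unit boundary length quantum disk with three log-singularities is, by construction, the pair of GMC measures $(\mu_{h^*},\nu_{h^*})$ generated by exactly the field $h^*$ written in the statement. Indeed, disintegrating the Liouville measure attached to $h_L$ (a free-boundary GFF carrying three boundary insertions of strength $\gamma$, the precise counterpart of the log-singularities $-\gamma\log|z-z_i|$) over the value of the total boundary length produces the weight $\nu_{h_L}(\partial\mathbb{D})^{(2Q-3\gamma)/\gamma}$, while the deterministic shift $-\tfrac{2}{\gamma}\log\nu_{h_L}(\partial\mathbb{D})$ is the unique additive constant rescaling that length to one, because translating a field by $c$ multiplies its boundary GMC by $e^{\gamma c/2}$. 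Thus $(\mu_{HRV}^{UBL},\nu_{HRV}^{UBL})=(\mu_{h^*},\nu_{h^*})$, and this is essentially bookkeeping once Subsection~\ref{def_HRV} is in place.

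Next I would apply Theorem~\ref{Equ}: the pair $(\mu_{h^*},\nu_{h^*})$ has the same law as $(\mu_{DMS}^{UBL},\nu_{DMS}^{UBL})$, the pair of measures read off from the unique embedding of the unit boundary length quantum disk with three marked points whose marked points are $z_1,z_2,z_3$; write $h_{DMS}$ for the field of that embedding, so $(\mu_{DMS}^{UBL},\nu_{DMS}^{UBL})=(\mu_{h_{DMS}},\nu_{h_{DMS}})$. To pass from measures back to fields I would invoke \cite[Theorem~1.1]{BSS14}: a field that is locally mutually absolutely continuous with respect to a GFF is almost surely a measurable function $\Phi$ of the pair of bulk and boundary GMC measures it generates, the map $\Phi$ being the same for all such fields. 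Since both $h^*$ and $h_{DMS}$ lie in this class, one has $h^*=\Phi(\mu_{h^*},\nu_{h^*})$ and $h_{DMS}=\Phi(\mu_{h_{DMS}},\nu_{h_{DMS}})$ almost surely, so pushing the equality of laws of the measure pairs through $\Phi$ yields $h^*\overset{d}{=}h_{DMS}$. As the three marked points are the same deterministic boundary points $z_1,z_2,z_3$ on both sides, this lifts to $(\mathbb{D},h^*,z_1,z_2,z_3)\overset{d}{=}(\mathbb{D},h_{DMS},z_1,z_2,z_3)$; the right-hand side is by definition an embedding of the unit boundary length quantum disk with three marked points, and since the law of a quantum surface does not depend on the chosen representative, the corollary follows.

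The step I expect to be the main obstacle is the verification that \cite[Theorem~1.1]{BSS14} genuinely applies on both sides, i.e. that $h^*$ and $h_{DMS}$ are locally mutually absolutely continuous with respect to a GFF. For $h^*$ this means controlling the effect of the log-singularities at $z_1,z_2,z_3$ together with the reweighting and the random shift, and checking that away from those three points the law of the field differs from that of a GFF only by a locally absolutely continuous change; for $h_{DMS}$, which is built through a Bessel encoding in Subsection~\ref{def_DMS}, one must produce the corresponding interior comparison with a GFF. This is indicated to hold in \cite{BSS14}, but it should be made explicit here. A secondary, more technical point is to ensure that one can use a single measurable functional $\Phi$ defined on the common almost-sure support of the two laws, so that the push-forward argument is rigorous rather than merely formal.
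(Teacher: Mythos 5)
Your argument is correct and is essentially the paper's own (implicit) proof: the paper never writes out a separate proof of Corollary~\ref{statement_quantum}, it simply observes in the paragraph preceding the statement that \cite[Theorem~1.1]{BSS14} (the GMC measure determines the field for fields locally mutually absolutely continuous with a GFF) upgrades the measure-level equality of Theorem~\ref{Equ} to a field- and quantum-surface-level equality, which is exactly the chain of reasoning you lay out. Your bookkeeping identification of $(\mu_{HRV}^{UBL},\nu_{HRV}^{UBL})$ with $(\mu_{h^*},\nu_{h^*})$ and your caveat that the applicability of BSS14 on both sides rests on a claim in that paper rather than a detailed check both match the level of rigor of the source.
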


The two constructions that we have given are rather different, and in most cases it is not obvious that two laws on fields $h$ induce equivalent quantum surfaces. However, let us give some intuition of why such a result may be true: first of all, the choice of three marked points on its boundary fixes a conformal structure on the disk, but still all the disks with three marked points on the boundary are conformally equivalent, which is no longer true if we choose four or more marked points on its boundary. Secondly, these two objects are both conjecturally related to scaling limits of some Random Planar Maps models for the whole range of $\gamma\in(0,2)$: for instance to random quadrangulations with the topology of the disk for the special value of $\gamma=\sqrt{\frac{8}{3}}$, as stated in \cite[Section 5]{HRV16} and \cite[Section 6]{S16}.

The structure of the article can be described as follows. To start with, Section \ref{general_defi} is dedicated to providing the analytical and probabilistic background necessary in the sequel. We then study in Section \ref{two_persp} the two different perspectives and highlight a limiting procedure from \cite{DMS14} leading to the unit boundary length quantum disk with \textit{three marked points}. We then show that a slight modification of this procedure gives in the limit the unit boundary length quantum disk with \textit{three log-singularities}, and that the two limiting laws are actually the same by noticing that the perturbation becomes negligible in the limit. This is the content of Section \ref{equal}.

\section{General setting and definitions}\label{general_defi}
Throughout this document we will consider complex domains whose boundary (when non-empty) consists of finitely many lines, circles or semi-circles; we denote without loss of generality by $D$ such a domain and by $\partial D$ its boundary. We may also introduce the notations $\lambda$ and $\lambda_{\partial}$ that stand for the Lebesgue measures on $D$ and $\partial D$. The purpose of this section is to expose briefly the objects we will work with in the sequel.
\subsection{Analytic background}

\subsubsection{Sobolev spaces}
In the rest of the article, we will work on the general functional spaces defined below.
We start by considering the case where $D$ is different from the whole plane, and define $\Hr_\mathrm{s}(D)$ to be the set of smooth functions $f:D\rightarrow \mathbb{R}$ with compact support included in $D$ (we refer to these as Dirichlet or zero boundary conditions). Likewise $\Hr_{\partial}(D)$ is the set of smooth functions with mean zero on the boundary of $D$ (also known as Neumann or free boundary conditions).\\
We endow these spaces with the \textit{Dirichlet inner product} $(\cdot{},\cdot{})_{\nabla}$ defined by \begin{equation}
(f,g)_{\nabla}:=\frac{1}{2\pi}\int_D (\nabla f(z)\cdot\nabla g(z))\lambda(dz)
\end{equation}
which has the fundamental property to be invariant under conformal mapping in dimension 2.\\ 
Then we denote by $\Hr(D)$ and $\Hr_\mathrm{N}(D)$ the Hilbert space completion of $\Hr_\mathrm{s}(D)$ and $\Hr_{\partial}(D)$ when endowed with the $L^2$ norm associated to the Dirichlet inner product.
We can also take into account the fact that the domain $D$ has a boundary as follows. Let $L$ be a linear part of $\partial D$ and define $\Hr_\mathrm{m}(D)$ as the set of smooth functions with compact support included in $D\cup L$. The Sobolev space $\Hr_\mathrm{M}(D)$ with mixed boundary conditions is then defined by taking the Hilbert space completion of $\Hr_\mathrm{m}(D)$ with respect to the Dirichlet inner product. In terms of GFF this means that the field has free boundary conditions on $L$ and zero boundary conditions on $\partial D\setminus L$.
In the case of the whole plane, we follow the definition from \cite{MS13}, by working in the completion of the set of smooth functions with compact support and zero mean on $\mathbb{C}$ endowed with the Dirichlet inner product.

In the sequel we will denote by $\mathbb{P}_{D}$ the set of probability measures of the form $\rho(z)\lambda(dz)$ on $D$, that is to say the set of smooth functions $D\to\mathbb{R}^+$ with $\int_D \rho(z)\lambda(dz)=1$ which we refer to as the \lq\lq background measures". For any such $\rho$ in $\mathbb{P}_D$ and $f$ in one of the Hilbert spaces we set \begin{equation}
m_{\rho}(f):=\int_D f(z)\rho(z)\lambda(dz).
\end{equation}
When $D$ is bounded and $\rho$ is the uniform probability measure on $D$, we will omit the dependence in $\rho$ in order not to overload the notations.
We may proceed in the same way to define $\mathbb P_{D, \partial}$ as the set of probability measures on the closure $\overline D$ of $D$ that are supported on a line or a (semi-)circle $L$ and similarly denote 
\begin{equation}
m_{\rho,\partial}(f):=\int_{L} f(z)\rho(z)\lambda_{\partial}(dz).
\end{equation}
We will often forget about the $\partial$ in the notation of $m_{\rho,\partial}$; more generally if $S$ is some part of $\partial D$ we may denote simply $m_{S}:=m_{\rho,\delta}$ where $\rho$ is the uniform probability measure on $S$.  

\subsubsection{Orthogonal decompositions of the Sobolev spaces}
The GFF is known to enjoy a so-called \textit{Markov property}, in the sense that to an orthogonal decomposition of the Sobolev spaces described above is associated an independent decomposition for the associated GFF. In the sequel we will make use of the following orthogonal decompositions of the Sobolev spaces. We introduce the notation $B(x,r)$ for the Euclidean ball of radius $r$ and centered at $x\in\mathbb C$. Our first decomposition applies to $D=\mathbb{H}$ the upper half-plane endowed with $\Hr_N(\mathbb{H})$ the Sobolev space with free boundary conditions on $\partial\mathbb{H}$.
\begin{prop}[Radial-angular decomposition]\label{OD1}
\hspace{0cm}\\
Let $\Hr_\mathrm{ang}(\mathbb{H})$ (resp. $\Hr_\mathrm{rad}(\mathbb{H})$) be the Hilbert space completion of the set of the $f\in H_s(\mathbb H)$ with mean zero on every semi-circle $\partial B(0,r)\cap \mathbb{H}$ (resp. constant on every semi-circle $\partial B(0,r)\cap \mathbb{H}$). 

Then $\Hr_N(\mathbb H)=\Hr_\mathrm{ang}(\mathbb H)\oplus \Hr_\mathrm{rad}(\mathbb H)$.
\end{prop}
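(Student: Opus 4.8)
The plan is to transport the problem to an infinite strip by a conformal change of coordinates, where the radial and angular parts become, respectively, the functions depending only on the longitudinal variable and those with zero average in the transverse variable; there the decomposition becomes transparent from a Fourier (cosine) expansion. First I would fix the conformal map $\Phi\colon\mathbb{H}\to\mathcal{S}:=\mathbb{R}\times(0,\pi)$ given by $\Phi(z)=\log z$, which sends the point of modulus $r$ and argument $\theta$ to $(t,\theta)=(\log r,\theta)$; it maps the semi-circles $\partial B(0,r)\cap\mathbb{H}$ to the vertical segments $\{t=\log r\}\times(0,\pi)$ and $\partial\mathbb{H}$ to $\partial\mathcal{S}=\mathbb{R}\times\{0,\pi\}$. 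Since the Dirichlet inner product $(\cdot,\cdot)_\nabla$ is conformally invariant in dimension two, $f\mapsto f\circ\Phi^{-1}$ is an isometry of $\Hr_N(\mathbb{H})$ onto the Sobolev space $\Hr_N(\mathcal S)$ with free boundary conditions on $\partial\mathcal S$, and under this isometry ``$f$ constant on every semi-circle'' becomes ``$f$ independent of $\theta$'' while ``$f$ of mean zero on every semi-circle'' becomes ``$\int_0^\pi f(t,\theta)\,d\theta=0$ for a.e.\ $t$''. It therefore suffices to prove the decomposition $\Hr_N(\mathcal S)=\Hr_\mathrm{rad}(\mathcal S)\oplus\Hr_\mathrm{ang}(\mathcal S)$ with these two concrete descriptions of the summands.

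Next I would introduce the transverse averaging operator $Pf(t):=\frac1\pi\int_0^\pi f(t,\theta)\,d\theta$ on the dense class of test functions and write $f=Pf+(f-Pf)$, where $Pf$ is independent of $\theta$ and $f-Pf$ has zero transverse average. Expanding a test function in the cosine series $f(t,\theta)=\sum_{n\geq 0}a_n(t)\cos(n\theta)$ --- the expansion adapted to the Neumann condition on $\partial\mathcal S$, for which $Pf=a_0$ --- the Dirichlet energy diagonalizes over the modes:
\[
\|f\|_\nabla^2=\frac{1}{2\pi}\int_{\mathbb{R}}\left(\pi\,|a_0'(t)|^2+\frac{\pi}{2}\sum_{n\geq 1}\left(|a_n'(t)|^2+n^2|a_n(t)|^2\right)\right)dt,
\]
so that $\|f\|_\nabla^2=\|Pf\|_\nabla^2+\|f-Pf\|_\nabla^2$. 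This single identity does essentially all the work: it shows that $P$ and $\mathrm{Id}-P$ are norm-contractions, hence extend to bounded operators on $\Hr_N(\mathcal S)$, that the ranges of their extensions are orthogonal, and that the $n=0$ part is exactly the energy carried by the $\theta$-independent component while the $n\geq 1$ part is the energy of the mean-zero component. Passing to closures, $\Hr_\mathrm{rad}(\mathcal S)$ is the closure of $\{Pf\}$ and $\Hr_\mathrm{ang}(\mathcal S)$ that of $\{f-Pf\}$, they are orthogonal, and $f=Pf+(f-Pf)$ together with density of test functions yields $\Hr_N(\mathcal S)=\Hr_\mathrm{rad}(\mathcal S)\oplus\Hr_\mathrm{ang}(\mathcal S)$. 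Undoing $\Phi$ gives the proposition. (The orthogonality alone can also be seen directly: for $f$ radial and $g$ angular one integrates $\nabla f\cdot\nabla g=f'(t)\,\partial_t g(t,\theta)$ first in $\theta$ and uses $\partial_t\int_0^\pi g\,d\theta=0$.)

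\textbf{Main obstacle.} The step requiring the most care is the bookkeeping around the function classes and the boundary conditions: one must check that for a test function $f$ the averaged function $Pf$ (resp.\ $f-Pf$) genuinely lies in --- or is approximable within --- the class of smooth functions used to define $\Hr_\mathrm{rad}$ (resp.\ $\Hr_\mathrm{ang}$), including its behaviour as $t\to\pm\infty$, i.e.\ near $0$ and $\infty$ in $\mathbb{H}$, and that the cosine expansion together with its termwise differentiation is justified with suitable decay of the coefficients $a_n(t)$. The finite-Dirichlet-energy constraint is precisely what makes all of this go through (it controls both the $t$-decay and the summability in $n$), but it must be invoked cleanly rather than glossed over; modulo this, the argument is the routine consequence of the energy identity displayed above.
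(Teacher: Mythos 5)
Your proof is correct, and it rests on the same underlying decomposition as the paper's -- splitting $f$ into its semi-circular average $Pf$ and the remainder $f-Pf$, then completing -- but it reaches orthogonality by a different technical route. The paper argues directly in polar coordinates: for radial $g$, the gradient $\nabla g$ points in the radial direction with $\theta$-independent magnitude, so integrating $\nabla f\cdot\nabla g$ over each semi-circle reduces to $g'(r)\,\partial_r\!\int_0^\pi f(re^{i\theta})\,d\theta$, which vanishes when $f$ has zero semi-circle mean. (The paper phrases this more loosely in terms of the mean values of $\nabla f$ and $\nabla g$ being perpendicular, but the clean version is this polar computation.) You instead transport the problem to the strip by $\log$, expand in a cosine series, and read off the orthogonality from the mode-by-mode diagonalization of the Dirichlet energy. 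Your route is a bit longer but has the advantage of being manifestly rigorous and of exhibiting the full Fourier decomposition of the energy, from which the norm identity $\|f\|_\nabla^2=\|Pf\|_\nabla^2+\|f-Pf\|_\nabla^2$ falls out for free; the paper's is shorter but leans on a somewhat heuristic geometric phrasing. Both correctly conclude by density of the test-function decomposition in the completion, and your ``main obstacle'' paragraph correctly identifies where care is needed (behaviour near $0$ and $\infty$, justification of termwise differentiation), which is the same bookkeeping that the paper glosses over. In short: same decomposition, different (and slightly more robust) proof of orthogonality.
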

\begin{proof}
On the one hand, for any smooth $g$ that is contant on every semi-circle we know that $\nabla g(z)$ is orthogonal to the semi-circle of radius $|z|$, while and its modulus only depends on $|z|$; therefore its mean value on the semi-circle is vertical. On the other hand, for any smooth $f$ that has zero mean on every semi-circle the mean value of $\nabla f(z)$ on any semi-circle is horizontal. Since taking the Dirichlet inner product of $f$ and $g$ consists of summing the scalar product of these mean values, we see that $(f,g)_{\nabla}=0$. Now, $f\in \Hr_{\partial}(\mathbb H)$ can be written as the (orthogonal) sum $f=\left(f-g\right) +g$, where $g(r)=m_{\partial B(0,r)\cap \mathbb{H}}(f)$. Since $\Hr_N(\mathbb H)$ is defined as the Hilbert space completion of $\Hr_{\partial}(\mathbb H)$ with respect to the Dirichlet norm this concludes the proof.
\end{proof}
We now turn to the case of half-disks, which will be central in the proof of our main result. In the following statement we consider any positive $R$ and $D$ the semi-disk $R\mathbb{D}\cap\mathbb{H}$. On $D$ we let $\Hr_M(D)$ be the Sobolev space with free (resp. zero) boundary conditions on $[-R,R]$ (resp. $\partial B(0,R)\cap\mathbb{H}$). 
\begin{prop}[Circle-average decomposition]\label{OD2}
\hspace{0cm}\\
For any $0<r<R$, let $\Hr_\mathrm{c}(D)$ be the Hilbert space completion of the set of the $f\in \Hr_m(D)$ with zero mean on the semi-circle $\partial B(0,r)\cap \mathbb{H}$ and
$\Hr_{\xi}(D):=\left\lbrace t\xi_r, t\in \mathbb{R} \right\rbrace$, where 
\[
\xi_r(z)=-2\log(\max(r,|z|))+2\log R.
\]
Then $\Hr_M(D)=\Hr_\mathrm{c}(D)\oplus \Hr_{\xi}(D)$.
\end{prop}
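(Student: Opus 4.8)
The plan is to follow the scheme of the proof of Proposition~\ref{OD1}, in three steps: first check that $\xi_r$ really belongs to $\Hr_M(D)$; then establish the orthogonality $\Hr_\mathrm{c}(D)\perp\Hr_\xi(D)$ by an integration by parts, using that $\xi_r$ is constant on $B(0,r)\cap\mathbb{H}$ and harmonic on the half-annulus $A:=\big(B(0,R)\setminus\overline{B(0,r)}\big)\cap\mathbb{H}$; and finally show that the two spaces span $\Hr_M(D)$, exploiting that $\xi_r$ has nonzero mean on the semi-circle $\partial B(0,r)\cap\mathbb{H}$. The difference with the setting of Proposition~\ref{OD1} is that here the complementary space is one-dimensional with an explicit (but merely Lipschitz) generator whose gradient jumps across $\partial B(0,r)\cap\mathbb{H}$, so that the argument cannot be carried out entirely inside the pre-completion spaces and the passage to the completion needs a little extra care.

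For the first step: on $B(0,r)\cap\mathbb{H}$ the function $\xi_r$ equals the constant $2\log(R/r)$, on $A$ it equals $-2\log|z|+2\log R$ (in particular it vanishes on $\partial B(0,R)\cap\mathbb{H}$, as it should for the zero boundary condition there), and one computes $(\xi_r,\xi_r)_{\nabla}=\frac{1}{2\pi}\int_A|\nabla\xi_r|^2\,\lambda(dz)=2\log(R/r)<\infty$; mollifying the gradient jump across $\partial B(0,r)\cap\mathbb{H}$ yields smooth elements of $\Hr_\mathrm{m}(D)$ converging to $\xi_r$ for the Dirichlet norm, so $\xi_r\in\Hr_M(D)$. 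For the orthogonality, I would take $f\in\Hr_\mathrm{m}(D)$ with $m_{\partial B(0,r)\cap\mathbb{H}}(f)=0$; since $\nabla\xi_r\equiv0$ on $B(0,r)\cap\mathbb{H}$ and $\xi_r$ is harmonic on $A$, Green's identity gives
\[
(f,\xi_r)_{\nabla}=\frac{1}{2\pi}\int_A\nabla f\cdot\nabla\xi_r\,\lambda(dz)=\frac{1}{2\pi}\int_{\partial A}f\,\partial_n\xi_r\,\lambda_{\partial}(dz).
\]
On $\partial A$ there are three pieces: the outer semi-circle $\partial B(0,R)\cap\mathbb{H}$, where $f\equiv0$ because elements of $\Hr_\mathrm{m}(D)$ vanish near $\partial D\setminus[-R,R]$; the real segments $[-R,-r]\cup[r,R]$, where $\partial_n\xi_r=0$ because $\nabla\xi_r$ is radial, hence tangent to $\mathbb{R}$; and the inner semi-circle $\partial B(0,r)\cap\mathbb{H}$, where $\partial_n\xi_r$ is the constant $2/r$, so this last term equals $\frac{1}{2\pi}\cdot\frac{2}{r}\cdot\pi r\cdot m_{\partial B(0,r)\cap\mathbb{H}}(f)=0$. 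Hence $(f,\xi_r)_{\nabla}=0$ on a dense subset of $\Hr_\mathrm{c}(D)$, and by continuity of $(\,\cdot\,,\xi_r)_{\nabla}$ this gives $\Hr_\mathrm{c}(D)\perp\Hr_\xi(D)$.

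For the spanning step, note $m_{\partial B(0,r)\cap\mathbb{H}}(\xi_r)=2\log(R/r)\neq0$. The space $V:=\Hr_\mathrm{c}(D)\oplus\Hr_\xi(D)$ is closed (both summands are closed, the second finite-dimensional, and they are orthogonal by the previous step), so it suffices to show $V=\Hr_M(D)$. I would fix $\phi\in\Hr_\mathrm{m}(D)$ with $m_{\partial B(0,r)\cap\mathbb{H}}(\phi)=1$ and take $w\in\Hr_M(D)$ orthogonal to $V$; since $f-m_{\partial B(0,r)\cap\mathbb{H}}(f)\,\phi\in\Hr_\mathrm{c}(D)$ for every $f\in\Hr_\mathrm{m}(D)$, this forces $(w,f)_{\nabla}=m_{\partial B(0,r)\cap\mathbb{H}}(f)\,(w,\phi)_{\nabla}$ for all such $f$. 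If $(w,\phi)_{\nabla}=0$ then $w\perp\Hr_\mathrm{m}(D)$, hence $w=0$ by density; otherwise the semi-circle average extends from $\Hr_\mathrm{m}(D)$ to the continuous functional $(w,\,\cdot\,)_{\nabla}/(w,\phi)_{\nabla}$ on $\Hr_M(D)$, and evaluating it on $\xi_r$ --- identifying the extension with the genuine average by means of the uniformly convergent approximants from the first step --- gives $2\log(R/r)=(w,\xi_r)_{\nabla}/(w,\phi)_{\nabla}=0$ since $\xi_r\in V$, a contradiction. Thus $w=0$ and $V=\Hr_M(D)$, which is the asserted decomposition.

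The hard part will not be any single deep estimate but rather the completion bookkeeping behind the last step: because $\xi_r$ is not smooth one cannot simply split $f=(f-t\xi_r)+t\xi_r$ within the pre-completion spaces as in Proposition~\ref{OD1}, so one has to verify that the semi-circle average at the fixed radius $r\in(0,R)$ is Dirichlet-continuous on $\Hr_M(D)$ --- equivalently, that $\Hr_\mathrm{c}(D)$ is exactly the kernel of that average --- a fact which in the end rests on the finiteness of the logarithmic energy of the uniform probability measure on $\partial B(0,r)\cap\mathbb{H}$.
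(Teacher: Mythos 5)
Your proof is correct and follows the same overall strategy as the paper's (establish the key identity $(f,\xi_r)_\nabla=m_{\partial B(0,r)\cap\mathbb H}(f)$ for $f\in\Hr_\mathrm m(D)$, deduce the orthogonality, then argue spanning), but it differs in two worthwhile ways. First, you derive the integration-by-parts identity directly via Green's theorem on the half-annulus rather than quoting \cite[Subsection 6.1]{DS10}, which makes the argument self-contained; in doing so you also obtain the correct value $(\xi_r,\xi_r)_\nabla=2\log(R/r)$, whereas the paper's proof states $(\xi_r,\xi_r)_\nabla=2\log R$ and correspondingly uses $\lambda=m_{\rho_r,\partial}(f)/(2\log R)$ --- with that normalization $f-\lambda\xi_r$ would not have zero semi-circle mean, so this is evidently a typo, and the paper itself later quotes the variance as $2\log(R/r)$ in its discussion of circle averages. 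Second, your spanning step addresses head-on a point the paper passes over: the paper decomposes $f=(f-\lambda\xi_r)+\lambda\xi_r$ and simply writes ``taking the Hilbert space completion yields the result,'' but since $\xi_r\notin\Hr_\mathrm m(D)$ the piece $f-\lambda\xi_r$ is not in the pre-completion space underlying $\Hr_\mathrm c(D)$, so some argument is needed to see that $\Hr_\mathrm c(D)$ really equals $\xi_r^\perp$. Your orthogonal-complement argument --- showing any $w\perp(\Hr_\mathrm c(D)\oplus\Hr_\xi(D))$ vanishes, with the extension of the semi-circle average identified with the genuine average on $\xi_r$ via approximants that converge both in Dirichlet norm and uniformly near $\partial B(0,r)\cap\mathbb H$ --- is somewhat heavier than the paper's one-liner but it does supply the missing continuity/identification step. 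In short: same key lemma, more elementary derivation of it, a corrected constant, and a more careful treatment of the completion; the paper's version is shorter but leans on a citation and leaves a small gap exactly where you flag one.
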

\begin{proof}
Denote by $\rho_r=\frac{1}{|\partial B(0,r)\cap \mathbb{H}|}\mathsf{1}_{\partial B(0,r)\cap \mathbb{H}}$ the uniform (probability) measure on the semi-circle $\partial B(0,r)\cap\mathbb{H}$.
We know from \cite[Subsection 6.1]{DS10} that for any element $\phi$ of $H_m(D)$ we have $(\phi,\xi_r)_{\nabla}=(\phi,\rho_r)$, and $(\xi_r,\xi_r)_{\nabla}=2\log R$.\\
Therefore for any smooth $f$ with zero mean on $\partial B(0,r)\cap \mathbb{H}$ we have that $(f,\xi_r)_{\nabla}=(f,\rho_r)=0$, so any $f\in H_m(D)$ can be written as the (orthogonal) sum $f=\left(f-\lambda\xi_r\right) + \lambda\xi_r$, where $\lambda=\displaystyle\frac{m_{\rho_r,\partial}(f)}{2\log R}$. Taking the Hilbert space completion of this decomposition yields the result.
\end{proof}

\subsubsection{Green's kernel} On the domain $D$, consider one of the previous functional spaces $\Hr_a(D)$ (either $\Hr_\mathrm{s}(D)$, $\Hr_{\partial}(D)$ or $\Hr_\mathrm{m}(D)$) and $\Hr_A(D)$ to be its Hilbert space completion. We define the \textit{Green's kernel} $G_D$ associated to the functional space $\Hr_A(D)$ to be the unique symmetric kernel with the properties that: \begin{itemize}
    \item For any $f$ in $\Hr_a(D)$ and $x$ in $D$:
\begin{equation}\label{IPP}
\int_D (-\Delta) f(y)G(x,y)\lambda(dy)=2\pi \left(f(x)-m_{\partial D}(f)\right)-\int_{\partial D} G(x,y) \partial_{n} f(y)\lambda_{\partial}(dy) .
\end{equation} 
\item For any $x$ in $\overline{D}$, the map $z\mapsto G_D(x,z)$ satisfies the same property that the elements of $\Hr_A(D)$ (\textit{e.g.} zero boundary condition for the Dirichlet problem and $m_{\partial D}(G_D(x,\cdot))=0$ for Neumann boundary conditions).
\end{itemize}

Such a kernel indeed exists and is characterized (in the case of free boundary conditions) as the unique symmetric solution of the following Neumann problem: \\
For any $y\in D$, $x\mapsto G(x,y)$ has the properties of:
\begin{itemize}
\item harmonicity on $D\setminus\lbrace y\rbrace$,
\item harmonicity on $D$ of $x\mapsto G(x,y)+ \log |x-y|$,
\item $\partial_n G_D(x,y)=-\frac{2\pi}{|\partial D|}$ for $x\in\partial D$ ($0$ if the boundary is unbounded), where $\partial_n$ is the normal derivative,
\item mean zero on $\partial D$.
\end{itemize}
Note that since we work only in specific domains there are no boundary issues. Interestingly, this kernel can be made explicit in some cases:
\begin{align}
G_{\mathbb{D}}(x,y)&=-\log\left( |x-y||1-xy^*|\right) \quad\text{for the unit disk }\overline{\mathbb{D}}\\ 
G_{\mathbb{H}}(x,y)&=-\log \left(|x-y||x-y^*|\right)\quad\text{for the upper half-plane }\overline{\mathbb{H}} 
\end{align}
both with free boundary conditions.

In the same spirit, we introduce a larger set of Green's kernel by requiring it to have mean zero on $D$ or $\partial D$ under a different metric. To do so, we define for any $\rho\in\mathbb{P}_D$ 
\begin{equation}\label{def:green_rho}
G_D^{\rho}(x,y):=G_D(x,y)-m_{\rho}(G_D(x,\cdot))-m_{\rho}(G_D(\cdot,y))+\theta_{\rho}
\end{equation} 
with $\theta_{\rho}:=\iint_{D\times D} \rho(x)G_D(x,y)\rho(y)\lambda(dx)\lambda(dy)$ chosen so that $m_{\rho}(G_D^{\rho}(x,\cdot))=0$ for any $x$ in $D$. We may proceed in the same way for the boundary case and define 
\begin{equation}
G_{D,\partial}^{\rho}(x,y):=G_D(x,y)-m_{\rho,\partial}(G_D(x,\cdot))-m_{\rho,\partial}(G_D(\cdot,y))+\theta_{\rho,\delta}
\end{equation} 
with $\theta_{\rho,\delta}:=\iint_{L\times L} \rho(x)G_D(x,y)\rho(y)\lambda_\partial(dx)\lambda_\partial(dy)$.

\subsection{Probabilistic background: Gaussian Free Field and Gaussian Multiplicative Chaos}

\subsubsection{Gaussian Free Field}
Roughly speaking, the GFF is a $d$ time-dimensional analog of the Brownian Motion, which can be seen both as a random distribution over a domain $D$ and a Gaussian Hilbert space. 
Following the approach of Janson in \cite{Jan97}, we may define the \textit{Gaussian Free Field with Dirichlet, Neumann or mixed boundary conditions} as the Gaussian Hilbert space whose random variables are the $ (h,f)_{\nabla}$ for $f$ in $\Hr$, where $\Hr$ is one of the Hilbert spaces $\Hr(D)$, $\Hr_N(D)$ or $\Hr_M(D)$, with the property that these random variables are Gaussian with mean zero and covariance function given by cov$((h,f)_{\nabla},(h,g)_{\nabla})=(f,g)_{\nabla}$. It is important to notice that at this stage, in the case of free boundary conditions as well as in the case of the whole plane, the GFF is defined modulo an additive constant. Usually in order to set the value of this constant one further assumes that the field has zero mean on $\partial D$ in the free boundary case. Standard computations using Equation \eqref{IPP} then show that the GFF may be thought of as a Gaussian field with covariance kernel is given by $G_{D}$.

One may proceed in the same way and define more generally the GFF $h_{\rho}$ associated to some $\rho$ in $\mathbb{P}_D$ (resp. $\mathbb{P}_{D,\partial}$) as a Gaussian field whose covariance kernel is given by $G_D^\rho$ (resp. $G_{D,\partial}^\rho$); existence and properties of such fields are detailed for instance in \cite{S07} or \cite{DS10}. If we do so then $h_{\rho}$ can be thought of as a GFF on $D$ such that $(h,\rho)=0$. 

From invariance under conformal mapping of the Dirichlet inner product can be raised the property of invariance under conformal mapping of the GFF; similarly it is standard (see \cite{S07} for instance) that the field enjoys a sort of Markov property that allows it to be decomposed into independent Gaussian components, and that this field is highly non regular and lives in the Sobolev space with negative index $\Hr^{-1}$, that is the dual of the Sobolev space $\Hr$. Besides, a crucial property of the GFF is that this random distribution can give rise to a random measure on $D$, usually referred to as the Liouville Quantum Gravity measure. This random metric can formally be written under the form $e^{\gamma h(z)}\lambda(dz)$; however, since $h$ is a distribution and cannot be defined pointwise, we will use an approximation process to make this definition precise.

\subsubsection{Regularization of the GFF: circle averages}
Let $h_0$ be a GFF on $D$ (with one of the three boundary conditions).\\
For $z$ in $D$ or in a linear part $L$ of $\partial D$, we would like to define for $\varepsilon>0$ small enough $h_0^{\varepsilon}(z)$ its mean value on the circle/semi-circle $\partial B(z,\varepsilon)\cap D$. This random variable is actually well-defined, since this mean value can be written under the form $(h,\zeta_z^{\varepsilon})_{\nabla}$ for some $\zeta_z^{\varepsilon}$ in $H$ (see for instance \cite[Section 3]{DS10}). For the sake of completeness, we provide here the explicit construction in the case of a semi-circle in the the upper half-plane, since we will use it in the sequel.

Let $h_0$ be a GFF with free (resp. zero) boundary conditions on $[-R,R]$ (resp. $\partial B(0,R)\cap\mathbb{H}$). For $r<R$, let $\rho_{r}$ and $\xi_r$ be as in Proposition \ref{OD2}. Then by integration by parts we have $(h_0,\rho_r)=(h_0,\xi_r)_{\nabla}$, so we can define the semi-circle average of $h$ over $\partial B(0,r)\cap\mathbb{H}$. This random variable is therefore Gaussian with mean zero and variance $(\xi_r,\xi_r)_{\nabla}=2\log \frac{R}{r}$.

\subsubsection{Gaussian Multiplicative Chaos and Liouville Quantum Gravity}
We are now ready to define for $\gamma\in(0,2)$ the so-called \textit{Liouville Quantum Gravity} measures on $D$ and (a linear part of) $\partial D$. In the following statement we assume $L$ to be a linear part of $\partial D$ and let $h_0$ be a GFF with mixed boundary conditions.
\begin{theorem}\label{LQG}
Let $\gamma\in(0,2)$ and $h_0^{\varepsilon}(z)$ be the $\varepsilon$-circle average around $z$ if $B(z,\varepsilon)\subset D$, and the $\varepsilon$-semi-circle average around $z$ if $z$ is in $L$---of the field $h_0$.
Then the sequence of random measures on $D\times L$ defined by $\left(\varepsilon^{\gamma^2/2}e^{\gamma h_0^{\varepsilon}(z)}\lambda(dz), \varepsilon^{\gamma^2/4}e^{\frac{\gamma}{2} h_0^{\varepsilon}(z)}\lambda_{\partial}(dz)\right)$ converges almost surely in the sense of weak convergence of measures as $\varepsilon$ goes to zero. We denote their limit by $\mu_{h_0}(dz)=e^{\gamma h_0}\lambda(dz)$ and $\nu_{h_0}(dz)=e^{\frac{\gamma}{2} h_0}\lambda_{\partial}(dz)$. 
\end{theorem}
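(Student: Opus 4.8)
The plan is to run the by-now-classical martingale/convexity argument for Gaussian multiplicative chaos in the circle- and semi-circle-average regularisation, essentially following \cite{DS10}. Write $\mu_\varepsilon:=\varepsilon^{\gamma^2/2}e^{\gamma h_0^\varepsilon}\lambda$ and $\nu_\varepsilon:=\varepsilon^{\gamma^2/4}e^{\frac\gamma2 h_0^\varepsilon}\lambda_\partial$ for the regularised measures of the statement. First I would fix a nonnegative $f\in C_c(D)$ and a nonnegative $g\in C_c(L)$ and prove that $\varepsilon\mapsto\int_D f\,d\mu_\varepsilon$ and $\varepsilon\mapsto\int_L g\,d\nu_\varepsilon$ converge almost surely as $\varepsilon\downarrow 0$. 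Applying this simultaneously to a countable family $(f_k)$ dense in $C_c(D)$ and $(g_k)$ dense in $C_c(L)$, together with an almost sure uniform bound on $\mu_\varepsilon(K)$ and $\nu_\varepsilon(K')$ over compacts (which also comes out of the argument below), a standard density argument upgrades this to almost sure \emph{joint} weak convergence of the pair $(\mu_\varepsilon,\nu_\varepsilon)$ on $D\times L$; the passage from a dyadic sequence $\varepsilon=2^{-n}$ to the full continuum uses the almost sure joint continuity of $(z,\varepsilon)\mapsto h_0^\varepsilon(z)$ and of its semi-circle counterpart on $L$, which controls the ratio of the densities between consecutive scales.

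The heart of the almost sure convergence is an (approximate) martingale structure, and it is exactly what dictates the two deterministic prefactors $\varepsilon^{\gamma^2/2}$ and $\varepsilon^{\gamma^2/4}$. Iterating the circle-average decomposition of Proposition \ref{OD2} over a decreasing sequence of radii --- together with its analogue for Euclidean balls at interior points, i.e.\ the Markov property of the GFF --- one obtains, for a fixed point $z$ and $\varepsilon'<\varepsilon$ small enough, a decomposition $h_0^{\varepsilon'}(z)=h_0^\varepsilon(z)+\Delta_{\varepsilon,\varepsilon'}(z)$ in which the increment $\Delta_{\varepsilon,\varepsilon'}(z)$ is centred Gaussian of variance $\log(\varepsilon/\varepsilon')$ at an interior point and $2\log(\varepsilon/\varepsilon')$ at a point of $L$ --- the variance of a semi-circle average being doubled, as in the computation $(\xi_r,\xi_r)_\nabla=2\log(R/r)$ recalled above --- and is independent of the field coarser than scale $\varepsilon$. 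Since $\mathbb E[e^{t\Delta}]=e^{t^2\mathrm{Var}(\Delta)/2}$, the prefactors are tuned so that, with respect to the natural scale filtration, $\varepsilon\mapsto\int_D f\,d\mu_\varepsilon$ and $\varepsilon\mapsto\int_L g\,d\nu_\varepsilon$ are nonnegative martingales in the parameter $-\log\varepsilon$ (modulo the usual Green's-function correction, which differs from the exact Wick normalisation by an almost surely continuous, locally bounded factor and can be absorbed); the martingale convergence theorem then produces the almost sure limits, which we take as $\mu_{h_0}$ and $\nu_{h_0}$.

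It remains to check that the limiting measures are not almost surely zero, and this is where the subcriticality $\gamma\in(0,2)$ enters. For $\gamma<\sqrt2$ one establishes a uniform second-moment bound $\sup_\varepsilon\mathbb E\bigl[(\int_D f\,d\mu_\varepsilon)^2\bigr]<\infty$, the double integral $\iint f(z)f(w)e^{\gamma^2 G_D(z,w)}\lambda(dz)\lambda(dw)$ being finite precisely because the singularity $|z-w|^{-\gamma^2}$ is integrable for $\gamma^2<2$; uniform integrability then forces $\mathbb E[\mu_{h_0}(f)]=\lim_\varepsilon\mathbb E[\mu_\varepsilon(f)]=\int_D f(z)e^{\frac{\gamma^2}2 g_D(z)}\lambda(dz)>0$, where $g_D(z)=\lim_{\varepsilon\to0}\bigl(\mathrm{Var}\,h_0^\varepsilon(z)+\log\varepsilon\bigr)$ is the bounded regular part of the variance; the same computation with exponent $\gamma^2/4$ treats $\nu_\varepsilon$ for $\gamma<2$. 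For $\gamma\in[\sqrt2,2)$ in the bulk the $L^2$ bound is replaced by the standard refinements of Kahane's theory (Kahane's convexity inequality, or a truncated second-moment / thick-points estimate), at which point the bulk half of the statement is exactly that of \cite{DS10} and the boundary half its boundary analogue, as used in \cite{HRV16}. Finally the case of \emph{mixed} boundary conditions reduces to those reference cases by absolute continuity: near an interior point a GFF with mixed boundary conditions differs from a zero-boundary (or whole-plane) GFF by an almost surely harmonic, hence continuous, field, and near a point of $L$ from a free-boundary half-plane GFF likewise; such a continuous shift multiplies the regularised measures by an almost surely continuous density bounded away from $0$ and $\infty$ on compacts, and so affects neither the convergence nor the non-degeneracy.

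The step I expect to be the genuine obstacle is precisely this non-degeneracy throughout the whole subcritical window --- especially for the boundary measure up to $\gamma=2$ --- where the clean $L^2$ argument (valid only for $\gamma<\sqrt2$) must give way to the more delicate truncated second-moment or Kahane-convexity comparison. A secondary technical nuisance is keeping track of the exact filtration and the Green's-function correction so that the martingale identity is genuinely exact rather than heuristic; everything else --- the promotion to weak convergence along a common countable dense family, the dyadic-versus-continuum comparison, and the reduction of mixed boundary conditions --- is routine once the variance computations of Propositions \ref{OD1}--\ref{OD2} are in hand.
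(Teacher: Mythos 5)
The paper does not prove this statement: it explicitly attributes it to \cite{DS10} (with related treatments in \cite{RV16,K85,HRV16}), so there is no internal proof to compare against. I therefore review your sketch on its own merits.

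The load-bearing step of your sketch --- that $\varepsilon\mapsto\int_D f\,d\mu_\varepsilon$ and $\varepsilon\mapsto\int_L g\,d\nu_\varepsilon$ are, modulo an absorbable continuous correction, nonnegative martingales in $-\log\varepsilon$ with respect to the natural scale filtration --- does not hold for the circle-average regularisation. The increment decomposition you write down is correct pointwise: for a \emph{fixed} $z$, the increment $h_0^{\varepsilon'}(z)-h_0^{\varepsilon}(z)=(h_0,\xi^z_{\varepsilon'}-\xi^z_{\varepsilon})_\nabla$ is supported in $B(z,\varepsilon)$ and is independent of the field coarser than scale $\varepsilon$ \emph{around} $z$, precisely because $\xi^z_{\varepsilon'}-\xi^z_{\varepsilon}$ vanishes outside $B(z,\varepsilon)$ and is $\nabla$-orthogonal to any $\xi^w_r$ whose sphere $\partial B(w,r)$ misses $B(z,\varepsilon)$. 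But once you integrate over $z$ against $f$, the filtration has to be taken simultaneously over all centres, say $\mathcal F_\varepsilon:=\sigma\bigl(h_0^r(w):w,\,r\ge\varepsilon\bigr)$, and then the increment at $z$ is \emph{not} independent of $\mathcal F_\varepsilon$: whenever $\bigl||z-w|-r\bigr|<\varepsilon$ the sphere $\partial B(w,r)$ enters $B(z,\varepsilon)$ and the orthogonality fails, and such $(w,r)$ with $r\ge\varepsilon$ always exist. Consequently $\mathbb E[\mu_{\varepsilon'}(f)\mid\mathcal F_\varepsilon]\neq\mu_\varepsilon(f)$, and the martingale convergence theorem is simply not available. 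Note also that this has nothing to do with the Green's-function correction $g_D$, which you propose to absorb: for a whole-plane GFF, where that correction is identically zero, the same obstruction appears. The obstruction is stochastic, not deterministic.

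This is a genuine gap, not a technical nuisance to be deferred, and it is exactly why the references do \emph{not} run a martingale argument on the circle-average approximation. Kahane's martingale construction of GMC uses a genuinely different decomposition of the covariance --- a sum of independent smooth Gaussian fields (a white-noise/hierarchical decomposition) --- for which the martingale property is exact; the circle-average measures are then identified with the resulting chaos a posteriori, via Kahane's convexity inequality, Shamov's uniqueness theorem, or Berestycki's elementary argument. Alternatively one proves an $L^2$ (Cauchy) estimate directly along a dyadic sequence, which gives convergence in probability for $\gamma<\sqrt2$, and handles $\gamma\in[\sqrt2,2)$ by a thick-point truncation. You do list these tools in your final paragraph, but you present them as a ``refinement'' of the martingale argument rather than as its replacement; they need to be the main engine of the proof. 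One further small slip: the $L^2$ threshold for the boundary measure is also $\gamma<\sqrt2$, since the one-dimensional singularity is $|x-y|^{-\gamma^2/2}$ (integrable iff $\gamma^2<2$), not $\gamma<2$ as written just before --- you implicitly correct this in the following sentence.
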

If $D$ is a domain with non-linear boundary but can be mapped conformally to a domain $\tilde{D}$ with linear boundary, we may define its boundary measure to be the pushforward of the boundary measure of $\tilde{D}$; it is a standard result (see the proof of \cite[Theorem 6.1]{DS10} for instance) that this definition is actually consistent with the change of variable formula \eqref{CCD2}.

The proof of this classical result can be found for instance in \cite{DS10}, and can also be extended to the case where the field is shifted by a deterministic constant. We will consider in the sequel Liouville Quantum Gravity measures that are defined from fields of the form
\begin{equation}\label{GFF_L}
h_{L(\rho)}=h_{\rho}+ Q m_{\rho}(G_D(z,\cdot))+H_{\rho}(z)
\end{equation} where 
\begin{equation}
 H_{\rho}(z)=\sum_{i=1}^n \alpha_i G_D^{\rho}(z,z_i) + \sum_{j=1}^r \frac{\beta_j}{2} G_D^{\rho}(z,s_j)
\end{equation}
and with pairs $(\bm\alpha,\bm z):=(\alpha_1,z_1),...,(\alpha_n,z_n)$ in $\mathbb{R}\times D$ and $(\bm \beta,\bm s):=(\beta_1,s_1),...,(\beta_r,s_r)$ in $\mathbb{R}\times \partial D$. Here the background measure is $\rho\in\mathbb{P}_{D}$, but the same remains true for $\rho\in\mathbb{P}_{D,\partial}$. For $\gamma\in(0,2)$, $\mu_{h_{L(\rho)}}=e^{\gamma h_{L(\rho)}}d\lambda$ and $\nu_{h_L(\rho)}=e^{\frac{\gamma}{2} h_{L(\rho)}}d\lambda_{\partial}$ will be referred to as \textit{the Gaussian Multiplicative Chaos}(GMC in the sequel) associated to $(\bm\alpha,\bm z)$ and $(\bm\beta,\bm s)$ in the rest of this document. These random measures are defined using a similar limiting procedure as in Theorem \ref{LQG}; see for instance \cite{RV16} for a justification of the construction of such an object, first introduced by Kahane in \cite{K85}, thanks to the theory of GMC.

\section{A review of the two perspectives}\label{two_persp}

In this third section, we review the two definitions of the unit boundary length quantum disk provided in \cite{HRV16} and \cite{DMS14} and shed light on some of their properties that will be useful for our purpose. In particular we will highlight a limiting procedure leading to the unit boundary length quantum disk with three marked points which is the starting point to showing the equivalence between these two perspectives.

\subsection{The unit boundary length quantum disk from the path integral}\label{def_HRV}
According to what has been said in the first section, the definition provided by Huang, Rhodes and Vargas in \cite{HRV16} comes from an interpretation of the path integral approach thanks to the introduction of probabilistic objects. 

\subsubsection{The partition function of the path integral from the probabilistic viewpoint} The starting point in this first approach is to provide a rigorous meaning to Liouville action functional that was introduced in Equations \eqref{Liouville_action} and \eqref{law_field}:
\begin{equation*}
    \expect{F(\phi)}=\frac{1}{\mathcal{Z}}\int F(X)e^{-S(X,g)}dX
\end{equation*}
with
\begin{equation*}
    S(X,g)=\frac{1}{4\pi}\int_{\mathbb{D}} (|\nabla_g X|^2 + Q R_g X + 4\pi\mu e^{\gamma X}) d\lambda_{g} + \int_{\partial \mathbb{D}} (Q K_g X + 2\pi\mu_{\partial}e^{\frac{\gamma}{2} X})d\lambda_{\partial g}.
\end{equation*}
The geometric term $e^{-\frac{1}{4\pi}\int_{\mathbb{D}} |\nabla_g X|^2d\lambda_{g}}$ in the action functional corresponds to the partition function of the two-dimensional GFF: it is therefore natural to interpret the measure $e^{-\frac{1}{4\pi}\int_{\mathbb{D}} |\nabla_g X|^2d\lambda_{g}}dX$ as the probability measure with respect to some GFF. However the partition function of the theory is not well-defined and requires the insertion of \textit{logarithmic singularities} to the field (which amounts to adding \textit{conical singularities} to the underlying quantum surface). This is done as follows.

Let $(\mu,\mu_{\partial})\in\mathbb{R^+}\times\mathbb{R^+}\setminus\lbrace(0,0)\rbrace$ be a pair of cosmological constants, and similarly $(\bm\alpha,\bm z)$ be in $\mathbb{R}\times \mathbb{D}$ and $(\bm\beta,\bm s)$ be in $\mathbb{R}\times \partial \mathbb{D}$. When considering such pairs we set \begin{equation}\label{def:s}
    s:=\sum_{i=1}^n \alpha_i  + \sum_{j=1}^r \frac{\beta_j}{2} - Q.
\end{equation}
By considering functionals to which we have added \lq\lq vertex operators" $V_{\alpha}(x):=e^{\alpha\phi(x)}$, we introduce the partition functions for the Liouville field in the unit disk, which take the form: \begin{equation}
    \hspace{-0.5cm}\Pi_{\gamma,\mu,\mu_{\partial}}^{(\bm z,\bm{\alpha}),(\bm s,\bm{\beta})}(F):= \int_{\mathbb{R}} e^{sc}\mathbb{E}\left[F(h_{L(\rho_0)}+c)\exp\left(-\mu e^{\gamma c}\mu_{h_{L(\rho_0)}}(\mathbb{D})-\mu_{\partial} e^{\frac{\gamma}2 c}\nu_{h_{L(\rho_0)}}(\partial \mathbb{D})\right)\right]dc,
\end{equation}
where we have considered the field $h_L(\rho_0)$ to be given by Equation \eqref{GFF_L} in the special case where the background measure is $\rho_0$, the uniform probability measure on the boundary of the disk. Like in the introduction, the normalization constant is defined by 
\begin{equation}
\mathcal{Z}:=\Pi_{\gamma,\mu,\mu_{\partial}}^{(\bm z,\bm{\alpha}),(\bm s,\bm{\beta})}(1)=\int_{\mathbb{R}} e^{sc}\mathbb{E}\left[\exp\left(-\mu e^{\gamma c}\mu_{h_{L(\rho)}}(\mathbb{D})-\mu_{\partial} e^{\frac{\gamma}2 c}\nu_{h_{L(\rho)}}(\partial \mathbb{D})\right)\right]dc.
\end{equation}
It is shown by the authors that the above quantity is indeed well-defined provided that the \textit{Seiberg bounds} are satisfied:\begin{itemize}
    \item if $\mu>0$: 
\begin{equation}\label{Seiberg}
        s>0 \text{;\hspace{0.15cm}for any } 1\leq i\leq n, \text{ }\alpha_i<Q \text{;\hspace{0.15cm} for any } 1\leq j \leq r, \text{ }\beta_j<Q.
\end{equation}
\item if $\mu=0$:
\begin{equation}
        s>0 \text{;\hspace{0.15cm} for any } 1\leq j \leq r, \text{ }\beta_j<Q.
\end{equation}
\end{itemize}
These bounds correspond to the facts that the log-singularities have to be integrable ($\alpha,\beta<Q$) and that there is no divergence in the zero mode ($s>0$), that is when the variable $c$ in the partition function diverges to $-\infty$. Geometrically this last bound corresponds to the fact that for a disk to admit a Riemannian metric whose (both Gaussian and geodesic) curvature is constant and negative, one needs to add certain conical singularities in order to overcome the Gauss-Bonnet theorem.

\subsubsection{Law of the Liouville field and measures} Under these assumptions, the Liouville field is the random field whose law is given by the normalization of the previous expression: 
\begin{equation}
    \mathbb{E}_{\gamma,\mu,\mu_{\partial}}^{(\bm z,\bm{\alpha}),(\bm s,\bm{\beta})}\left[F(\phi)\right]:=\frac{\Pi_{\gamma,\mu,\mu_{\partial}}^{(\bm z,\bm{\alpha}),(\bm s,\bm{\beta})}(F)}{\Pi_{\gamma,\mu,\mu_{\partial}}^{(\bm z,\bm{\alpha}),(\bm s,\bm{\beta})}(1)}.
\end{equation} Following a process similar to the one described in the introduction, this field then gives rise to a pair of random measures $(M,M^{\partial})$ on $(\mathbb{D},\partial\mathbb{D})$ (corresponding to GMC measures associated to $\phi$), whose law is described by an equation of the form (see \cite[Subsection 3.6]{HRV16}):
\[\mathbb{E}_{\gamma,\mu,\mu_{\partial}}^{(\bm z,\bm{\alpha}),(\bm s,\bm{\beta})}\left[F(M,M^{\partial})\right]\propto\]
\begin{equation}
\int_{0}^{+\infty} y^{\frac{2s}{\gamma}-1}\mathbb{E}\left[F(y^2\frac{\mu_{h_L}}{\nu_{h_L}(\partial \mathbb{D})^{2}}, y\frac{\nu_{h_L}}{\nu_{h_L}(\partial \mathbb{D})})\exp(-\mu y^2\frac{\mu_{h_L}(\mathbb{D})}{\nu_{h_L}(\partial\mathbb{D})^{2}}-\mu_{\partial} y)\nu_{h_L}(\partial \mathbb{D})^{-\frac{2s}{\gamma}}\right]dy.
\end{equation}

In particular the following properties hold: 
\begin{itemize}
\item if $\mu=0$ (so $\mu_{\partial}>0$), then one can define the law of the \textit{unit boundary length quantum disk} with log-singularities $(\bm\alpha,\bm z)$ and $(\bm\beta,\bm s)$ as the law of the pair $(M,M^{\partial})$ conditioned on $M^\partial(\partial\mathbb{D})=1$. We denote it by $(\mu^{UBL}_{HRV},\nu^{UBL}_{HRV})$ (we omit the dependence in $(\bm\alpha,\bm z)$ and $(\bm\beta,\bm s)$ to keep the notations as light as possible). 
Conditioning on the value of the boundary length is somehow tantamount to fixing the value of the $y$-variables in the integral to be equal to $1$\footnote{To see this one can introduce the event that $M_\partial(\partial \mathbb D)\in(1-\varepsilon,1+\varepsilon)$ which corresponds to taking $y$ between $1-\varepsilon$ and $1+\varepsilon$; one can condition on this event and the let $\varepsilon$ go to zero to get the expression \eqref{UBL}.}; as a consequence the law of $(\mu^{UBL}_{HRV},\nu^{UBL}_{HRV})$ can be more explicitly described by \begin{equation}\label{UBL}
\mathbb{E}_{\gamma,\mu,\mu_{\partial}}^{(\bm z,\bm{\alpha}),(\bm s,\bm{\beta})}\left[F(\mu^{UBL}_{HRV},\nu^{UBL}_{HRV})\right]=
\frac{\mathbb{E}\left[F(\frac{\mu_{h_L}}{\nu_{h_L}(\partial \mathbb{D})^2}, \frac{\nu_{h_L}}{\nu_{h_L}(\partial \mathbb{D})})\nu_{h_L}(\partial\mathbb{D})^{-\frac{2s}{\gamma}}\right]}{\mathbb{E}\left[\nu_{h_L}(\partial \mathbb{D})^{-\frac{2s}{\gamma}}\right]}.
\end{equation}
\item if $\mu_{\partial}=0$ (so $\mu>0$), we can proceed in the same way to define the law of the \textit{unit area quantum disk} with log-singularities $(\bm\alpha,\bm z)$ and $(\bm\beta,\bm s)$: this simply corresponds to the law of the pair $(M,M^{\partial})$ when conditioned on $M(\mathbb{D})=1$. We denote it by $(\mu^{UA}_{HRV},\nu^{UA}_{HRV})$, which admits the alternative definition:
\begin{equation}\label{UA}
\mathbb{E}_{\gamma,\mu,\mu_{\partial}}^{(\bm z,\bm{\alpha}),(\bm s,\bm{\beta})}\left[F(\mu^{UA}_{HRV},\nu^{UA}_{HRV})\right]=
\frac{\mathbb{E}\left[F(\frac{\mu_{h_L}}{\mu_{h_L}(\mathbb{D})}, \frac{\nu_{h_L}}{\mu_{h_L}(\mathbb{D})^{1/2}})\mu_{h_L}(\mathbb{D})^{-\frac{s}{\gamma}}\right]}{\mathbb{E}\left[\mu_{h_L}(\mathbb{D})^{-\frac{s}{\gamma}}\right]}.
\end{equation}
\end{itemize} 
As explained in \cite{HRV16}, these measures can actually be defined under the following weaker assumptions:
\begin{equation}
 -s<\min(\frac{2}{\gamma},2(Q-\alpha_i),Q-\beta_j) \text{ and for any } 1\leq i\leq n \text{ and } 1\leq j \leq r, \text{ }\alpha_i<Q \text{ and } \beta_j<Q
\end{equation} for the unit area quantum disk and 
\begin{equation}
 -s<\min(\frac{2}{\gamma},Q-\beta_j) \text{ and for any } 1\leq j \leq r, \text{ } \beta_j<Q
\end{equation} for the unit boundary length quantum disk.
When we consider another domain $\tilde{D}$ which is conformally equivalent to $\mathbb{D}$ under $\psi$, we define the Liouville measures to be the pushforwards of the corresponding measures on $\mathbb{D}$ by $\psi$. It is worth noting that this definition is consistent when we consider M\"obius transforms of the disk, as stated in \cite[Theorem 3.5]{HRV16}, as well with the rule of change of domain \eqref{CCD2} (see \cite[Proposition 3.7]{HRV16}).

There is one specific case which deserves special attention, and which is the one that we will consider in the sequel: it is given by taking three $\gamma$-singularities located on the boundary of the disk, \textit{e.g.} we consider three insertion points $(\gamma,1)$, $(\gamma,i)$ and $(\gamma,-1)$.

It is also interesting to note that thanks to similar constructions, it is possible to define the unit area quantum surface for the sphere \cite{DKRV16} and the torus \cite{DRV16}.

In the next subsection we provide some properties of these Liouville measures, which are direct consequences of the definitions given.

\subsubsection{First properties of the Liouville measure}
To begin with, we are interested in the change of coordinates associated to a conformal change of domain which would preserve the law of the pair of measures that define the law of the unit boundary length quantum disk. We provide the following proposition as a reminder of Equation \eqref{CCD2}. This statement follows from \cite[Proposition 3.7]{HRV16}.
\begin{prop}[Conformal change of domain]\label{CCD}
\hspace{0cm}\\
Assume that $h_L(\rho)$ is given by Equation \eqref{GFF_L}, and that $\psi:\tilde{D}\rightarrow \mathbb{D}$ is a conformal change of domain. Then the Liouville measure with log-singularities $(\bm\alpha,\bm z)$ and $(\bm\beta,\bm s)$ and background metric $|\psi'|^2\rho\circ\psi$ on $\tilde{D}$ has same law as the GMC measure on $\tilde D$ defined with the GFF 
\[
h_L(\rho)\circ\psi+Q\log |\psi'|.
\]
\end{prop}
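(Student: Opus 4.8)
The plan is to reduce the statement to the change-of-variables formula \eqref{CCD2} for GMC measures together with the explicit decomposition \eqref{GFF_L} of the Liouville field, exploiting the conformal invariance of the Dirichlet inner product. First I would recall that the field $h_L(\rho)$ splits as $h_\rho + Q\, m_\rho(G_D(z,\cdot)) + H_\rho(z)$, where $h_\rho$ is a GFF with covariance $G_D^\rho$ and $H_\rho$ is the deterministic sum of $G_D^\rho$-singularities at the insertion points. The key observation is that each of these three pieces transforms in a controlled way under the conformal map $\psi\colon\tilde D\to\mathbb D$: the GFF $h_\rho$ transforms into a GFF on $\tilde D$ whose covariance is the Green's kernel normalized with respect to the pushed-forward background measure $|\psi'|^2\,\rho\circ\psi$ (this is exactly the conformal invariance of the GFF stated in the excerpt, combined with the invariance of the Dirichlet inner product), and the deterministic singular part $H_\rho$ together with the $Q\,m_\rho(G_D(z,\cdot))$ term transforms, after composition with $\psi$, into the corresponding deterministic part on $\tilde D$ plus exactly the discrepancy $Q\log|\psi'|$ that appears in \eqref{CCD2}.

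The main step is therefore a bookkeeping computation tracking how the Green's kernels $G_D$ and $G_D^\rho$ behave under $\psi$. Concretely I would write $G_{\mathbb D}(\psi(x),\psi(y)) = G_{\tilde D}(x,y) - (\text{harmonic correction in }x) - (\text{harmonic correction in }y)$, where the corrections come from the conformal factor $\log|\psi'|$, using the characterization of $G_D$ via its harmonicity properties and the fact that $G_D(x,y)+\log|x-y|$ is harmonic; this is the standard identity underlying \cite[Proposition 2.1]{DS10}. Feeding this into the definition \eqref{def:green_rho} of $G_D^\rho$ and into the singular part $H_\rho$, the harmonic corrections get absorbed: the terms $\alpha_i G_{\mathbb D}^{\rho}(\psi(z),\psi(z_i))$ and $\tfrac{\beta_j}{2} G_{\mathbb D}^{\rho}(\psi(z),\psi(s_j))$ become $\alpha_i G_{\tilde D}^{|\psi'|^2\rho\circ\psi}(z,\tilde z_i) + \tfrac{\beta_j}{2}G_{\tilde D}^{|\psi'|^2\rho\circ\psi}(z,\tilde s_j)$ plus an explicit multiple of $\log|\psi'(z)|$, while the term $Q\,m_\rho(G_{\mathbb D}(\psi(z),\cdot))$ accounts for the remaining $Q\log|\psi'|$ after a change of variables in the integral defining $m_\rho$ against the pushforward measure $|\psi'|^2\rho\circ\psi$. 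Once these identities are in place, one checks that the law of $h_L(\rho)\circ\psi + Q\log|\psi'|$ coincides with the law of $h_{L(|\psi'|^2\rho\circ\psi)}$ on $\tilde D$ as constructed in \eqref{GFF_L}, and then the equality of the resulting GMC measures follows by applying the change-of-variables property recorded after Theorem \ref{LQG}.

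I would organize this as: (i) state the conformal transformation rule for $G_D$ and deduce the rule for $G_D^\rho$ and $G_{D,\partial}^\rho$; (ii) verify the three summands of \eqref{GFF_L} transform as claimed, keeping careful track of the $\log|\psi'|$ contributions and checking they sum to exactly $Q\log|\psi'|$ (here the identity $Q=\frac\gamma2+\frac2\gamma$ plays no role, but the arithmetic of how $Q\,m_\rho(G_D)$ and the singular terms combine must be done honestly); (iii) conclude that $h_L(\rho)\circ\psi+Q\log|\psi'|$ and $h_{L(|\psi'|^2\rho\circ\psi)}$ have the same law as random distributions on $\tilde D$; (iv) invoke the GMC change-of-variables consistency to transfer the equality from the fields to the pair of measures $(\mu,\nu)$, using that the boundary measure on a domain with non-linear boundary is defined precisely as the pushforward.

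The main obstacle I expect is step (ii): the normalization constants $\theta_\rho$ and the mean-zero corrections $m_\rho(G_D(z,\cdot))$ in \eqref{def:green_rho} are background-measure dependent, so under $\psi$ one is comparing objects normalized against $\rho$ on $\mathbb D$ versus $|\psi'|^2\rho\circ\psi$ on $\tilde D$; one must show that all the $\rho$-dependent additive constants cancel (they are deterministic and absorbed into the additive-constant ambiguity that is fixed consistently on both sides), and that the only net effect of the change of variables is the single $Q\log|\psi'|$ shift. This is essentially the content of \cite[Proposition 3.7]{HRV16}, so in the write-up I would lean on that reference for the delicate cancellations rather than reprove them from scratch, and present the argument as an unpacking of how \eqref{GFF_L} interacts with the already-established change-of-domain formula.
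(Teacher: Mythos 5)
The paper gives no proof of this proposition at all: it is stated with the remark ``This statement follows from [HRV16, Proposition 3.7]'' and nothing more. Your proposal is therefore strictly more detailed than what the paper records. The route you sketch --- decompose $h_{L(\rho)}$ into its GFF piece $h_\rho$, the $Q\,m_\rho(G_D(z,\cdot))$ term, and the singular part $H_\rho$; track each under $\psi$ via the transformation rule for $G_D$ and hence $G_D^\rho$; verify the $\log|\psi'|$ contributions assemble into exactly $Q\log|\psi'|$; then pass from equality in law of fields to equality of the GMC pair via the change-of-variable formula --- is exactly the argument underlying [HRV16, Proposition 3.7], and it is the natural one given Equation \eqref{GFF_L}. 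You also correctly flag the delicate point, namely that the $\rho$-dependent normalizations ($\theta_\rho$ and the $m_\rho$ corrections) are deterministic constants that must be shown to cancel against the corresponding constants computed on $\tilde D$ with respect to $|\psi'|^2\rho\circ\psi$; this is where the actual bookkeeping lives. One small caveat: you remark that these constants are ``absorbed into the additive-constant ambiguity,'' but note that $h_{L(\rho)}$ is pinned (zero mean against $\rho$), so the cancellation must actually be exact rather than modulo constants --- the equality of laws you want is of the pinned fields, and it is this pinned equality that the GMC change-of-variables then turns into equality of the normalized measures. Since you ultimately defer to [HRV16, Proposition 3.7] for that computation, just as the paper does, the proposals are aligned; your write-up simply makes explicit the structure that the paper leaves implicit.
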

Again we stress that this rule for changing domains is consistent with M\"obius transforms of the disk (see \cite[Proposition 3.7]{HRV16}).

The definition provided above for the unit boundary length quantum disk has been done in terms of the GMC determined by $h_{L(\rho_0)}$ under the weighted probability measure  \\
$d\mathbb{P}_{\rho_0}:=\frac{\nu_{h_L(\rho_0)}(\partial D)^{-2s/\gamma}}{\expect{\nu_{h_L(\rho_0)}(\partial D)^{-2s/\gamma}}}d\mathbb{P}$, where $\rho_0$ was the uniform probability measure on the disk.
The following statement shows that the choice made to define $h_{L(\rho)}$ is actually the good one when considering a change of background measure: 
\begin{prop}[Change of background measure]\label{change}
\hspace{0.5cm}\\
The random variables $h_{L(\rho)}-\frac{2}{\gamma} \log \nu_{h_L(\rho)}(\partial\mathbb{D})$ under the weighted probability measures $d\mathbb{P}_{\rho}=\frac{\nu_{h_L(\rho)}(\partial\mathbb{D})^{- 2s/\gamma}}{\expect{\nu_{h_L(\rho)}(\partial \mathbb{D})^{-2s/\gamma}}}d\mathbb{P}$ 
 have same law for any $\rho$ in $\mathbb{P}_{\mathbb{D}}$, provided that $\expect{\nu_{h_L(\rho)}(\partial\mathbb{D})^{-2s/\gamma}}<\infty$.\\
\end{prop}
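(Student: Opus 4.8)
The plan is to reduce the statement to a direct computation using the explicit relation between the Green's kernels $G_D^\rho$ for different background measures $\rho$, together with the Cameron--Martin/Girsanov theorem for the GFF. First I would recall the structure of $h_{L(\rho)}$ from Equation \eqref{GFF_L}: it is $h_\rho + Q\, m_\rho(G_D(z,\cdot)) + H_\rho(z)$, where $h_\rho$ has covariance $G_D^\rho$. Comparing the definitions \eqref{def:green_rho} of $G_D^\rho$ for two background measures $\rho_1,\rho_2\in\mathbb P_{\mathbb D}$, one sees that $G_D^{\rho_1}(x,y) - G_D^{\rho_2}(x,y)$ is a sum of terms depending on $x$ alone plus terms depending on $y$ alone plus a constant. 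At the level of the associated Gaussian fields this means that $h_{\rho_1}$ and $h_{\rho_2} + \Delta(z)$ have the same covariance, where $\Delta(z) := m_{\rho_2}(G_D(z,\cdot)) - m_{\rho_1}(G_D(z,\cdot))$ is a deterministic harmonic function, \emph{up to} the randomness carried by the additive-constant mode — that is, $h_{\rho_1}$ and $h_{\rho_2}$ differ by a deterministic shift plus an independent recentering coming from the fact that $m_{\rho_1}(h_{\rho_1}) = 0$ while $m_{\rho_1}(h_{\rho_2}) \neq 0$ in general. The cleanest route is to note that $h_{\rho_1} \overset{d}{=} h_{\rho_2} + \Delta - m_{\rho_1}(h_{\rho_2} + \Delta)$ as random distributions, or to simply work with the field modulo constants and track the constant separately.

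The key algebraic point is that when one assembles the full Liouville field $h_{L(\rho)}$, the change of background measure produces only a \emph{global additive constant} shift — call it $C_\rho$, a specific number built from $Q$, the $\alpha_i$, the $\beta_j$, and the $m_\rho$-integrals of $G_D$ against the insertion points — plus the tautological recentering. Concretely I expect $h_{L(\rho_1)} = h_{L(\rho_2)} + C_{\rho_1,\rho_2} + (\text{recentering})$ in distribution. Now I would apply this to both objects appearing in the Proposition: the GMC measure $\nu_{h_{L(\rho)}}(\partial\mathbb D)$ scales like $e^{\frac{\gamma}{2}(C + \text{recentering})}$ times $\nu_{h_{L(\rho')}}(\partial\mathbb D)$, and crucially the field $h_{L(\rho)} - \frac{2}{\gamma}\log\nu_{h_{L(\rho)}}(\partial\mathbb D)$ is \emph{invariant} under adding any constant to $h_{L(\rho)}$: if $h \mapsto h + a$ then $\nu_h(\partial\mathbb D) \mapsto e^{\gamma a/2}\nu_h(\partial\mathbb D)$, so $\frac{2}{\gamma}\log\nu_h(\partial\mathbb D) \mapsto \frac{2}{\gamma}\log\nu_h(\partial\mathbb D) + a$, and the difference is unchanged. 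Hence the deterministic constant $C_{\rho_1,\rho_2}$ drops out entirely. What remains is to handle the recentering term and — this is the real content — to check that the reweighting by $\nu_{h_{L(\rho)}}(\partial\mathbb D)^{-2s/\gamma}$ exactly compensates for the Radon--Nikodym derivative produced by the Girsanov shift $\Delta$, so that the law of the recentered field under $d\mathbb P_\rho$ is genuinely $\rho$-independent.

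I would carry this out in the following order. Step 1: fix two background measures $\rho_1, \rho_2$ and write down the precise distributional identity relating $h_{\rho_1}$ and $h_{\rho_2}$, identifying the deterministic harmonic function $\Delta$ and the constant arising from recentering; here one uses that the Cameron--Martin space contains $\Delta$ (it is bounded and harmonic on the relevant domain, hence in $\Hr$) so Girsanov applies. Step 2: assemble $h_{L(\rho_1)}$ versus $h_{L(\rho_2)}$, collecting all $z$-independent contributions into a single constant $C$ and verifying that the remaining $z$-dependent discrepancy vanishes by the defining choice $Q\, m_\rho(G_D(z,\cdot))$ in \eqref{GFF_L} — this is exactly why that term is there. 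Step 3: observe the scaling invariance of $h - \frac{2}{\gamma}\log\nu_h(\partial\mathbb D)$ under global constant shifts to kill $C$. Step 4: compute the Radon--Nikodym derivative from Girsanov, which will be of the form $\exp\big((h_{\rho_2}, \Delta)_\nabla - \tfrac12\|\Delta\|_\nabla^2\big)$, express this in terms of boundary data, and show that once multiplied by the weight $\nu_{h_L}(\partial\mathbb D)^{-2s/\gamma}$ and renormalized, the $\rho$-dependence cancels.

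The main obstacle I anticipate is Step 4: bookkeeping the Girsanov Radon--Nikodym derivative against the $\nu_{h_L}(\partial\mathbb D)^{-2s/\gamma}$ weight without error, since the shift $\Delta$ couples to the field both through the Gaussian part and through how it rescales the total boundary mass, and one must see that the exponent $2s/\gamma$ — with $s = \sum\alpha_i + \sum\beta_j/2 - Q$ as in \eqref{def:s} — is precisely the value that makes the compensation exact. A secondary technical point is justifying that $\Delta$ (and the relevant recentering functionals) lie in the Cameron--Martin space and that all the exponential moments are finite under the stated hypothesis $\expect{\nu_{h_L(\rho)}(\partial\mathbb D)^{-2s/\gamma}} < \infty$; these are routine given the explicit smoothness of $G_D$ on the domains considered, but should be stated. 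Throughout, it is cleanest to phrase everything in terms of the field viewed modulo additive constants, reintroducing the constant mode only when writing the weighted law, so that the cancellation in Step 3 is manifest.
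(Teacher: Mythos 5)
Your high-level plan coincides with the paper's: decompose the change of background measure into a deterministic shift of the field plus a random recentering constant, use the invariance of $h-\frac{2}{\gamma}\log\nu_h(\partial\mathbb{D})$ under constant shifts, and apply Cameron--Martin/Girsanov, with the exponent $-2s/\gamma$ supplying exactly the needed tilting. However two of your intermediate claims are wrong as stated. First, $h_{\rho_1}$ and $h_{\rho_2}+\Delta$ do \emph{not} have the same covariance: adding a deterministic function to a Gaussian field leaves its covariance unchanged, so no such $\Delta$ can repair a covariance mismatch, and your formula $h_{\rho_1}\overset{d}{=}h_{\rho_2}+\Delta-m_{\rho_1}(h_{\rho_2}+\Delta)$ is off by the deterministic function $\Delta-m_{\rho_1}(\Delta)$. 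The true relation is simply $h_{\rho_1}\overset{d}{=}h_{\rho_2}-m_{\rho_1}(h_{\rho_2})$: pure recentering by a random constant, with no deterministic discrepancy at this step. Second, the deterministic part of the discrepancy between the assembled Liouville fields is \emph{not} a global constant $C_{\rho_1,\rho_2}$. Writing $h_{L(\rho)}=h_{L(\rho_0)}-(h,\rho-\rho_0)+g_\rho$, the function $g_\rho(z)=Q\,m_\rho(G_{\mathbb D}(z,\cdot))+\sum_i\alpha_i\bigl(G^\rho_{\mathbb D}(z,z_i)-G_{\mathbb D}(z,z_i)\bigr)+\sum_j\tfrac{\beta_j}{2}\bigl(G^\rho_{\mathbb D}(z,s_j)-G_{\mathbb D}(z,s_j)\bigr)$ is a genuinely $z$-dependent harmonic function. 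Your Step~3 therefore does not dispose of it; the constant-shift invariance of $h-\frac{2}{\gamma}\log\nu_h(\partial\mathbb{D})$ only kills the random recentering $-(h,\rho-\rho_0)$, not $g_\rho$.

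The piece you flag as your main obstacle is indeed the crux, and it is left undone. The paper proves by integration by parts that $(\phi,m_\rho(G_{\mathbb D}(z,\cdot)))_\nabla=(\phi,\rho-\rho_0)$ for all smooth $\phi$; since the total coefficient multiplying $-m_\rho(G_{\mathbb D}(z,\cdot))$ in $g_\rho$ is $\sum\alpha_i+\sum\beta_j/2-Q=s$, this yields $(h,g_\rho)_\nabla=-s(h,\rho-\rho_0)$. With that identity the mechanism is clean and runs in a different order than you sketch: the random recentering leaves $h-\frac{2}{\gamma}\log\nu_h(\partial\mathbb{D})$ untouched but multiplies the weight $\nu_{h_L}(\partial\mathbb{D})^{-2s/\gamma}$ by $e^{s(h,\rho-\rho_0)}$, which equals $e^{-(h,g_\rho)_\nabla}$; by Cameron--Martin this exponential is precisely the Radon--Nikodym derivative that shifts the law of the field by $-g_\rho$, cancelling the deterministic $+g_\rho$. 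In short, the weight transformation does not ``compensate'' an independently arising Girsanov factor — it \emph{is} the Girsanov factor. Once Steps~1--2 are corrected and the integration-by-parts identity is supplied, your plan becomes the paper's proof.
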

\begin{proof}
Let $\rho\in\mathbb{P}_{\mathbb{D}}$ and write $h_{L(\rho)}=h_{L(\rho_0)}-(h,\rho-\rho_0)+ g_{\rho}$, where 
\[g_{\rho}(z):=Q m_{\rho}(G_\mathbb{D}(z,\cdot))+\sum_{i=1}^n \alpha_i (G_\mathbb{D}^\rho(z,z_i)-G_\mathbb{D}(z,z_i)) + \sum_{j=1}^r \frac{\beta_j}{2} (G_\mathbb{D}^\rho(z,s_j)-G_\mathbb{D}(z,s_j)).
\] 
Then $h_{L(\rho)}-\frac{2}{\gamma} \log \nu_{h_L(\rho)}(\partial\mathbb{D})$ under $\mathbb{P}_{\rho}$ has same law as  $h_{L(\rho_0)} + g_{\rho}-\frac{2}{\gamma} \log \nu_{h_{L(\rho_0)} + g_{\rho}}(\partial\mathbb{D})$ under the probability measure $d\mathbb{\tilde{P}}_{\rho_0}\propto \exp(s(h,\rho-\rho_0))d\mathbb{P}_{\rho_0}$.\\
We assume for now that $h_{L(\rho)}$ has been defined with the property that \[s(h,\rho-\rho_0)=-(h,g_{\rho})_{\nabla}.\] 
Therefore by the Cameron-Martin theorem this is tantamount to shifting the law of $h_{L(\rho_0)}$ by an additive term $-g_{\rho}$, which allows us to conclude the proof.

Now let us turn to the proof of the claim. For any $\phi$ in $H_a(D)$ we have:
\[2\pi\left(\phi,m_{\rho}(G_\mathbb{D}(z,\cdot))\right)_{\nabla}=\left(-\Delta \phi, m_{\rho}(G_\mathbb{D}(z,\cdot)\right)+\int_{\partial \mathbb{D}} \partial_n \phi(y) m_{\rho}(G_\mathbb{D}(\cdot,y))\lambda_{\partial}(dy)\] 
by the integration by parts formula, and then using Fubini identity and the property \eqref{IPP} of the Green's function yields 
\begin{align*}
    &=\left(\rho,2\pi (\phi(\cdot)-m_{\partial \mathbb{D}}(\phi))-\int_{\partial \mathbb{D}} G_\mathbb{D}(\cdot,y) \partial_{n} \phi(y)\lambda_{\partial}(dy)\right)+\int_{\partial \mathbb{D}} \partial_n \phi(y) m_{\rho}(G_\mathbb{D}(\cdot,y))\lambda_{\partial}(dy) \\
&=2\pi(\phi,\rho-\rho_0).
\end{align*}
Since $-s=Q-\sum \alpha_i -\sum \beta_j/2$, summing the corresponding terms in $g_{\rho}$ yields the result.
\end{proof}
In the sequel we may often simply write $h_L$ for $h_{L(\rho)}$ regardless of $\rho$ since we will work under such probability measures. Along the same lines one can see that the same reasoning remains valid when we consider $\rho$ in $\mathbb P_{\mathbb{D},\partial}$.

It is also interesting to note that such a statement should stay true in a broader context, \textit{e.g.} if we replace $\mathbb{D}$ by some Riemann surface, provided that we have defined $s$ consistently and considered the appropriate constant in front of the log.

\subsection{The unit boundary length quantum disk in the mating-of-trees approach}\label{def_DMS}
In this subsection we present the approach developed by Duplantier, Miller and Sheffield in the article \cite{DMS14} to define the unit boundary length quantum disk as an equivalence class of \textit{random surfaces}. This approach follows the ideas sketched by Sheffield in \cite{S16}.
Like before, we fix a constant $\gamma\in(0,2)$ throughout this subsection.

\subsubsection{Definition of the unit boundary length quantum disk}\label{DefBes}
We have already defined in the introduction the notion of \textit{quantum surface with $k$ marked points} as a class equivalence of $(k+2)$-tuples. Using this notion, we are now ready to introduce the definition of the unit boundary length quantum disk provided in \cite{DMS14}, which relies on an encoding using Bessel processes (see \cite{RY91}, \cite{GJY} or \cite[Subsection 3.2]{DMS14} for details on these objects).\\
The sketch of this encoding is the following one:\begin{enumerate}
\item We work in the strip $\mathcal{S}:=\mathbb{R}\times [0,\pi]$, that comes along with two marked points located at $\pm\infty$.
\item One then decomposes its Sobolev space into a Hilbertian sum of radial functions and functions with zero-mean on each vertical line. Thus describing a distribution on $\mathcal{S}$ is tantamount to giving its components on these two spaces.
\item The radial part of the field, which is nothing but a $\mathbb R\to\mathbb R$ map, is set to $\frac{2}{\gamma}\log e$, where $e$ is a Bessel excursion of dimension $3-\frac{4}{\gamma^2}$ parametrised so that $\frac{2}{\gamma}\log e$ has quadratic variation $2 du$.
\item The zero-vertical-mean part is given by the projection of an independent free boundary GFF.
\end{enumerate}
Let us denote the corresponding law of the field by $\mathcal{M}$. The \textit{unit boundary length quantum disk with two boundary marked points} is then defined by considering the law on quantum surfaces described by $(\mathcal{S},\mathfrak{h},-\infty,+\infty)$, where $\mathfrak{h}$ has the law of $\mathcal{M}$ conditioned on the event that $\nu_{\mathfrak{h}}(\partial\mathcal{S})=1$. Eventually we define the \textit{unit boundary length quantum disk with three marked points} to be the law on quantum surfaces described by $(\mathcal{S},\mathfrak{h},\omega,-\infty,+\infty)$, where $\omega$ is sampled according to $\nu_{\mathfrak{h}}$, which is the (random) boundary measure obtained from the field associated with the unit boundary length quantum disk with two marked points. In the sequel we will denote by $(\mu^{UBL}_{DMS},\nu^{UBL}_{DMS})$ the pair of random measures given by embedding the unit boundary length quantum disk with three marked points in $\mathbb{H}$ so that the three marked points are $(0,1,\infty)$.

In the definition of the quantum disk with two marked points, we see that we still have one degree of M\"obius freedom given by the horizontal translation along the real line. As a consequence we will say that we fix an embedding for the quantum surface when we choose an horizontal shift and consider the law of the Liouville Quantum Gravity measures obtained by taking the representative of $(\mathcal{S},\mathfrak{h},-\infty,+\infty)$ according to this translation. It will be convenient in the sequel to work in the \textit{maximal embedding}, where the maximum of the radial part of the field is attained at zero.

A similar definition can also be found in \cite{DMS14} for the unit area quantum sphere.

\subsubsection{Another construction for the unit boundary length quantum disk} 
We start by providing important limiting procedures that provide a construction of the unit boundary length quantum disk in terms of limits of GFF. For the sake of completeness, we provide here a sketch of the limiting procedure given in \cite[Proposition A.1]{DMS14}: 
\begin{itemize}
\item For positive $C$ and $\varepsilon$, let $h_{C,\varepsilon}$ be a GFF on $\mathbb{D}\cap\mathbb{H}$ with zero (resp. free) boundary conditions on $\partial \mathbb{D}\cap\mathbb{H}$ (resp. $L:=\mathbb{D}\cap\partial\mathbb{H}$) conditioned on $\lbrace\nu_{h_{C,\varepsilon}}(L)\in[e^{C},e^{C}(1+\varepsilon)]\rbrace$.
\item Sample $w$ from the boundary measure $\nu_{h_{C,\varepsilon}}$ and conformally map $(\mathbb{H},w,\infty)$ to $(\mathcal{S},\infty,-\infty)$. There is one extra degree of freedom---that is horizontal translations---when fixing the embedding: a convenient choice can be made\footnote{Without loss of generality one may instead choose to fix the horizontal translation using the maximal embedding, which we will assume in the sequel.} by assuming that $\nu_{\mathfrak h_{C,\varepsilon}}(\{0\}\times[0,\pi])=\frac{e^C}2$, where $\mathfrak{h}_{C,\varepsilon}$ is the field obtained on $\mathcal{S}$ using the usual rule of change of coordinates.
\item The law of the field $\mathfrak{h}_{C,\varepsilon}-\frac{2}{\gamma} C$ then converges weakly in the space of distributions to that of the unit boundary length quantum disk with two marked points $(\infty,-\infty)$ on $\mathcal{S}$ as $C\rightarrow\infty$ and then $\varepsilon\rightarrow0$, when embedded according to the chosen translation.
\end{itemize}
Note that we can give a meaning to a distribution on $D$ given $h_0$ defined on a subdomain $D_0$ of $D$ by extending it to zero on the complementary of $D_0$ in $D$: this justifies the previous statements. The same applies for the Liouville measures associated to the field $h_0$, which we set to be equal to zero outside of the domain $D_0$. In the sequel we will implicitly make use of this convention. We will also say that a sequence of random measures $(\mu_\varepsilon,\nu_\varepsilon)$ converges weakly in law when the law of the pair of random measures weakly converges in the sense of Radon measures. We will make use of the same terminology with fields by implying that weak convergence in law simply means weak convergence (in the sense of distributions) of the law of a sequence of fields. 

The following procedure for defining the law of the unit boundary length quantum disk, slightly more explicit, can be raised from the proof of \cite[Proposition A.1]{DMS14}
\begin{prop}[Limiting procedure for the unit boundary length quantum disk with two marked points]\label{ADMS1}
\hspace{0cm}\\
Let $h_0$ be a GFF on $\mathbb{D}\cap\mathbb{H}$ with zero (resp. free) boundary conditions on $\partial \mathbb{D}\cap\mathbb{H}$ (resp. $\mathbb{D}\cap\partial\mathbb{H}$) and assume that $C_{\varepsilon}$ goes to $+\infty$ as $\varepsilon$ goes to zero. Define a field
\begin{center}
$ h^{\varepsilon}:=h_0 - \gamma \log|z|-C_{\varepsilon}$
\end{center} 
and condition on the event $\lbrace\nu_{h^{\varepsilon}}(L)\in[e^{-\gamma\delta},e^{\gamma\delta}]\rbrace$. Denote by $\psi$ the unique conformal mapping between $\mathbb{H}$ and $\mathcal{S}$ that sends $(0,\infty)$ to $(-\infty,+\infty)$ and such that the maximum of the radial part of the field \[
\mathfrak h^\varepsilon:= h^{\varepsilon}\circ\psi+Q\ln\vert\psi'\vert
\]
is attained at $0$.

Then, as $\varepsilon\rightarrow0$ and then $\delta\rightarrow0$, the field $\mathfrak h^\varepsilon$ converges weakly in law to $\mathfrak h$, whose law is given by $\mathcal{M}$ conditioned on the event that $\nu_{\mathfrak{h}}(\partial\mathcal{S})=1$ and embedded in $(\mathcal{S},-\infty,-\infty)$ according to the maximal embedding.
\end{prop}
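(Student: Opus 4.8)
The plan is to deduce the statement from the limiting procedure recalled just above (equivalently, from \cite[Proposition A.1]{DMS14}) by showing that the construction given here is a reformulation of that one. Two differences have to be bridged: in the recalled procedure the $\gamma$-log-singularity at the first marked point is produced by sampling that point $w$ from the boundary GMC measure $\nu_{h_{C,\varepsilon}}$ and recentering, whereas here it is inserted by hand at $0$ through the term $-\gamma\log|z|$; and the built-in shift $-C_\varepsilon$ together with the window $[e^{-\gamma\delta},e^{\gamma\delta}]$ has to be matched with the shift $-\tfrac{2}{\gamma}C$ and the window $[e^{C},e^{C}(1+\varepsilon)]$ of the recalled procedure.

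For the first point I would run a Girsanov (Cameron--Martin) computation at the level of the circle-average regularizations. Writing $\nu_{h_{C,\varepsilon}}^{\eta}(dw)=\eta^{\gamma^2/4}e^{\frac{\gamma}{2}h_{C,\varepsilon}^{\eta}(w)}\lambda_{\partial}(dw)$ for the $\eta$-semicircle-average approximation of $\nu_{h_{C,\varepsilon}}$ on $L$, one has, for bounded continuous $F$,
\begin{equation*}
\mathbb{E}\!\left[\int_L F(h_{C,\varepsilon},w)\,\nu_{h_{C,\varepsilon}}^{\eta}(dw)\right]=\int_L \eta^{\gamma^2/4}\,\mathbb{E}\!\left[e^{\frac{\gamma}{2}h_{C,\varepsilon}^{\eta}(w)}\right]\,\mathbb{E}\!\left[F\big(h_{C,\varepsilon}+\tfrac{\gamma}{2}G_{\mathbb{D}\cap\mathbb{H}}^{\eta}(\cdot,w),w\big)\right]\lambda_{\partial}(dw),
\end{equation*}
where $G_{\mathbb{D}\cap\mathbb{H}}^{\eta}(\cdot,w)$ is the $\eta$-semicircle average of the Green's kernel in its second variable. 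Letting $\eta\to0$, the prefactor tends to a smooth positive density $\kappa(w)$ on $L$, while $\tfrac{\gamma}{2}G_{\mathbb{D}\cap\mathbb{H}}(\cdot,w)=-\gamma\log|\cdot-w|+(\text{bounded harmonic})$. So sampling $w\sim\nu_{h_{C,\varepsilon}}$ and recentering at $w$ amounts, up to the reweighting by $\kappa(w)$ and this bounded harmonic correction, to inserting a $-\gamma\log|\cdot-w|$ singularity at a fixed marked point. Moreover the choice of that point on $L$ is immaterial once the maximal embedding is imposed: two conformal maps from $\mathbb{H}$ sending a boundary point and $\infty$ to $(-\infty,+\infty)$ differ by a horizontal translation of $\mathcal{S}$, which the maximal embedding fixes; so we may take the point to be $0$, i.e. use $-\gamma\log|z|$, with $\kappa$ and the harmonic correction now deterministic. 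Under the conformal map and in the $C\to\infty$ limit the $-\gamma\log$ profile at that end of $\mathcal{S}$ becomes the $\tfrac{2}{\gamma}\log$ of a Bessel excursion and the leftover deterministic data is absorbed by the conditioning and the unit-length normalization, reproducing the encoding $\mathcal{M}$ of \cite{DMS14} (this is where the dimension $3-\tfrac{4}{\gamma^2}$ and the quadratic-variation normalization come from).

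For the second point, set $\widehat h^{\varepsilon}:=h_0-\gamma\log|z|$, so $h^{\varepsilon}=\widehat h^{\varepsilon}-C_\varepsilon$ and $\nu_{h^{\varepsilon}}(L)=e^{-\frac{\gamma}{2}C_\varepsilon}\,\nu_{\widehat h^{\varepsilon}}(L)$. Conditioning on $\{\nu_{h^{\varepsilon}}(L)\in[e^{-\gamma\delta},e^{\gamma\delta}]\}$ is then conditioning on $\nu_{\widehat h^{\varepsilon}}(L)$ being of order $e^{C}$ with $C:=\tfrac{\gamma}{2}C_\varepsilon\to\infty$, and on that event $h^{\varepsilon}$ differs from the unit-boundary-length field $\widehat h^{\varepsilon}-\tfrac{2}{\gamma}\log\nu_{\widehat h^{\varepsilon}}(L)$ by a constant of size at most $2\delta$; the window $[e^{C-\gamma\delta},e^{C+\gamma\delta}]$ it imposes on $\nu_{\widehat h^{\varepsilon}}(L)$ plays the role of the window $[e^{C},e^{C}(1+\varepsilon)]$ and likewise shrinks to $\{e^C\}$. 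So the present procedure coincides, in the regime $\varepsilon\to0$ (hence $C\to\infty$) and then $\delta\to0$, with the recalled one, and \cite[Proposition A.1]{DMS14} gives weak convergence of $\mathfrak h^{\varepsilon}$ to a field with the law of $\mathcal{M}$ conditioned on $\nu_{\mathfrak h}(\partial\mathcal{S})=1$. Taking both the approximating fields and the limit in the maximal embedding is legitimate by the footnote to that proposition, and the passage from the embedding used there to the maximal one is a single almost-surely-convergent horizontal translation, which preserves weak convergence.

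The main obstacle is to make these two reductions uniform in the parameters. One must control, uniformly as $C\to\infty$ (resp. as $\delta,\varepsilon\to0$), the bounded harmonic correction $\tfrac{\gamma}{2}G_{\mathbb{D}\cap\mathbb{H}}(\cdot,0)+\gamma\log|\cdot|$ and the Girsanov weight $\kappa$, and show that after the conditioning and the unit-length normalization their effect vanishes in the weak-convergence limit; and one must show that the recentering translation defining the maximal embedding converges almost surely, which rests on the radial part of the limiting field being $\tfrac{2}{\gamma}\log$ of a Bessel excursion whose maximum is a.s. attained at a unique point. Throughout, the splitting of the radial (Bessel) component from the angular (free-boundary GFF) component, and the handling of the circle-average regularization and of the change-of-coordinates rule \eqref{CCD2} on the relevant (half-)disk and strip, are carried out with the radial--angular and circle-average decompositions of Propositions~\ref{OD1} and~\ref{OD2}.
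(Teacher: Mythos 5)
Your proposal tries to deduce Proposition~\ref{ADMS1} by matching its statement with the \emph{statement} of \cite[Proposition~A.1]{DMS14}, via a Girsanov/rooted-measure step converting the sampling of $w$ into a deterministic log-singularity, followed by a ``WLOG $w=0$'' reduction and a matching of the normalization windows. The paper's proof instead is a one-step reduction to the \emph{proof} of that proposition: applying the coordinate change $\psi$ with rule~\eqref{CCD2} transfers the quantum surface $(\mathbb{H},h^{\varepsilon},0,\infty)$ to $(\mathcal{S},\mathfrak h^{\varepsilon},\cdot\,,\cdot)$ with
\[
\mathfrak h^{\varepsilon}=\mathfrak h_{0}+(\gamma-Q)\,\mathrm{Re}(\cdot)-C_{\varepsilon},
\]
where $\mathfrak h_0$ is a GFF on the half-strip $\mathcal S_+=[0,\infty)\times[0,i\pi]$ with mixed boundary conditions, and observes that this is exactly the intermediate object arrived at inside the proof of \cite[Proposition~A.1]{DMS14} once the rooted-measure reduction there has been carried out; the claimed convergence then follows by citing that proof. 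No reconciliation of the two limiting procedures is needed.

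Your route could in principle be made rigorous (it amounts to redoing the rooted-measure step inside the DMS proof), but as written it has a genuine gap in the reduction ``the choice of the sampled boundary point is immaterial once the maximal embedding is imposed.'' The maximal embedding fixes, \emph{for a given $w$}, the horizontal-translation ambiguity of the conformal map $\mathbb{H}\to\mathcal{S}$; it does not make the law of the embedded field independent of $w$. Concretely, for $w\neq 0$ there is no conformal self-map of $\mathbb{D}\cap\mathbb{H}$ preserving the mixed boundary decomposition that sends $w$ to $0$, so both the Cameron--Martin shift $\tfrac{\gamma}{2}G_{\mathbb{D}\cap\mathbb{H}}(\cdot,w)$ (hence its bounded harmonic part and the marginal density $\kappa(w)$) and the image of $\mathbb{D}\cap\mathbb{H}$ under $z\mapsto-\log(z-w)$ genuinely depend on $w$: the latter equals $\mathcal S_+$ only when $w=0$. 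Showing that this $w$-dependence disappears in the $C\to\infty$ limit is precisely the nontrivial work inside the DMS proof and is not an automatic consequence of fixing the embedding. Two further points are left implicit: \cite{DMS14} samples $w$ from the \emph{normalized} measure $\nu_{h_{C,\varepsilon}}/\nu_{h_{C,\varepsilon}}(L)$ \emph{after} conditioning, so the Girsanov identity you display (which uses the unnormalized $\nu$) must be adapted; and the Cameron--Martin tilt also shifts the conditioning event $\{\nu(L)\in I\}$, which your computation omits.
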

\begin{proof}
The law of the quantum surface $(\mathbb{H},h^{\varepsilon},0,\infty)$ is the same as the law of the quantum surface $(\mathcal{S},\mathfrak{h}^{\varepsilon},+\infty, -\infty)$ using the conformal mapping $\psi$, where 
\[\mathfrak{h}^{\varepsilon}:=\mathfrak{h}_0 + (\gamma-Q)\mathrm{Re}(\cdot)-C_{\varepsilon}\]
and $\mathfrak{h}_0$ is a GFF on $\mathcal{S}_+=[0,+\infty)\times [0,i\pi]$ with zero (resp. free) boundary conditions on $\lbrace0\rbrace\times[0,i\pi]$ (resp. $(0,+\infty)\times \lbrace0,i\pi\rbrace$). We are precisely in the setting of the proof of \cite[Proposition A.1]{DMS14} establishing convergence of the above sequence to the unit boundary length quantum disk with two marked points.
\end{proof}
We now give the following similar approximation proposition, which describes the limiting procedure for the unit boundary length quantum disk with three marked points that we will work with in the rest of the article. For future convenience we introduce for positive $\varepsilon$ the domain $D^{\varepsilon}:=\frac{1}{\sqrt{\varepsilon}}\mathbb{D}\cap\mathbb{H}$ and let $h_0^{\varepsilon}$ be a GFF on $D^{\varepsilon}$ with zero (resp. free) boundary conditions on $\partial D^{\varepsilon}\cap\mathbb{H}$ (resp. $\partial D^{\varepsilon}\cap\partial\mathbb{H}$). We also let $G_{D^{\varepsilon}}$ be the Green's kernel associated to this problem.
\begin{prop}[Limiting procedure for the unit boundary length quantum disk with three marked points]\label{ADMS1bis}
\hspace{0cm}\\
Let $h^{\varepsilon}$ be a field on $D^{\varepsilon}$ defined by 
\[h^{\varepsilon}:=h_0^{\varepsilon}+\frac{1}{2}(2Q-\gamma)\log\varepsilon + \frac{\gamma}{2} G_{D^{\varepsilon}}(z,0).\]
Denote by $\hat{h}^\varepsilon$ the field whose law is given by conditionning $h^\varepsilon$ on the event $E^{\varepsilon}_{\delta}(\partial\mathbb{H}):=\lbrace\nu_{h^{\varepsilon}}(\partial\mathbb{H})\in[e^{-\gamma\delta},e^{\gamma\delta}]\rbrace$, sampling $\omega^{\varepsilon}$ on $\frac{1}{\sqrt{\varepsilon}}\mathbb{D}\cap\partial\mathbb{H}$ according to the law of $\nu_{h^{\varepsilon}}$ and conformally mapping $(0,\omega^{\varepsilon},\infty)$ to $(0,1,\infty)$ with the M\"obius transform of $\mathbb{H}$: $z\mapsto \omega^\varepsilon z$. 

Then, when we let $\varepsilon\rightarrow0$ and then $\delta\rightarrow0$, the pair of random measures $(\mu_{\hat h^{\varepsilon}},\nu_{\hat h^{\varepsilon}})$ converges weakly in law to the pair of random measures $(\mu^{UBL}_{DMS},\nu^{UBL}_{DMS})$.
\end{prop}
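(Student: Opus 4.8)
The plan is to deduce the statement from Proposition~\ref{ADMS1} in three moves: a conformal reduction of the two-marked-point part to the situation treated there, the incorporation of the third marked point sampled from the boundary GMC, and the passage to the $(0,1,\infty)$-embedding. For the first move I would transport the field $h^{\varepsilon}$ to a strip. Mapping $D^{\varepsilon}=\tfrac{1}{\sqrt\varepsilon}\mathbb{D}\cap\mathbb{H}$ conformally onto the half-strip $\mathcal{S}_{+}=[0,+\infty)\times[0,i\pi]$ by a suitable translate of $z\mapsto-\log z$, so that the insertion point $0$ is sent to $+\infty$ and the zero-boundary arc $\partial D^{\varepsilon}\cap\mathbb{H}$ to $\{0\}\times[0,i\pi]$, the conformal covariance rule \eqref{CCD2}, the conformal invariance of the GFF with the relevant mixed boundary conditions, and the near-singularity expansion $G_{D^{\varepsilon}}(z,0)=-2\log|z|+O(1)$ together yield that $h^{\varepsilon}$ becomes $\mathfrak{h}_{0}+(\gamma-Q)\mathrm{Re}(\cdot)-C_{\varepsilon}+\theta_{\varepsilon}$ on $\mathcal{S}_{+}$, where $\mathfrak{h}_{0}$ is a GFF with the boundary conditions appearing in the proof of Proposition~\ref{ADMS1}, $C_{\varepsilon}:=\tfrac{Q}{2}\log(1/\varepsilon)\to+\infty$, and $\theta_{\varepsilon}$ is a deterministic harmonic function tending to $0$ locally uniformly. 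Up to the negligible $\theta_{\varepsilon}$ this is exactly the field handled in Proposition~\ref{ADMS1}, so that proposition gives: conditioned on $E^{\varepsilon}_{\delta}(\partial\mathbb{H})$, the two-marked-point quantum surface $(D^{\varepsilon},h^{\varepsilon},0,\infty)$, together with its measures $(\mu_{h^{\varepsilon}},\nu_{h^{\varepsilon}})$, converges as $\varepsilon\to0$ and then $\delta\to0$ to the unit boundary length quantum disk with two marked points.

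Next I would add the third point. Realizing these surfaces in a common embedding and invoking standard GMC facts — the boundary measure is a measurable functional of the field, almost surely continuous and locally absolutely continuous with respect to that of a GFF (see \cite{RV16, BSS14}) — the previous convergence upgrades to joint convergence in law of $(h^{\varepsilon},\mu_{h^{\varepsilon}},\nu_{h^{\varepsilon}})$ towards the analogous triple for the two-point disk. Since the conditioning keeps $\nu_{h^{\varepsilon}}(\partial\mathbb{H})$ within $[e^{-\gamma\delta},e^{\gamma\delta}]$ and the limiting boundary measure is almost surely finite, non-atomic and carries all its mass on $\partial\mathbb{H}$ (no escape of mass as $D^{\varepsilon}$ exhausts $\mathbb{H}$), a Skorokhod coupling shows that a point $\omega^{\varepsilon}\sim\nu_{h^{\varepsilon}}$ converges jointly to a point $\omega$ sampled from the limiting boundary measure, with $\omega\neq0,\infty$ almost surely; hence $(D^{\varepsilon},h^{\varepsilon},0,\omega^{\varepsilon},\infty)$ converges to $(\mathcal{S},\mathfrak{h},\omega,-\infty,+\infty)$, i.e. to the unit boundary length quantum disk with three marked points. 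Finally, since the real dilation sending $(0,\omega^{\varepsilon},\infty)$ to $(0,1,\infty)$ (the M\"obius transform of the statement) converges locally uniformly on $\mathbb{H}$ to its limiting counterpart — using $\omega^{\varepsilon}\to\omega\in(0,+\infty)$ and $D^{\varepsilon}\uparrow\mathbb{H}$ — the change-of-variables rule \eqref{CCD2} gives that $(\mu_{\hat h^{\varepsilon}},\nu_{\hat h^{\varepsilon}})$ converges in law to the pushforward of the two-point disk measures under the corresponding embedding, which is precisely $(\mu^{UBL}_{DMS},\nu^{UBL}_{DMS})$.

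The hard part will be the second move: upgrading the field convergence of Proposition~\ref{ADMS1} to joint convergence of the field and its boundary GMC on the \emph{growing} domain $D^{\varepsilon}$ — controlling a possible loss of mass near the receding arc $\partial D^{\varepsilon}\cap\mathbb{H}$ and verifying that conditioning on $E^{\varepsilon}_{\delta}$ does not destroy the convergence — and then showing that sampling a point from the resulting weakly convergent random boundary measures is itself continuous, which hinges on the almost sure absence of atoms and on the third point remaining bounded away from $0$ and $\infty$ in the limit. By comparison, the coordinate computation of the first move (checking $C_{\varepsilon}=\tfrac{Q}{2}\log(1/\varepsilon)$ and that $\theta_{\varepsilon}$ is asymptotically negligible) is routine, though it needs some care because of the mixed boundary conditions and the $\varepsilon$-dependence of $D^{\varepsilon}$.
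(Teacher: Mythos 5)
Your overall approach is the same as the paper's: a conformal reduction of the two--marked--point part of $(D^{\varepsilon},h^{\varepsilon},0,\infty)$ to the setting of Proposition~\ref{ADMS1}, followed by a Skorokhod coupling to handle the third marked point sampled from the boundary GMC, and finally the change of variable~\eqref{CCD2} for the dilation taking $(0,\omega^\varepsilon,\infty)$ to $(0,1,\infty)$. The only procedural difference is cosmetic: the paper first rescales by $z\mapsto\sqrt{\varepsilon}z$ to go back to $\mathbb D\cap\mathbb H$ and then invokes Proposition~\ref{ADMS1}, whereas you go directly to the half-strip; these are conformally equivalent.

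There is, however, a computational slip in the first move. You write $G_{D^{\varepsilon}}(z,0)=-2\log|z|+O(1)$ and deduce $C_{\varepsilon}=\tfrac{Q}{2}\log(1/\varepsilon)$, but the $O(1)$ there is not bounded as $\varepsilon\to0$: from~\eqref{Green_scale} one has \emph{exactly} $G_{D^{\varepsilon}}(z,0)=-2\log|z|-\log\varepsilon$ (no remainder term, and in particular no nontrivial $\theta_\varepsilon$ arising from this insertion). Carrying the $-\log\varepsilon$ through the change of coordinates and the Weyl factor $Q\log|\phi'|$ yields the constant
\[
C_{\varepsilon}=\tfrac{1}{2}(Q-\gamma)\log(1/\varepsilon),
\]
as in the paper. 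This slip is benign for the ultimate conclusion, because Proposition~\ref{ADMS1} only requires $C_{\varepsilon}\to+\infty$ and both your value and the correct one satisfy that since $Q>\gamma>0$; still, the near-singularity ``$O(1)$'' is the precise place the computation goes wrong, and the constant $\tfrac12(2Q-\gamma)\log\varepsilon$ in the definition of $h^\varepsilon$ is chosen exactly so that after the reduction the field takes the form $\mathfrak h_0+(\gamma-Q)\mathrm{Re}(\cdot)-C_\varepsilon$ with $C_\varepsilon=\tfrac12(Q-\gamma)\log(1/\varepsilon)$. The rest of your argument (joint convergence of field and boundary GMC, non-atomicity and tightness of the boundary measure so that the sampled point converges, then the dilation) matches the paper's reasoning at a comparable level of detail.
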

Before dealing with the proof, we shed light on an useful scaling property of the Green's function $G_{D^{\varepsilon}}$. By the reflection principle one has: 
\begin{equation}\label{Green_scale}
G_{D^{\varepsilon}}(x,y)=-\log\frac{|x-y||x-y^*|}{|\frac{1}{\varepsilon}-xy||\frac{1}{\varepsilon}-xy^*|} + \log\varepsilon= G_{\mathbb{H}}(x,y)-\log \varepsilon+r_{\varepsilon}(x,y)
\end{equation}
where for any $y\in\mathbb{H}$, $r_{\varepsilon}(\cdot,y)$ is harmonic and converges uniformly on every compact to zero as $\varepsilon\rightarrow0$.

\begin{proof}
When we apply the conformal map $z\mapsto \sqrt{\varepsilon}z$ on $D^{\varepsilon}$ the law of the pushforwarded Liouville measures are the same as the ones on $\mathbb{D}\cap\mathbb{H}$ given by the field
\[h_0^{\varepsilon=1}+\frac{1}{2}(Q-\gamma)\log\varepsilon - \gamma \log|z|\]
since $G_{D^{\varepsilon}}(z,0)=-2\log|z| - \log\varepsilon$, and that by conformal invariance of the GFF $h_0^{\varepsilon=1}$ and $h_0^{\varepsilon}(\sqrt{\varepsilon}\cdot)$ have same law.
Letting $C_{\varepsilon}$ be $\frac{1}{2}(Q-\gamma)\log\frac1\varepsilon$ (which goes to $+\infty$ as $\varepsilon$ goes to zero since $Q-\gamma>0$), the previous result yields that (when rescaled via the maximal embedding) $(\mu_{h^{\varepsilon}},\nu_{h^{\varepsilon}})$ converges weakly in law to to the pair of random measures $(\mu_{DMS},\nu_{DMS})$ given by mapping conformally the unit boundary length quantum disk with two marked points into $\mathbb{H}$ and with the horizontal translation fixed by the maximal embedding.
Therefore under this maximal embedding, if we work on $\mathcal{S}$ and sample $\omega^{\varepsilon}$ under $\nu_{\mathfrak h^{\varepsilon}}$ and likewise sample $\omega$ according to $\nu_{DMS}$, we can find a coupling (thanks to Skorokhod's representation theorem) between these variables such that the measures converge almost surely and $\lim\limits_{\varepsilon\rightarrow0} |w^{\varepsilon}- w|=0$ in probability. Hence conformally mapping $\omega^{\varepsilon}$ to 1 and taking the limit gives the law of the unit boundary length quantum disk embedded in $\mathbb{H}$ so that the three marked points are $(0,1,\infty)$. 
\end{proof}

Eventually, we will need to know some information on the location of the point sampled. In the following statement we set $\mathfrak h_0^{\varepsilon}$ to be a GFF on $\mathcal D^{\varepsilon}:=(\frac{1}{2}\log \varepsilon,\infty)\times [0,i\pi]$ with zero (resp. free) boundary conditions on $\lbrace\frac{1}{2}\log\varepsilon\rbrace\times [0,i\pi]$ (resp. $(\frac{1}{2}\log\varepsilon,\infty)\times \lbrace0,i\pi\rbrace$). We also introduce the field $\mathfrak h^{\varepsilon}:=\mathfrak h_0^{\varepsilon}-(Q-\gamma)\mathrm{Re}(z)+(Q-\gamma)\log\varepsilon$, which we extend as explained above to $0$ outside of the domain $\mathcal D^{\varepsilon}$. Similarly the associated measures are extended to zero outside of $\mathcal D^{\varepsilon}$.
\begin{lemma}[Useful estimates]\label{LOCS}
\hspace{0cm}\\
Sample $w^{\varepsilon}$ according to $\nu_{\mathfrak h^{\varepsilon}}$ and denote $\mathcal E^{\varepsilon}_{\delta}(\partial\mathcal S)$ the event that $\lbrace\nu_{\mathfrak h^{\varepsilon}}(\partial\mathcal S)\in[e^{-\gamma\delta},e^{\gamma\delta}]\rbrace$.
Then we have the following estimates: 
\begin{itemize}
\item Conditionally on $\mathcal E^{\varepsilon}_{\delta}(\partial\mathcal S)$, $\lim\limits_{\varepsilon\rightarrow0}\frac{\mathrm{Re}(w^{\varepsilon})}{|\log\varepsilon|^{2/3}}=0$ in law.
\item $\lim\limits_{\delta\rightarrow 0} \lim\limits_{\varepsilon\rightarrow 0}  \hat{\mathbb{P}}^{\varepsilon}(\mathcal H^{\varepsilon}\vert \mathcal E^{\varepsilon}_{\delta}(\partial\mathbb{H}))=1$,
\end{itemize}
where $\mathcal H^{\varepsilon}:=\lbrace \mathcal A_{\varepsilon}\geq -|\log \varepsilon|^{2/3}\rbrace$, with $\mathcal A_{\varepsilon}$ being the  mean value of $\mathfrak h^{\varepsilon}$ on $\lbrace \mathrm{Re}(w^{\varepsilon})\rbrace\times [0,i\pi]$.
\end{lemma}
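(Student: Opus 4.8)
The plan is to analyze the field $\mathfrak h^\varepsilon$ through the orthogonal decomposition of Proposition \ref{OD1}: on the strip (or half-strip) $\mathcal D^\varepsilon$ we split $\mathfrak h_0^\varepsilon$ into its projection $\mathcal A^\varepsilon(\cdot)$ onto functions constant on each vertical segment $\{t\}\times[0,i\pi]$ (the ``radial part'', which is a Brownian-motion-like process in the horizontal coordinate) and an independent ``lateral part'' with zero vertical mean. Adding the deterministic drift $-(Q-\gamma)\mathrm{Re}(z)+(Q-\gamma)\log\varepsilon$ only affects the radial part, turning $\mathcal A_\varepsilon$ at horizontal coordinate $t$ into (essentially) a Brownian motion with negative drift $-(Q-\gamma)$ started near $0$ at $t=\tfrac12\log\varepsilon$. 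By standard Bessel/Brownian-with-drift estimates (the same ones underlying \cite[Proposition A.1]{DMS14}), such a process has a maximum that is $O_{\mathbb P}(1)$ and, more importantly, the total boundary GMC mass $\nu_{\mathfrak h^\varepsilon}(\partial\mathcal S)$ is dominated by the part of the trajectory near its maximum; conditioning on $\mathcal E^\varepsilon_\delta(\partial\mathcal S)$ forces that maximum to be $O_{\mathbb P}(1)$ as well. First I would make this precise: write $\nu_{\mathfrak h^\varepsilon}(\partial\mathcal S)=\int e^{\frac\gamma2 \mathfrak h^\varepsilon}\,d\lambda_\partial$ as an integral in $t$ of $e^{\frac\gamma2 \mathcal A_\varepsilon(t)}$ times a lateral GMC factor whose conditional law given the radial part is comparable to a fixed (in $t$) random variable with good moments, and show that the main contribution comes from $t$ within $O(1)$ of $\operatorname{argmax}\mathcal A_\varepsilon$.

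For the first bullet, the point $w^\varepsilon$ is sampled from $\nu_{\mathfrak h^\varepsilon}$, so $\mathrm{Re}(w^\varepsilon)$ is distributed (conditionally on the field) proportionally to $e^{\frac\gamma2 \mathcal A_\varepsilon(t)}\times(\text{lateral mass on }\{t\}\times[0,i\pi])$. I would bound the probability that $\mathrm{Re}(w^\varepsilon)\ge a|\log\varepsilon|^{2/3}$ (and symmetrically $\le -a|\log\varepsilon|^{2/3}$) by the ratio of the GMC mass carried by the region $\{\mathrm{Re}(z)\ge a|\log\varepsilon|^{2/3}\}$ to the total mass; since the total mass is pinned in $[e^{-\gamma\delta},e^{\gamma\delta}]$ on the conditioning event, it suffices to show the former tends to $0$ in probability. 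Using that $\mathcal A_\varepsilon$ is a Brownian motion with drift $-(Q-\gamma)<0$ run over an interval of length $|\log\varepsilon|/2$, the supremum of $\mathcal A_\varepsilon$ over $\{\mathrm{Re}(z)\ge a|\log\varepsilon|^{2/3}\}$ is with overwhelming probability at most of order $-c\,a|\log\varepsilon|^{2/3}$ (the drift beats the Brownian fluctuations, which are only $O(|\log\varepsilon|^{1/3})$ on that far-away window by the reflection principle and a union bound over scales), so the far-away GMC mass is at most $e^{-c'a|\log\varepsilon|^{2/3}}$ times a random variable with bounded expectation, hence $\to 0$. Dividing $\mathrm{Re}(w^\varepsilon)$ by $|\log\varepsilon|^{2/3}$ therefore gives a sequence converging to $0$ in probability, hence in law.

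For the second bullet, recall $\mathcal H^\varepsilon=\{\mathcal A_\varepsilon\ge -|\log\varepsilon|^{2/3}\}$ where $\mathcal A_\varepsilon$ is the mean of $\mathfrak h^\varepsilon$ on the vertical segment through $w^\varepsilon$. Conditionally on the field, $\mathcal A_\varepsilon$ equals the radial part evaluated at $\mathrm{Re}(w^\varepsilon)$ plus a vertical-average of the lateral part, which is $0$; so $\mathcal A_\varepsilon=\mathcal A_\varepsilon(\mathrm{Re}(w^\varepsilon))$ up to a term of law not depending on $\varepsilon$. Since $w^\varepsilon$ is chosen by size-biasing against $e^{\frac\gamma2\mathcal A_\varepsilon(t)}\times(\text{lateral mass})$, it preferentially lands where $\mathcal A_\varepsilon$ is \emph{large}, not small; the only way to have $\mathcal A_\varepsilon(\mathrm{Re}(w^\varepsilon))<-|\log\varepsilon|^{2/3}$ is for essentially all of the GMC mass to sit in a region where the radial part is that negative, which by the drift estimate above happens with probability $\to 0$ after first taking $\varepsilon\to0$ and then $\delta\to0$ (the $\delta\to0$ limit removes the small distortion coming from conditioning on $\mathcal E^\varepsilon_\delta$). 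Concretely I would bound $\hat{\mathbb P}^\varepsilon(\mathcal A_\varepsilon<-|\log\varepsilon|^{2/3}\mid \mathcal E^\varepsilon_\delta)$ by (expected GMC mass on $\{\mathcal A_\varepsilon(t)<-|\log\varepsilon|^{2/3}\}$)$/e^{-\gamma\delta}$ and estimate the numerator via Kahane-type moment bounds for GMC together with the Gaussian tail of $\sup_t\{-\mathcal A_\varepsilon(t)\}$ restricted to where it can be that negative.

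The main obstacle I expect is making rigorous the claim that, on the conditioning event $\mathcal E^\varepsilon_\delta(\partial\mathcal S)$, the boundary GMC mass is genuinely concentrated near the maximum of the radial part, with quantitative control at the scale $|\log\varepsilon|^{2/3}$ — in particular decoupling the radial and lateral parts cleanly enough to get the exponential-in-$|\log\varepsilon|^{2/3}$ decay of the far-field mass while only paying polynomial costs from the lateral GMC and from the Brownian fluctuations. The exponent $2/3$ is chosen precisely so that $|\log\varepsilon|^{2/3}$ beats the $|\log\varepsilon|^{1/2}$-scale Brownian fluctuations of $\mathcal A_\varepsilon$ over the whole interval; I would exploit this slack, rather than trying to optimize, to push the moment estimates through.
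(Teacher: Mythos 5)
Your decomposition of $\mathfrak h^\varepsilon$ into radial and angular parts and the size-biasing interpretation of $w^\varepsilon$ match the paper's setup, and your identification of the exponent $2/3$ as a slack-buffer over the Brownian $|\log\varepsilon|^{1/2}$ scale is the right intuition. However, the argument as sketched has a genuine circularity in its core step, and it also misses a substantial shortcut the paper uses.

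The key claim you rely on for both bullets is that, conditionally on $\mathcal E^\varepsilon_\delta(\partial\mathcal S)$, the supremum of the radial part over $\{\mathrm{Re}(z)\geq a|\log\varepsilon|^{2/3}\}$ is with high probability of order $-c\,a|\log\varepsilon|^{2/3}$. Unconditionally this is easy (the mean is of order $-(Q-\gamma)|\log\varepsilon|$ there), but the conditioning on the total mass being $\Theta(1)$ forces the radial process, which starts near $-\tfrac{Q-\gamma}{2}|\log\varepsilon|$ at the left end of $\mathcal D^\varepsilon$, to perform a large upward excursion reaching $O(1)$ at some random time $L^\varepsilon$. Where that excursion peaks is precisely what you need to control: if $L^\varepsilon$ were of order $a|\log\varepsilon|^{2/3}$, then the supremum of $\mathcal A_\varepsilon$ on the far region would be $O(1)$, not $-c\,a|\log\varepsilon|^{2/3}$. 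So your drift-beats-fluctuations estimate silently assumes the conclusion that the argmax is $o(|\log\varepsilon|^{2/3})$, which is exactly (up to the tightness of $L^\varepsilon - \mathrm{Re}(w^\varepsilon)$) the first bullet. The input you are missing is a quantitative estimate for the hitting time of a high level by a Brownian motion with negative drift, conditioned on hitting: namely that for the first passage time $S_A$ of level $A$ by $\tilde B_t - a t$, one has $(S_A - a^{-1}A)/A^{2/3}\to 0$ conditionally on $\{S_A<\infty\}$. This is \cite[Lemma 4.5]{AHS16} and is the crux of the paper's proof. You also do not invoke the tightness of $L^\varepsilon - \mathrm{Re}(w^\varepsilon)$ established in the proof of \cite[Proposition A.1]{DMS14}; the paper uses it to reduce the whole first bullet to a one-dimensional Brownian estimate without ever having to re-derive the GMC concentration that you rightly flag as the hard obstacle. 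Finally, your ``symmetric'' treatment of $\{\mathrm{Re}(z)\leq -a|\log\varepsilon|^{2/3}\}$ overlooks that $\mathcal D^\varepsilon=(\tfrac12\log\varepsilon,\infty)\times[0,i\pi]$ is not symmetric: the left side still contains the entire region down to $\tfrac12\log\varepsilon\approx -\tfrac12|\log\varepsilon|$, where the radial part is unconditionally very negative but where the conditioning-induced excursion may or may not begin, so that region requires its own treatment.
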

\begin{proof}
We use the radial/angular decomposition of the GFF to write it under the form $\mathfrak h^{\varepsilon}=\mathfrak h^{\varepsilon}_{\text{rad}}(\mathrm{Re}(z))+\mathfrak h^{\varepsilon}_{\text{ang}}(z)$ where $\mathfrak h^{\varepsilon}_{\text{rad}}:\mathbb{R}\to\mathbb{R}$ is the mean value of the field on the line in $\mathcal{S}$ with real part $\mathrm{Re}(z)$. From the covariance kernel of the GFF we know (see \cite[Lemma A.3]{DMS14}) that this radial component of $\mathfrak h^{\varepsilon}$, for $t\geq\frac12\log\varepsilon$, has same law as $B_{2t}-(Q-\gamma)t+(Q-\gamma)\log\varepsilon$, where $(B_t)_{t\geq\log\varepsilon}$ is a Brownian motion with $B_{\log\varepsilon}=0$. Denote by $L^{\varepsilon}$ the (first) location where $B_{2t}-(Q-\gamma)t$ achieves its maximum. According to the proof of the previous result, we have that the sequence $L^{\varepsilon}-\mathrm{Re}(w^{\varepsilon})$ is tight, so in order to get the result it suffices to prove that $\lim\limits_{\varepsilon\rightarrow0}\frac{L^{\varepsilon}}{|\log\varepsilon|^{2/3}}=0$ in law (this is precisely the reason why we have chosen to work in the maximal embedding).

We use the notations of \cite[Lemma A.4]{DMS14} and define the event $F^{\varepsilon}_{C}$ that the maximum of $B_{2t}-(Q-\gamma)t+(Q-\gamma)\log\varepsilon$ is larger than $-C$, with the properties that (\cite[Lemma A.4]{DMS14}) for any positive $\delta$ and uniformly in $\varepsilon$,
\[\lim\limits_{C\rightarrow + \infty} \mathbb{P}(F^{\varepsilon}_{C}\vert E^{\varepsilon}_{\delta})=1\quad\text{and}\quad\mathbb{P}(E^{\varepsilon}_{\delta}\vert F^{\varepsilon}_{C})>0.\]
It is therefore enough to show that the result holds for 
\[T^{\varepsilon}:=\inf \lbrace t\geq \frac{1}{2}\log \varepsilon, B_{2t}-(Q-\gamma)t\geq -C-(Q-\gamma)\log\varepsilon\rbrace
\] for any fixed $C$, when conditioning on $F^{\varepsilon}_{C}$. By the Markov property for the Brownian motion, $T^{\varepsilon}$ has same law as 
\[
\frac{1}{2}\left( \log\varepsilon + S_{A}\right):=\frac{1}{2}\left( \log\varepsilon + \inf \lbrace t\geq 0, \tilde{B}_{t}-at\geq A\rbrace\right),
\] where $a=\frac{Q-\gamma}{2}$, $A=-\frac{Q-\gamma}{2}\log\varepsilon-C$ and $\tilde{B}$ is a standard Brownian motion. The result then follows from \cite[Lemma 4.5]{AHS16}: conditioned on $S_A<\infty$, we have
\[\lim\limits_{A\rightarrow +\infty} \frac{S_A-a^{-1}A}{A^{2/3}}=0.\]
Thanks to this point we now have that \begin{align*}
    \hat{\mathbb{P}}^{\varepsilon}(H^{\varepsilon}\vert E^{\varepsilon}_{\delta}(\partial\mathbb{H}))    &=\hat{\mathbb{P}}^{\varepsilon}(B_{2 \mathrm{Re}(w^{\varepsilon})}\geq (Q-\gamma)\mathrm{Re}(w^{\varepsilon}) - |\log\varepsilon|^{2/3}\vert E^{\varepsilon}_{\delta}(\partial\mathbb{H}))+o(1)\\
    &=\hat{\mathbb{P}}^{\varepsilon}(B_{2 L^{\varepsilon}}\geq (Q-\gamma)L^{\varepsilon} - |\log\varepsilon|^{2/3}\vert E^{\varepsilon}_{\delta}(\partial\mathbb{H}))+o(1),
\end{align*}
which tends to 1 according to \cite[Lemma A.4]{DMS14}.
\end{proof}

\section{A limiting procedure for the unit boundary length quantum disk with three log-singularities}\label{equal}

In Proposition~\ref{ADMS1bis} we have studied two alternative definitions for the unit boundary length quantum disk and shed light on a procedure giving in the limit the law of one of them: the unit boundary length quantum disk with three marked points $(\mu^{UBL}_{DMS},\nu^{UBL}_{DMS})$.\\
The goal of this section is to show that we can slightly change this scheme to provide similarly a limiting procedure for the other definition, that is the unit boundary length quantum disk with three log-singularities $(\gamma,\gamma,\gamma)$ located at $(-1,1,-i)$ that we denoted $(\mu^{UBL}_{HRV},\nu^{UBL}_{HRV})$. Eventually we will show that this change in the scheme becomes negligible in the limit, which will yield the equality in law of the two objects previously exposed.

\subsection{Perturbation of the previous scheme}
Let us start with the limiting procedure obtained in Proposition~\ref{ADMS1bis}: \begin{itemize}
\item We first considered the field $h^{\varepsilon}$ on $D^{\varepsilon}$.
\item We conditioned on the event $E^{\varepsilon}_{\delta}(\partial\mathbb{H})=\lbrace\nu_{h^{\varepsilon}}(\partial\mathbb{H})\in[e^{-\gamma\delta},e^{\gamma\delta}]\rbrace$; let $\mathbb{P}^{\varepsilon}_{\delta}$ be the law of the conditioned field.  
\item We sampled $w^{\varepsilon}$ on $\frac{1}{\sqrt{\varepsilon}}\mathbb{D}\cap\partial\mathbb{H}$ according to $\nu_{h^{\varepsilon}}$ and sent the $3$-uple $(0,w^\varepsilon,\infty)$ to $(0,1,\infty)$ via a M\"obius transform of $\mathbb{H}$, where $h^{\varepsilon}$ has law $\mathbb{P}^{\varepsilon}_{\delta}$.
\item We let $\varepsilon\rightarrow0$ and then $\delta\rightarrow0$.
\end{itemize}
Our goal in this section would be to show that for any $F$ non-negative bounded continuous (with the topology of weak convergence) functional on the space of measures over $\overline{\mathbb{H}}$
\begin{equation}\label{eq:convergence}
\lim\limits_{\delta\rightarrow 0} \lim\limits_{\varepsilon\rightarrow 0}  \mathbb{E}\left[F(\mu_{h^{\varepsilon}}, \nu_{h^{\varepsilon}})\vert E^{\varepsilon}_{\delta}(\partial\mathbb{H})\right]=\frac{\mathbb{E}\left[F(\frac{\mu_{h_L}}{\nu_{h_L}(\partial\mathbb{H})^2}, \frac{\nu_{h_L}}{\nu_{h_L}(\partial\mathbb{H})})\nu_{h_L}(\partial\mathbb{H})^\frac{2Q-3\gamma}{\gamma}\right]}{\mathbb{E}\left[\nu_{h_L}(\partial\mathbb{H})^\frac{2Q-3\gamma}{\gamma}\right]},
\end{equation}
which is the expression defining the law of $(\mu^{UBL}_{HRV},\nu^{UBL}_{HRV})$.
However we will start by proving this result when considering the modified scheme which consists in sampling $\hat{w}^{\varepsilon}$ under the weighted probability measure $\hat{\mathbb{P}}^{\varepsilon}_{\delta}$ defined by $d\mathbb{\hat{P}}^{\varepsilon}_{\delta}\propto\nu_{h_{\varepsilon}}(\partial\mathbb{H})d\mathbb{P}^{\varepsilon}_{\delta}$ rather than $\mathbb{P}^{\varepsilon}_{\delta}$. We will prove in Subsection~\ref{subsection:final} that this is enough to prove that Equation~\eqref{eq:convergence} holds true.

\subsubsection{Change induced by the perturbation of the procedure}

In order to study how this modification of the scheme affects the law of the random measures, we first recall some basic properties of \textit{rooted measures}, whose goal is to consider the law of the pair $(h,w)$ where $h$ is a distribution on $D$ and $w$ in $\partial D$ under the probability measure $\propto \nu_{h}(\partial D) d\mathbb{P}$. Studying the marginal and conditional laws of the two variables, the authors in \cite[Lemma A.7]{DMS14} proved that a sample from the weighted law $\propto\nu_{h^{\varepsilon}}(\partial\mathbb{H})d\mathbb{P}^\varepsilon$ can be produced by: \begin{itemize}
\item First sampling $h^\varepsilon$ according to its unweighted law.
\item Picking $w$ independently of $h^\varepsilon$ according to its marginal law and then shifting $h^\varepsilon$ by an additional factor $\frac{\gamma}{2} G_{D^\varepsilon}(\cdot,w)$, where $G_{D^\varepsilon}$ is the Green's kernel with zero (resp.free) boundary conditions on $\partial D^{\varepsilon}\cap\mathbb{H}$ (resp. $\partial D^{\varepsilon}\cap\partial\mathbb{H}$).
\end{itemize}
With the same arguments as in the proof of \cite[Lemma A.7]{DMS14}, we can show similarly that a sampling from the law $\mathbb{\hat{P}}^{\varepsilon}_{\delta}$ (that is with conditioning on the boundary length) can be produced by: 
\begin{itemize}
\item First sampling $h^\varepsilon$ according to its unweighted law (that is without conditioning the boundary length).
\item Picking $\hat w$ independently of $h^\varepsilon$ according to its marginal law and then shifting $h^\varepsilon$ by an additional factor $\frac{\gamma}{2} G_{D^\varepsilon}(\cdot,\hat w)$.
\item Conditioning on the event $E^{\varepsilon}_{\delta}(\partial\mathbb{H})$.
\end{itemize}
We explain in the proof of Theorem \ref{theorem_three} that by doing so (and since we condition on the event that $\nu_{h^{\varepsilon}}(\partial\mathbb{H})$ tends to $1$ as $\delta\rightarrow0$) we still get the same result in the limit. As a consequence in the rest of the document, instead of working with $h_0^{\varepsilon}$ being defined according to the unweighted law of the GFF $\mathbb{P}^{\varepsilon}_{\delta}$, we work with the field\footnote{We still denote this field by $h^\varepsilon$ in order not to overload the notations, but this field differs from the one considered in the previous section since is picked according to the weighted measure $\mathbb{\hat{P}}^{\varepsilon}_{\delta}$.} on $D^{\varepsilon}$ given by
\[
h^{\varepsilon}:= h_0^{\varepsilon}+\frac{\gamma}{2} G_{D^\varepsilon}(z,0) + \frac{1}{2}(2Q-\gamma)\log \varepsilon
\]
as described in the above procedure, and where $h_0^{\varepsilon}$ has the law of $\mathbb{\hat{P}}^{\varepsilon}_{\delta}$, that is a GFF on $D^{\varepsilon}$ with zero (resp. free) boundary conditions on $\partial D^{\varepsilon}\cap\mathbb{H}$ (resp. $\partial D^{\varepsilon}\cap\partial\mathbb{H}$) but \textbf{sampled under the weighted law $\propto\nu_{h^{\varepsilon}}(\partial\mathbb{H})d\mathbb{P}_\delta^{\varepsilon}$}. 
We also introduce $h_L$, the GFF in $\mathbb{H}$ with log-singularities $(\gamma,\gamma,\gamma)$ at $(0,1,\infty)$, by considering as background measure $c$ the uniform probability measure on the semi-circle $\partial\mathbb{D}\cap\mathbb{H}$:
\begin{equation}\label{GFF_semi_circle}
h_L(z):=h_c(z) +\frac{\gamma}{2}(G_\mathbb{H}(z,0)+G_\mathbb{H}(z,1))+s\log\left(|z|\vee1\right)+\tilde{C},
\end{equation}
where recall that $s$ was defined in Equation~\eqref{def:s} and is given here by 
\[
s=\frac{3\gamma}{2}-Q,\]
and where $\tilde C$ is a constant (whose value is not relevant for our purpose) chosen so that the field $h_L$ has zero-mean under the background measure $c$.
To define this one can start with the GFF with log-singularities $(\gamma,\gamma,\gamma)$ at $(-1,-i,1)$ and background measure $l(z):=\frac{1}{\pi}\frac{2}{|1-z|^2}\mathds{1}_{z\in[-i,i]}$ in $\mathbb{D}$:
\[
h_l -s m_{l}(G_\mathbb{D}(z,\cdot)) + \frac{\gamma}{2}(G_\mathbb{D}(z,-1)+G_\mathbb{D}(z,-i)+G_\mathbb{D}(z,1))+C,
\]
and then apply the rule for changing domains \ref{CCD} with the conformal transformations between $\mathbb{D}$ and $\mathbb{H}$ given by 
\[\phi^{-1}(z):=i\frac{1+z}{1-z}\quad\text{with inverse}\quad\phi(z)=\frac{z-i}{z+i}.
\]
Again $C$ is such that the above field has zero-mean under the background measure $l$, and we have used Equations~\eqref{def:green_rho} and~\eqref{GFF_L} to write down the expression of the field on $\mathbb{D}$.
To see why one can go from this field to the one described in Equation~\eqref{GFF_semi_circle}, note that these conformal mappings are such that for any $y\neq 1$,
\[G_{\mathbb{D}}(\phi(z),y)=G_{\mathbb{H}}(z,\phi^{-1}(y))+2\log |z+i|-2\log|y-1| \quad\text{and}\quad \log|\phi'(z)|=-2\log|z+i|.
\]
As a consequence and using the fact that $\frac{1}{\pi}\int_{-1}^{1} G_{\mathbb{H}}(z,\phi^{-1}(y))|(\phi^{-1})'(y)|^2 dy=-\log\left(|z|\vee1\right)$ we recover the expression~\eqref{GFF_semi_circle}.
In the sequel, we will use the shorthand $g(z):=-\log\left(|z|\vee1\right)$.

We are now ready to quantify how the modification of the scheme affects the law of the random measures:
\begin{prop}[Approximation by sampling]\label{Sampling1}
\hspace{0.5cm}\\
Let $\hat{w}^{\varepsilon}$ be a sample on $\frac{1}{\sqrt{\varepsilon}}\mathbb{D}\cap\partial\mathbb{H}$ from $\nu_{h^{\varepsilon}}$, set $d_\varepsilon:=\frac{1}{|\hat{w}^{\varepsilon}|^2\varepsilon}$ and conformally map $(\mathbb{H},0,\hat w^{\varepsilon},\infty)$ to $(\mathbb{H},0,1,\infty)$\footnote{When $\hat{w}^\varepsilon$ is negative it would be more precise to say that we conformally map $(\mathbb{H},\hat w^{\varepsilon},0,\infty)$ to $(\mathbb{H},0,1,\infty)$}. 
Then the quantum surface thus defined has the law of $(\mathbb{H},\hat{h}^\varepsilon,0,1,\infty)$, where 
\[\hat{h}^{\varepsilon}:=\hat h_L^{\varepsilon}+A_{\varepsilon}\frac{g(z)}{\log d_{\varepsilon}}+A_{\varepsilon} + r_{\varepsilon}(z).
\]
In this expression $\hat h_L^{\varepsilon}$ is defined by Equation \eqref{GFF_semi_circle} with Dirichlet boundary conditions in $\hat D^\varepsilon$\footnote{More precisely one needs to consider $h_c^\varepsilon$ to be a GFF in $\hat D^\varepsilon:=D^{d_\varepsilon}$ with with zero (resp. free) boundary conditions on $\partial \hat D^\varepsilon\cap\mathbb{H}$ (resp. $\partial \hat D^\varepsilon\cap\partial\mathbb{H}$) conditioned to have zero mean on $\partial\mathbb{D}\cap\mathbb{H}$ instead of $h_c$.}, $A_{\varepsilon}$ is the mean value of $\hat{h}^{\varepsilon}-r_\varepsilon$ on the semi-circle $\partial\mathbb{D}\cap\mathbb{H}$, and $r_{\varepsilon}(z)$ is harmonic and such that the law of $\frac{r_{\varepsilon}}{|\log\varepsilon|^{2/3}}$ converges (in law) uniformly to zero on every compact subset of $\mathbb{H}$.
\end{prop}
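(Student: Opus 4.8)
The plan is to track precisely how the M\"obius transform $z\mapsto \hat w^\varepsilon z$ acts on the field $h^\varepsilon$ via the change-of-coordinates rule \eqref{CCD2}, and then to identify the resulting field with the target expression for $\hat h_L^\varepsilon$ up to harmonic error terms. First I would write out $h^\varepsilon$ explicitly: it is $h_0^\varepsilon + \frac{\gamma}{2} G_{D^\varepsilon}(z,0) + \frac12(2Q-\gamma)\log\varepsilon$, where $h_0^\varepsilon$ is sampled under the weighted law $\propto\nu_{h^\varepsilon}(\partial\mathbb{H})d\mathbb{P}^\varepsilon_\delta$, so by the rooted-measure description recalled just before the statement this is the same as sampling $h_0^\varepsilon$ under the \emph{unweighted} GFF law and adding the extra log-singularity $\frac{\gamma}{2}G_{D^\varepsilon}(\cdot,\hat w^\varepsilon)$. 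Thus $h^\varepsilon = h_0^\varepsilon + \frac{\gamma}{2}G_{D^\varepsilon}(z,0) + \frac{\gamma}{2}G_{D^\varepsilon}(z,\hat w^\varepsilon) + \frac12(2Q-\gamma)\log\varepsilon$, with $h_0^\varepsilon$ now an ordinary GFF on $D^\varepsilon$. Applying $\psi^{-1}(z)=\hat w^\varepsilon z$ (which maps $D^{d_\varepsilon}=\frac1{\sqrt{d_\varepsilon}}\mathbb{D}\cap\mathbb{H}$ onto $D^\varepsilon=\frac1{\sqrt\varepsilon}\mathbb{D}\cap\mathbb{H}$, using $d_\varepsilon=\frac1{|\hat w^\varepsilon|^2\varepsilon}$), we get the pushed-forward field $h^\varepsilon\circ\psi^{-1}+Q\log|(\psi^{-1})'| = h^\varepsilon(\hat w^\varepsilon z) + Q\log|\hat w^\varepsilon|$ on $D^{d_\varepsilon}$.

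Next I would simplify each summand. By conformal invariance of the GFF, $h_0^\varepsilon(\hat w^\varepsilon z)$ is a GFF on $D^{d_\varepsilon}$. For the Green's-function terms I would use the scaling identity \eqref{Green_scale}: $G_{D^\varepsilon}(x,y)=G_{\mathbb{H}}(x,y)-\log\varepsilon+r_\varepsilon(x,y)$, together with $G_{\mathbb{H}}(\hat w^\varepsilon x,\hat w^\varepsilon y)=G_{\mathbb{H}}(x,y)-\log|\hat w^\varepsilon|^2$ (scaling of the half-plane Green's kernel) and $G_{\mathbb{H}}(x,0)=-2\log|x|$. Carefully collecting the $\log|\hat w^\varepsilon|$, $\log\varepsilon$ and $\log d_\varepsilon$ contributions from the two singularities, the $Q\log|\hat w^\varepsilon|$ from the Jacobian, and the deterministic constant $\frac12(2Q-\gamma)\log\varepsilon$, and recalling $s=\frac{3\gamma}{2}-Q$, one should find that the constant parts reorganize into $\frac12(2Q-\gamma)\log d_\varepsilon$ plus the singularity $\frac{\gamma}{2}(G_{\mathbb{H}}(z,0)+G_{\mathbb{H}}(z,1))$ evaluated after the transform sends $\hat w^\varepsilon\mapsto 1$ (note $\hat w^\varepsilon/|\hat w^\varepsilon|=\pm1$, which is why one must allow the footnoted sign convention) — i.e.\ precisely the structure of \eqref{GFF_semi_circle} on the domain $\hat D^\varepsilon=D^{d_\varepsilon}$, up to the harmonic remainders $r_\varepsilon$ coming from \eqref{Green_scale}. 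The residual pieces that are not yet in the form of $\hat h_L^\varepsilon$ are: (i) the difference between having the background measure $c$ (uniform on $\partial\mathbb{D}\cap\mathbb{H}$) impose zero mean and whatever additive constant the raw computation produces — this difference is by definition a constant, namely $A_\varepsilon$, the mean of $\hat h^\varepsilon-r_\varepsilon$ on $\partial\mathbb{D}\cap\mathbb{H}$; and (ii) the mismatch between the $s\log(|z|\vee1)$ term of \eqref{GFF_semi_circle}, defined with a fixed cutoff at radius $1$, and the term $\log d_\varepsilon$-scale truncation that appears at radius $\sqrt{d_\varepsilon}$ in the finite domain. Writing $g(z)=-\log(|z|\vee 1)$, this second mismatch is exactly a multiple of $g(z)/\log d_\varepsilon$ with coefficient proportional to $A_\varepsilon$, once one matches the boundary value on $\partial\mathbb{D}\cap\mathbb{H}$ against the zero value forced at $\partial D^{d_\varepsilon}\cap\mathbb{H}$; hence the term $A_\varepsilon\,g(z)/\log d_\varepsilon$ in the statement. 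Collecting everything gives $\hat h^\varepsilon = \hat h_L^\varepsilon + A_\varepsilon\frac{g(z)}{\log d_\varepsilon} + A_\varepsilon + r_\varepsilon(z)$ as claimed.

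Finally I would verify the asserted size of the error term: $r_\varepsilon(z)$ is harmonic on $\mathbb{H}$ because it is a finite sum of the harmonic remainders from \eqref{Green_scale} (each $r_\varepsilon(\cdot,y)$ is harmonic) evaluated at the relevant singular points, together with harmonic corrections arising from replacing $D^{d_\varepsilon}$-Green's kernels by $\mathbb{H}$-Green's kernels. By \eqref{Green_scale} each such piece converges uniformly to zero on compacts, and since we divide by $|\log\varepsilon|^{2/3}\to\infty$ (and $\log d_\varepsilon$ is comparable to $\log\varepsilon$ up to the bounded-in-probability quantity $\log|\hat w^\varepsilon|^2$, controlled by Lemma~\ref{LOCS} which gives $\mathrm{Re}(w^\varepsilon)=o(|\log\varepsilon|^{2/3})$ hence $|\hat w^\varepsilon|$ is subpolynomial), the normalized remainder $r_\varepsilon/|\log\varepsilon|^{2/3}$ converges to zero uniformly on compacts, in law. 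The main obstacle, I expect, is bookkeeping: keeping the sign of $\hat w^\varepsilon$ straight, correctly absorbing all the additive constants into the single quantity $A_\varepsilon$ (which is defined circularly as a mean of $\hat h^\varepsilon$ itself, so one must check consistency), and confirming that the truncation-radius mismatch really does produce exactly $A_\varepsilon g(z)/\log d_\varepsilon$ and not some other harmonic profile — this last point requires noting that $g$ is the unique (up to scaling) harmonic-on-$\mathbb{H}\setminus\{0\}$, radially symmetric function vanishing on $\partial\mathbb{H}$, so matching two boundary values (at $|z|=1$ and at $|z|=\sqrt{d_\varepsilon}$) pins down the coefficient. The GMC/measure-level statement then follows because the change-of-coordinates rule \eqref{CCD2} is exactly compatible with the pushforward of the Liouville measures, as recalled after Theorem~\ref{LQG}.
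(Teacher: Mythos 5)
Your overall plan is the one the paper follows: rewrite $h^\varepsilon$ via the rooted-measure description to absorb the weighted law into an extra $\frac{\gamma}{2}G_{D^\varepsilon}(\cdot,\hat w^\varepsilon)$ singularity, push forward by the M\"obius map $z\mapsto\hat w^\varepsilon z$, simplify with the Green's function scaling identity \eqref{Green_scale}, and then identify the residual against $\hat h_L^\varepsilon$. The Green's-function bookkeeping and the treatment of the sign of $\hat w^\varepsilon$ are handled as in the paper.

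Where you diverge is the step that produces the $A_\varepsilon g(z)/\log d_\varepsilon + A_\varepsilon$ piece. The paper's proof passes from the GFF $\hat h_0^\varepsilon$ on $\hat D^\varepsilon$ to the field $\hat h_c^\varepsilon$ conditioned to have zero mean on $\partial\mathbb D\cap\mathbb H$ by invoking the \emph{circle-average orthogonal decomposition} of Proposition~\ref{OD2}: $\hat h_0^\varepsilon=\hat h_c^\varepsilon+\frac{\hat A_\varepsilon}{\mathrm{Var}(\hat A_\varepsilon)}\xi^\varepsilon$, with $\xi^\varepsilon(z)=-2\log(|z|\vee 1)+\log d_\varepsilon$ and $\mathrm{Var}(\hat A_\varepsilon)=\log d_\varepsilon$. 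That decomposition is what makes the coefficient of $g$ a genuine centred Gaussian $\hat A_\varepsilon$ (independent of $\hat h_c^\varepsilon$), and combining it with the deterministic $s\log d_\varepsilon$ term yields the shifted Gaussian $A_\varepsilon=\hat A_\varepsilon+s\log d_\varepsilon$ whose law is recorded in Remark~\ref{rmk:A} and used in the proof of Proposition~\ref{Approximation}. Your account of this step --- a ``truncation-radius mismatch'' between the $s\log(|z|\vee1)$ term and the domain cutoff, pinned down by uniqueness of the radial harmonic profile vanishing on $\partial\mathbb H$ --- is correct in spirit (the one-dimensional space $\Hr_\xi$ is indeed spanned by exactly that profile), but it does not by itself exhibit the coefficient as the independent Gaussian semicircle average, nor does it resolve the apparent circularity you flag in the definition of $A_\varepsilon$: that is resolved precisely by noting that $\hat h_L^\varepsilon$ has zero mean on $\partial\mathbb D\cap\mathbb H$, so that the mean of $\hat h^\varepsilon - r_\varepsilon$ on that semicircle is $\hat A_\varepsilon+s\log d_\varepsilon$, not by matching values at $|z|=\sqrt{d_\varepsilon}$ (your expression $A_\varepsilon g(z)/\log d_\varepsilon+A_\varepsilon$ does not in fact vanish there). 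Replacing the informal harmonic-profile argument by an explicit appeal to Proposition~\ref{OD2} would close the gap; the rest of your proposal, including the harmonicity and size estimate for $r_\varepsilon$ (for which the paper defers to Lemma~\ref{LOCS2} rather than Lemma~\ref{LOCS} directly), is essentially the paper's argument.
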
 

\begin{proof}
Since we are working with a field $h_0^{\varepsilon}$ considered under the weighted measure $\mathbb{\hat{P}}^{\varepsilon}_{\delta}$, we know that sampling $\hat{w}^{\varepsilon}$ is tantamount to considering the law of the Liouville measure of the field
 \[
 h_0^{\varepsilon}+\frac{\gamma}{2} G_{D^\varepsilon}(z,0) +\frac{\gamma}{2} G_{D^\varepsilon}(z,\hat{w}^{\varepsilon}) + \frac{2Q-\gamma}{2} \log \varepsilon
 \]
where $\hat{w}^{\varepsilon}$ is chosen independently of $h_0^{\varepsilon}$ from its marginal law. Applying the conformal mapping $\hat\psi:z\mapsto \hat{w}^{\varepsilon}z$ if $\hat{w}^\varepsilon>0$ and $\hat\psi:z\mapsto -\hat{w}^{\varepsilon}(z-1)$ otherwise, the quantum surface $(\mathbb H,h^\varepsilon,0,\hat{w}^{\varepsilon},\infty)$ is equivalent to $(\mathbb H,\hat h^\varepsilon,0,1,\infty)$ with
\[\hat{h}^{\varepsilon}:=\hat{h}_0^{\varepsilon}+\frac{\gamma}{2} (G_{D^\varepsilon}(z,0) + G_{D^\varepsilon}(z,1)) + \frac{2Q-\gamma}{2} \log \varepsilon + Q\log |\hat{\psi}'|\] \[ +\frac{\gamma}{2} \left(G_{D^\varepsilon}(\hat{\psi}(z),\hat{\psi}(0)) - G_{D^\varepsilon}(z,0) + G_{D^\varepsilon}(\hat{\psi}(z),\hat{\psi}(1))-G_{D^\varepsilon}(z,1)\right)\]
where $\hat{h}_0^{\varepsilon}$ is a GFF similar to $h_0^\varepsilon$ but defined on the domain $\hat D^\varepsilon$.\\
Using the scaling property for the Green's functions \eqref{Green_scale} and the change under M\"obius transform yields 
\[
\hat{h}_0^{\varepsilon}+\frac{\gamma}{2} \left(G_{\mathbb{H}}(z,0) + G_{\mathbb{H}}(z,1)\right) + \frac{1}{2}(2Q-3\gamma)\log \varepsilon + r_{\varepsilon}(z),
\] 
where
\[
r_{\varepsilon}(z):=\frac{\gamma}{2} \left(-4\log\vert\hat w^\varepsilon\vert -2\log\vert1-\varepsilon(\hat{w}^{\varepsilon})^2z\vert\right).
\]
We will show in Lemma \ref{LOCS2} that $r_{\varepsilon}$ satisfies the assumptions of the proposition.
Now by the orthogonal decomposition (Lemma \ref{OD2}) for the GFF we can write that
 \[
 \hat{h}_0^{\varepsilon}=\hat{h}_c^{\varepsilon}+\frac{\hat A_{\varepsilon}}{Var(\hat A_{\varepsilon})}\xi^{\varepsilon}(z)
 \]
where $\xi^{\varepsilon}(z)=-2\log\left(|z|\vee 1\right)+\log d_{\varepsilon}$ like in Lemma \ref{OD2}, $\hat A_{\varepsilon}:=(\hat h^\varepsilon_0,\rho_{\varepsilon})$ is Gaussian with mean zero and variance $\log d_{\varepsilon}$, and is independent of $\hat{h}_c^{\varepsilon}$ defined to have the law of $\hat{h}_0^{\varepsilon}$ conditioned to have zero mean on the semi-circle $\partial\mathbb{D}\cap\mathbb{H}$. 
As a consequence we get that 
\[
\hat{h}^{\varepsilon}=\hat{h}_{L}^{\varepsilon} + \left( \hat A_{\varepsilon}+s\log d_{\varepsilon}\right)\frac{\log\left(|z|\vee 1\right)}{\log d_{\varepsilon}} + \left(\hat A_{\varepsilon} + s\log d_{\varepsilon}\right) + r_{\varepsilon}(z).
\]
We will show in Lemma \ref{LOCS2} that $r_{\varepsilon}(z)$ is as desired, so to finish up with the proof, we note that
$A_{\varepsilon}:=\hat A_{\varepsilon}+s\log d_{\varepsilon}$ does indeed coincide with the mean value on the semi-circle $\partial\mathbb{D}\cap\mathbb{H}$ of the field $\hat{h}^{\varepsilon}$ (up to the remainder $r_\varepsilon$) since $\hat{h}_{L}^{\varepsilon}$ has zero mean on $\partial\mathbb{D}\cap\mathbb{H}$.
\end{proof}
\begin{rmk}\label{rmk:A}
From the proof of Proposition \ref{Sampling1} we shed light on the fact that $A_\varepsilon$ is Gaussian with mean $s\log d_{\varepsilon}$ and variance $\log d_{\varepsilon}$; besides it is independent of $h_L^{\varepsilon}$.
\end{rmk}

\subsubsection{Limiting law for the procedure}
Now that we have seen how this perturbation affects the law of the measures, we work with the expression obtained \[\hat{h}^{\varepsilon}=\hat{h}_L^{\varepsilon}+A_{\varepsilon}\frac{g(z)}{\log d_{\varepsilon}}+A_{\varepsilon} + r_{\varepsilon}(z)\]
and study the limiting object this field gives rise to. The following statement aims to prove that thanks to the conditioning on the boundary length, this field will converge weakly in law toward a weighted Gaussian field.

\begin{prop}[Limiting law for the field]\label{Approximation}
\hspace{0.5cm}\\
Let $h_L$ be defined by Equation \eqref{GFF_semi_circle}. Then there exists a sequence $(C^{\varepsilon}_{\delta})_{\varepsilon,\delta>0}$ such that for any $F$ non-negative bounded continuous (with the topology of weak convergence) functional on $\Hr^{-1}(\mathbb{H})$ which is invariant by adding a constant\footnote{This simply means that for any complex $C$ and field $h$, $F(h+C)=F(h)$.} we have \begin{equation}
\lim\limits_{\delta\rightarrow 0} \lim\limits_{\varepsilon\rightarrow 0}  C^{\varepsilon}_{\delta} \mathbb{E}\left[F(\hat h^{\varepsilon})\mathds{1}_{E^{\varepsilon}_{\delta}(\partial\mathbb{H})}\mathds{1}_{H^{\varepsilon}} \right]=\mathbb{E}\left[F(h_L)\nu_{h_L}^{-\frac{2s}{\gamma}}(\partial\mathbb{H})\right],
\end{equation}
where $H^{\varepsilon}:=\lbrace A_{\varepsilon}-s\log\varepsilon\geq - |\log\varepsilon|^{2/3}\rbrace$.
\end{prop}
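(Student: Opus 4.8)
The plan is to start from the decomposition established in Proposition~\ref{Sampling1},
\[
\hat h^{\varepsilon}=\hat h_L^{\varepsilon}+A_{\varepsilon}\frac{g(z)}{\log d_{\varepsilon}}+A_{\varepsilon}+r_{\varepsilon}(z),
\]
and to recognise that on the event $H^{\varepsilon}$ (and with $A_\varepsilon$ Gaussian of mean $s\log d_\varepsilon$ and variance $\log d_\varepsilon$ by Remark~\ref{rmk:A}) the behaviour is governed by the single Gaussian variable $A_\varepsilon$. Since $F$ is invariant under adding a constant, the term $A_\varepsilon$ drops out, so $F(\hat h^{\varepsilon})=F(\hat h_L^{\varepsilon}+A_{\varepsilon}\frac{g(z)}{\log d_{\varepsilon}}+r_\varepsilon(z))$. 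First I would argue that, on $H^\varepsilon$ and after conditioning on $E^\varepsilon_\delta(\partial\mathbb H)$, one has $\frac{A_\varepsilon}{\log d_\varepsilon}\to 0$ in probability — indeed $d_\varepsilon=\frac{1}{|\hat w^\varepsilon|^2\varepsilon}$ and by Lemma~\ref{LOCS} the location $\mathrm{Re}(w^\varepsilon)$ (hence $\log|\hat w^\varepsilon|$) is $o(|\log\varepsilon|^{2/3})$, so $\log d_\varepsilon=|\log\varepsilon|(1+o(1))$; meanwhile $A_\varepsilon-s\log d_\varepsilon$ is centred Gaussian with variance $\sim|\log\varepsilon|$, so $A_\varepsilon=s\log d_\varepsilon+O_{\mathbb P}(\sqrt{|\log\varepsilon|})$ and $\frac{A_\varepsilon}{\log d_\varepsilon}\to s\cdot 0$... wait, that gives $\to 0$ only if $s=0$. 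The correct statement is $\frac{A_\varepsilon g(z)}{\log d_\varepsilon}= s\,g(z)+o_{\mathbb P}(1)$, and since $h_L$ in~\eqref{GFF_semi_circle} already contains the term $s\log(|z|\vee 1)=-s\,g(z)$... so actually the shift $A_\varepsilon\frac{g(z)}{\log d_\varepsilon}$ converges to $s\,g(z)$, which must be absorbed correctly. I would therefore track the constant $\tilde C$ and the $g$-term carefully so that $\hat h_L^\varepsilon + A_\varepsilon\frac{g(z)}{\log d_\varepsilon}$ converges in law (in $\Hr^{-1}$) to $h_L$ together with the right singular profile, using that $\hat h_c^\varepsilon$ converges to $h_c$ by domain monotonicity of the GFF (the Markov property, letting the domain $\hat D^\varepsilon\uparrow\mathbb H$), and that $r_\varepsilon/|\log\varepsilon|^{2/3}\to 0$ uniformly on compacts, hence $r_\varepsilon\to 0$ in $\Hr^{-1}$ after the constant part is removed.

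The core computation is then a Gaussian change of measure (Cameron–Martin / Girsanov applied to the single variable $A_\varepsilon$ together with its coupling to the boundary GMC). Conditioning on $E^\varepsilon_\delta(\partial\mathbb H)=\{\nu_{h^\varepsilon}(\partial\mathbb H)\in[e^{-\gamma\delta},e^{\gamma\delta}]\}$, I would write $\nu_{\hat h^\varepsilon}(\partial\mathbb H)=e^{\frac{\gamma}{2}(A_\varepsilon\frac{g}{\log d_\varepsilon}+A_\varepsilon+r_\varepsilon)}$-weighted integral of $\nu_{\hat h_L^\varepsilon}$; on $\partial\mathbb H$ the factor $g$ is essentially $-\log|z|$ away from the marked points and the dominant piece is $e^{\frac{\gamma}{2}A_\varepsilon}$, so $\nu_{\hat h^\varepsilon}(\partial\mathbb H)\approx e^{\frac{\gamma}{2}A_\varepsilon}\nu_{h_L^{(\varepsilon)}}(\partial\mathbb H)$ up to lower-order corrections. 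Conditioning $A_\varepsilon$ (Gaussian, mean $s\log d_\varepsilon$, variance $\log d_\varepsilon$) on $e^{\frac{\gamma}{2}A_\varepsilon}\nu\in[e^{-\gamma\delta},e^{\gamma\delta}]$ pins $A_\varepsilon\approx -\frac{2}{\gamma}\log\nu$, and the Gaussian density evaluated at that point produces exactly the Radon–Nikodym factor $\nu^{-2s/\gamma}$ after the divergent Gaussian normalisation is swept into the constant $C^\varepsilon_\delta$ — this is where the exponent $\frac{2s}{\gamma}$ with $s=\frac{3\gamma}{2}-Q$, i.e. $\frac{2Q-3\gamma}{\gamma}$, appears. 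Concretely, $\mathbb E[F(\hat h^\varepsilon)\mathbf 1_{E^\varepsilon_\delta}\mathbf 1_{H^\varepsilon}]=\mathbb E\big[F(h_L^{(\varepsilon)})\,\mathbb P(A_\varepsilon\in I_\delta(\nu)\mid h_L^{(\varepsilon)})\big]$ by independence of $A_\varepsilon$ and $h_L^\varepsilon$ (Remark~\ref{rmk:A}), and $\mathbb P(A_\varepsilon\in I_\delta(\nu))=\frac{1}{\sqrt{2\pi\log d_\varepsilon}}\int_{I_\delta(\nu)}e^{-\frac{(a-s\log d_\varepsilon)^2}{2\log d_\varepsilon}}da$; expanding the quadratic at $a=-\frac{2}{\gamma}\log\nu$ gives a leading constant (absorbed into $C^\varepsilon_\delta$) times $e^{-s\cdot(-\frac{2}{\gamma}\log\nu)}\cdot(1+o(1))=\nu^{2s/\gamma}\cdot(1+o(1))$; the sign works out to $\nu^{-2s/\gamma}$ once one also accounts for the $2\gamma\delta$-width of $I_\delta$ and the Jacobian $\frac{2}{\gamma}$, and the $\delta\to0$ limit then removes the interval-width factor. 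The indicator $\mathbf 1_{H^\varepsilon}$ restricts to the range where this Laplace-type expansion is uniform, and by the second estimate of Lemma~\ref{LOCS} it has conditional probability tending to $1$, so it can be inserted for free.

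I would organise the argument as: (i) fix $F$; (ii) use Proposition~\ref{Sampling1} and the invariance of $F$ under constants to replace $\hat h^\varepsilon$ by $\hat h_L^\varepsilon+A_\varepsilon\frac{g}{\log d_\varepsilon}+r_\varepsilon$; (iii) condition on $h_L^\varepsilon$ and compute the Gaussian probability of $E^\varepsilon_\delta\cap H^\varepsilon$ in the variable $A_\varepsilon$ using Remark~\ref{rmk:A}, extracting the factor $\nu_{h_L}(\partial\mathbb H)^{-2s/\gamma}$ and identifying $C^\varepsilon_\delta$ with the divergent Gaussian normalisation times the $\delta$-width; (iv) pass to the limit $\varepsilon\to0$ using convergence of $\hat h_L^\varepsilon\to h_L$ in $\Hr^{-1}$, of $\nu_{\hat h_L^\varepsilon}(\partial\mathbb H)\to\nu_{h_L}(\partial\mathbb H)$ (GMC continuity, as in Theorem~\ref{LQG}), of $r_\varepsilon\to0$, and of $\frac{g}{\log d_\varepsilon}A_\varepsilon$ — using Lemma~\ref{LOCS} to control $\log d_\varepsilon\sim|\log\varepsilon|$ and the restriction to $H^\varepsilon$; (v) finally send $\delta\to0$. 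The main obstacle I anticipate is step (iii)–(iv) done jointly: one cannot simply condition on $h_L^\varepsilon$ and $A_\varepsilon$ separately because $\nu_{\hat h^\varepsilon}(\partial\mathbb H)$ couples them through the $g$-weighted GMC, and the corrections $r_\varepsilon$ and $A_\varepsilon\frac{g}{\log d_\varepsilon}$ both perturb $\nu$ at the same logarithmic order that one is trying to resolve; making the Laplace expansion of the Gaussian density uniform over the event $H^\varepsilon$ while controlling these GMC perturbations (in particular showing the $g$-weighting of $\nu$ on $\partial\mathbb H$ contributes only to $C^\varepsilon_\delta$ and not to the $F$-dependent factor, using that $g$ vanishes on $\partial\mathbb D\cap\mathbb H$ and the scaling of the Bessel/Brownian radial part) is the delicate part, and is presumably where Lemma~\ref{LOCS} and the choice of the maximal embedding are essential.
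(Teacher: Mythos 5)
Your proposal follows essentially the same route as the paper: decompose $\hat h^\varepsilon$ via Proposition~\ref{Sampling1}, use invariance of $F$ under additive constants to drop $A_\varepsilon$ from $F$, exploit independence of $A_\varepsilon$ and $\hat h_L^\varepsilon$ (Remark~\ref{rmk:A}), and perform a Laplace-type Gaussian computation in the single variable $A_\varepsilon$ that produces the factor $\nu_{h_L}(\partial\mathbb H)^{-2s/\gamma}$ together with a divergent normalization absorbed into $C^\varepsilon_\delta$. The paper implements exactly this by choosing $C^\varepsilon_\delta = \frac{\sqrt{-2\pi\log\varepsilon}}{2\delta}$, first applying a Girsanov tilt $d\mathbb Q^\varepsilon = e^{-s(A_\varepsilon+s\log d_\varepsilon)}d\hat{\mathbb P}^\varepsilon_\delta$ so that $A_\varepsilon$ becomes centred, then integrating over its values and splitting the domain at $\pm|\log\varepsilon|^{2/3}$, with Lemma~\ref{CIU} supplying both the pointwise convergence and the dominating bound.

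One genuine confusion in your write-up deserves flagging. In the first paragraph you use the \emph{unconditional} law of $A_\varepsilon$ (mean $s\log d_\varepsilon$, variance $\log d_\varepsilon$) to conclude $\frac{A_\varepsilon g}{\log d_\varepsilon}\to s g(z)$, and you then speculate that this nonvanishing shift ``must be absorbed correctly'' into the $g$-term of $h_L$. That would actually be wrong: $\hat h_L^\varepsilon$ already carries the term $s\log(|z|\vee 1)=-s g(z)$, so adding $s g(z)$ would cancel the singular profile rather than produce $h_L$. The correct resolution --- which you in fact use implicitly two paragraphs later --- is that the relevant regime is the one \emph{conditional} on $E^\varepsilon_\delta(\partial\mathbb H)$, where the conditioning pins $A_\varepsilon\approx -\frac{2}{\gamma}\log\nu_{h_L}(\partial\mathbb H)=O_{\mathbb P}(1)$, and then $\frac{A_\varepsilon g}{\log d_\varepsilon}\to 0$ as desired. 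The paper makes this transparent by first applying the Girsanov tilt to recentre $A_\varepsilon$ (after which the perturbation is $\frac{x}{\log d_\varepsilon}g$ with $x$ effectively bounded on the dominated-convergence range) rather than reasoning about conditional pinning informally. Your core computation paragraph does otherwise land on the right identity (modulo the sign typo $e^{-sa}$ vs.\ $e^{sa}$, which you already flag and which indeed resolves to $\nu^{-2s/\gamma}$), and the ``main obstacle'' you identify at the end --- uniform control of the GMC perturbation $\nu_{h_L^\varepsilon+a_\varepsilon g}$ as $a_\varepsilon\to 0$ --- is precisely what Lemma~\ref{CIU} is designed to supply.
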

\begin{proof}
Recall that in Remark~\ref{rmk:A} we described the law of $A_\varepsilon$. Therefore defining a probability measure by 
\[
d\mathbb{Q}^{\varepsilon}= \exp\left(-s(A_{\varepsilon}+s\log d_\varepsilon)\right)d\hat{\mathbb{P}}_\delta^{\varepsilon},
\]
by Girsanov theorem\footnote{Recall that $d_\varepsilon$ is defined via the point $\hat w^\varepsilon$, which was sampled according to its marginal law and therefore is independent from all the other random variables involved in the present proof.} under this new probability measure $A_{\varepsilon}$ remains Gaussian with same variance but mean zero, and $h^{\varepsilon}_{L}$ remains independent of $A_{\varepsilon}$ with same law as under $\hat{\mathbb{P}}_\delta^{\varepsilon}$.
Moreover since we condition on the event $\lbrace\nu_{\hat h_L^{\varepsilon}}(\partial\mathbb{H})\in[e^{-\gamma\delta},e^{\gamma\delta}]\rbrace$ one has
\[
\nu_{\hat h_L^{\varepsilon} + A_{\varepsilon}}(\partial\mathbb{H})^{-\frac{2s}\gamma}\exp(sA_{\varepsilon})=1+o(1),
\] so proving the result is tantamount to showing that \[\lim\limits_{\delta\rightarrow 0} \lim\limits_{\varepsilon\rightarrow 0}  C^{\varepsilon}_{\delta} \mathbb{E}^{\mathbb{Q}^{\varepsilon}}\left[F(\hat h^{\varepsilon})\nu_{\hat h_L^{\varepsilon}+\frac{A_{\varepsilon}g}{\log d_\varepsilon}+r_\varepsilon}(\partial\mathbb{H})^{-\frac {2s}\gamma}\mathds{1}_{E^{\varepsilon}_{\delta}(\partial\mathbb{H})}\mathds{1}_{H^{\varepsilon}} \right]=\mathbb{E}\left[F(h_L)\nu_{h_L}^{-\frac{2s}\gamma}(\partial\mathbb{H})\right].\]
Using Proposition \ref{Sampling1} and anticipating on Lemma \ref{CIU} we see that, without loss of generality, we can remove the terms $r_\varepsilon$ and $d_\varepsilon$ and only keep working with the field $h_L^{\varepsilon}+\frac{A_{\varepsilon}g}{-\log \varepsilon}$ instead of $\hat h_L^{\varepsilon}+\frac{A_{\varepsilon}g}{\log d_\varepsilon}+r_\varepsilon$.
We may then condition on all the possible values for $A_{\varepsilon}$ and write the left-hand side in the limit as \[\frac{C_{\varepsilon}^{\delta}}{\sqrt{-2\pi \log\varepsilon}}\int_{(-|\log\varepsilon|^{2/3},|\log\varepsilon|^{2/3})}\mathbb{E}\left[F(h_{L}^{\varepsilon}+\frac{x}{\log d_\varepsilon} g+r_\varepsilon)\nu_{h_{L}+\frac{x}{\log d_\varepsilon}g+r_\varepsilon}(\partial\mathbb{H})^{-\frac{2s}{\gamma}}\mathds{1}_{E_{\delta}^{\varepsilon}(x)}\right]\exp\left(-\frac{(x-s\log\frac{d_\varepsilon}{\varepsilon})^2}{2|\log \varepsilon|}\right)dx\]
\[+ \frac{C_{\varepsilon}^{\delta}}{\sqrt{-2\pi \log\varepsilon}}\int_{(|\log\varepsilon|^{2/3},\infty)}\mathbb{E}\left[F(h_{L}^{\varepsilon}+\frac{x}{\log d_\varepsilon}g+r_\varepsilon )\nu_{h_{L}^{\varepsilon}+\frac{x}{\log d_\varepsilon}g+r_\varepsilon}(\partial\mathbb{H})^{-\frac{2s}{\gamma}}\mathds{1}_{E_{\delta}^{\varepsilon}(x)}\right]\exp\left(-\frac{(x-s\log\frac{d_\varepsilon}{\varepsilon})^2}{2|\log \varepsilon|}\right)dx\]
where $\mathds{1}_{E_{\delta}^{\varepsilon}(x)}=\lbrace\nu_{h_{L}^{\varepsilon}+\frac x{\log d_\varepsilon} g}(\partial\mathbb{H})\in[e^{\gamma(-x-\delta)},e^{\gamma(-x+\delta)}]\rbrace$, and where the domain $(-\infty,\vert\log\varepsilon\vert^{\frac23})$ does not appear since we condition on the event $H^{\varepsilon}$. Let us now choose $C_{\varepsilon}^{\delta}$ to be given by $\frac{\sqrt{-2\pi \log\varepsilon}}{2\delta}$; we then claim that the integral over $(|\log\varepsilon|^{2/3},\infty)$ vanishes in the $\varepsilon\rightarrow 0$ limit, while the integral over $(-|\log\varepsilon|^{2/3},|\log\varepsilon|^{2/3})$ will converge to $\left[F(h_{L})\nu_{h_{L}}(\partial\mathbb{H})^{-\frac{2s}\gamma}\right]$. This would yield the desired result.

Let us start with the integral over $(|\log\varepsilon|^{2/3},\infty)$. On this domain the integrand can be uniformly bounded by
\begin{align}\label{Case1}
&\mathbb{E}\left[F(h_{L}^{\varepsilon}+\frac{x}{\log\varepsilon}g)\nu_{h_{L}^{\varepsilon}+\frac{x}{\log\varepsilon}g}(\partial\mathbb{H})^{-\frac{2s}\gamma}\mathds{1}_{E_{\delta}^{\varepsilon}(x)}\right]\leq ||F||_{\infty}\mathbb{E}\left[\nu_{h_{L}^{\varepsilon}}(\mathbb{D}\cap \partial\mathbb{H})^{-\frac{2s}\gamma}\right] \quad \text{if }s>0,
\end{align}
\begin{align*}
&\mathbb{E}\left[F(h_{L}^{\varepsilon}+\frac{x}{\log\varepsilon}g)\nu_{h^{\varepsilon}_{L}+\frac{x}{\log d_\varepsilon}g}(\partial\mathbb{H})^{-\frac{2s}\gamma}\mathds{1}_{E_{\delta}^{\varepsilon}(x)}\right]\leq||F||_{\infty}e^{2s (x+\delta)}\quad\text{otherwise.}
\end{align*}
Since the first expression is uniformly bounded in $\varepsilon$ by Lemma \ref{CIU}, the integral does indeed vanish.

We next turn to the integral over $(-|\log\varepsilon|^{2/3},|\log\varepsilon|^{2/3})$. Thanks to Lemma~\ref{CIU} below (with the deterministic sequence given by $x/\log \varepsilon$) we already know that the integrand converges pointwise, by which we mean that for any $x\in\mathbb{R}$, 
\[ \lim\limits_{\varepsilon\rightarrow0}
\mathds{1}_{x\in(-|\log\varepsilon|^{2/3},|\log\varepsilon|^{2/3})}\mathbb{E}\left[F(h_{L}^{\varepsilon}+\frac{x}{\log\varepsilon}g)
\nu_{h_{L}^{\varepsilon}+\frac{x}{\log\varepsilon}g}(\partial\mathbb{H})^{-\frac{2s}\gamma}\mathds{1}_{E_{\delta}^{\varepsilon}(x)}\right]\exp\left(-\frac{x^2}{2|\log \varepsilon|}\right)\]
\[=\mathbb{E}\left[F(h_{L})\nu_{h_{L}}(\partial\mathbb{H})^{-\frac{2s}\gamma}\mathds{1}_{E_{\delta}(x)}\right].\]
Assuming for a moment that the dominated convergence theorem can be applied, we can use Fubini theorem to write that
\begin{align*}
\int_{\mathbb{R}}\mathbb{E}\left[F(h_{L})\nu_{h_{L}}(\partial\mathbb{H})^{-\frac{2s}\gamma}\mathds{1}_{E_{\delta}(x)}\right]dx&=\mathbb{E}\left[F(h_{L})\nu_{h_{L}}(\partial\mathbb{H})^{-\frac{2s}\gamma}\int_{\mathbb{R}}\mathds{1}_{E_{\delta}(x)}dx\right]\\
&=2\delta\left[F(h_{L})\nu_{h_{L}}(\partial\mathbb{H})^{-\frac{2s}\gamma}\right]
\end{align*}
which would conclude the proof. Therefore all that is left to prove is that the dominated convergence theorem holds true for the integral over the domain $(-|\log\varepsilon|^{2/3},|\log\varepsilon|^{2/3})$.

Like before we distinguish between two cases depending on the sign of $s$; when $s$ is positive we can use the same reasoning as in Equation \eqref{Case1} to reduce the problem to proving that \[\lim\limits_{\varepsilon\rightarrow0} \int_{-\infty}^{+\infty} \mathbb{E}\left[\nu_{h_{L}^{\varepsilon}}(\partial\mathbb{H}\cap\mathbb{D})^{-\frac{2s}\gamma}\mathds{1}_{E_{\delta}^{\varepsilon}(x)}\right]dx=\int_{-\infty}^{+\infty} \lim\limits_{\varepsilon\rightarrow0} \mathbb{E}\left[\nu_{h_{L}^{\varepsilon}}(\partial\mathbb{H}\cap\mathbb{D})^{-\frac{2s}\gamma}\mathds{1}_{E_{\delta}^{\varepsilon}(x)}\right]dx.\] 
To see why this is indeed true note that by Fubini theorem (first and third equality) and Lemma \ref{CIU} (second and fourth equality) one has that
\begin{align*}
\lim\limits_{\varepsilon\rightarrow0} \int_{-\infty}^{+\infty} \mathbb{E}\left[\nu_{h_{L}^{\varepsilon}}(\partial\mathbb{H}\cap\mathbb{D})^{-\frac{2s}\gamma}\mathds{1}_{E_{\delta}^{\varepsilon}(x)}\right]dx&=\lim\limits_{\varepsilon\rightarrow0} \mathbb{E}\left[\nu_{h_{L}^{\varepsilon}}(\partial\mathbb{H}\cap\mathbb{D})^{-\frac{2s}\gamma}\left((Y_{\varepsilon}-X_{\varepsilon} +2\delta)\vee 0\right)\right]\\
&=2\delta\mathbb{E}\left[\nu_{h_{L}}(\partial\mathbb{H}\cap\mathbb{D})^{-\frac{2s}{\gamma}}\right]\\
&=\int_{-\infty}^{+\infty} \mathbb{E}\left[\nu_{h_{L}}(\partial\mathbb{H}\cap\mathbb{D})^{-\frac{2s}\gamma}\mathds{1}_{E_{\delta}(x)}\right]dx\\&=\int_{-\infty}^{+\infty} \lim\limits_{\varepsilon\rightarrow0}\mathbb{E}\left[\nu_{h_{L}^{\varepsilon}}(\partial\mathbb{H}\cap\mathbb{D})^{-\frac{2s}\gamma}\mathds{1}_{E_{\delta}^{\varepsilon}(x)}\right],
\end{align*}
where $Y_{\varepsilon}=\frac{1}{\gamma}\log \nu_{h_{L}^{\varepsilon}-|\log\varepsilon|^{-1/3}g(z)}(\partial\mathbb{H})$ and $X_{\varepsilon}=\frac{1}{\gamma}\log \nu_{h_{L}^{\varepsilon}+|\log\varepsilon|^{-1/3}g(z)}(\partial\mathbb{H})$. This shows the dominated convergence when $s$ is positive. Conversely if $s<0$ then the integrand is smaller than 
$||F||_{\infty}\mathbb{E}\left[\nu_{h_{L}^{\varepsilon}+|\log\varepsilon|^{-1/3} g(z)}(\partial\mathbb{H})^{-\frac{2s}\gamma}\mathds{1}_{E_{\delta}(x)}\right]$. We can proceed in the same way as in the case where $s\geq 0$, concluding the proof.
\end{proof}

\subsubsection{Convergence of the moments of the Liouville measure}To finish with, we justify the computations that we have made before by giving a convergence result for the moments of the boundary measure determined by $h_{L}$.
\begin{lemma}[Convergence in the $q^{th}$-moment of the boundary measure for the upper half-plane]\label{CIU}
Let $h_L$ be defined by Equation \eqref{GFF_semi_circle} and $a_{\varepsilon}$ be any deterministic sequence with limit $0$ as $\varepsilon$ goes to zero.
Then the $q$-moments of $\nu_{h_L+a_{\varepsilon}g}(\partial\mathbb{H})$ for $q<\frac{2}{\gamma}(Q-\gamma)$ converge to the ones of $\nu_{h_L}(\partial\mathbb{H})$ as $\varepsilon$ goes to $0$. Moreover for any $F$ non-negative bounded continuous (with the topology of weak convergence) functional over $\Hr^{-1}(\mathbb{H})$ we have 
\begin{equation}\label{eq:toprove}
\lim\limits_{\varepsilon\rightarrow 0}  \mathbb{E}\left[F(h_L+a_{\varepsilon}g) \nu_{h_L+a_{\varepsilon}g}(\partial\mathbb{H})^{-\frac {2s}\gamma}\right]=\mathbb{E}\left[F(h_L)\nu_{h_L}(\partial\mathbb{H})^{-\frac{2s}{\gamma}}\right].
\end{equation}
\end{lemma}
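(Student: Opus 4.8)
The statement has two parts: first, convergence of the $q$-th moments of $\nu_{h_L+a_\varepsilon g}(\partial\mathbb{H})$ for $q<\tfrac{2}{\gamma}(Q-\gamma)$; and second, the functional convergence \eqref{eq:toprove} involving the negative moment exponent $-\tfrac{2s}{\gamma}$. The key observation is that $g(z)=-\log(|z|\vee 1)$ is a \emph{bounded, continuous, deterministic} function on $\partial\mathbb{H}$ — indeed $g\equiv 0$ on $[-1,1]$ and $g(z)=-\log|z|$ for $|z|\ge 1$. Multiplying the field $h_L$ by $e^{\frac{\gamma}{2}a_\varepsilon g}$ therefore multiplies the boundary GMC measure $\nu_{h_L}$ by the bounded continuous density $e^{\frac{\gamma}{2}a_\varepsilon g(z)}$, which converges uniformly to $1$ as $\varepsilon\to 0$ since $a_\varepsilon\to 0$. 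So at the level of the measures we have the deterministic pointwise bound
\[
e^{-\frac{\gamma}{2}|a_\varepsilon|\,\|g\|_\infty}\,\nu_{h_L}(\partial\mathbb{H}) \;\le\; \nu_{h_L+a_\varepsilon g}(\partial\mathbb{H}) \;\le\; e^{\frac{\gamma}{2}|a_\varepsilon|\,\|g\|_\infty}\,\nu_{h_L}(\partial\mathbb{H}),
\]
except that $\|g\|_\infty=+\infty$ on the unbounded $\partial\mathbb{H}$; this is the one subtlety to handle. Away from a neighbourhood of $\infty$, $g$ is bounded, and the mass of $\nu_{h_L}$ near $\infty$ is controlled by the insertion of weight $\gamma$ at $\infty$ (equivalently the $s\log(|z|\vee 1)$ term in \eqref{GFF_semi_circle}), which under the Seiberg condition $\beta_j<Q$ ensures integrability.

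\textbf{Step 1: moment convergence.} First I would record that $\nu_{h_L}(\partial\mathbb{H})$ has finite $q$-th moments exactly for $q<\tfrac{2}{\gamma}(Q-\gamma)$ — this is the standard moment bound for boundary GMC with a $\gamma$-insertion at each of the three marked points, cited from \cite{HRV16} (the threshold $\tfrac{2}{\gamma}(Q-\beta_j)$ with $\beta_j=\gamma$). Then I would split $\partial\mathbb{H}=[-M,M]\cup\{|z|>M\}$ for large $M$. On $[-M,M]$ the density $e^{\frac{\gamma}{2}a_\varepsilon g}$ is between $e^{-\frac{\gamma}{2}|a_\varepsilon|\log M}$ and $1$, hence converges uniformly to $1$; on $\{|z|>M\}$ I would use that $e^{\frac{\gamma}{2}a_\varepsilon g(z)}=|z|^{-\frac{\gamma}{2}a_\varepsilon}\le 1$ for $\varepsilon$ small (say $a_\varepsilon\ge 0$; the other sign is symmetric after noting that $|z|^{-\frac{\gamma}{2}a_\varepsilon}\le |z|^{\frac{\gamma}{2}|a_\varepsilon|}$ and absorbing the polynomial growth into the $s\log(|z|\vee 1)$ term at the cost of shrinking $q$ slightly). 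A dominated-convergence argument then gives $\nu_{h_L+a_\varepsilon g}(\partial\mathbb{H})\to\nu_{h_L}(\partial\mathbb{H})$ almost surely along with an $L^p$-domination for $p$ slightly above $q$, yielding uniform integrability and hence $q$-th moment convergence. The tightest point is making the tail bound near $\infty$ uniform in $\varepsilon$; I would quantify it by choosing $M=M(\delta)$ so that $\mathbb{E}[\nu_{h_L}(\{|z|>M\})^{q'}]<\delta$ for some $q'>q$ and using Hölder.

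\textbf{Step 2: the functional convergence \eqref{eq:toprove}.} Here $F$ is bounded continuous on $\Hr^{-1}(\mathbb{H})$, so $F(h_L+a_\varepsilon g)\to F(h_L)$ for every realization, since $a_\varepsilon g\to 0$ in $\Hr^{-1}$ (it is a deterministic bounded function times a vanishing scalar). Combined with $\nu_{h_L+a_\varepsilon g}(\partial\mathbb{H})^{-2s/\gamma}\to\nu_{h_L}(\partial\mathbb{H})^{-2s/\gamma}$ a.s.\ from Step 1, the integrand converges a.s. For the passage to the limit I again split on sign of $s$: if $s<0$ then $-\tfrac{2s}{\gamma}>0$ is a positive moment exponent, and $-\tfrac{2s}{\gamma}=\tfrac{2}{\gamma}(Q-\tfrac{3\gamma}{2})<\tfrac{2}{\gamma}(Q-\gamma)$ lies below the threshold, so Step 1 applies directly and gives uniform integrability of $\nu_{h_L+a_\varepsilon g}(\partial\mathbb{H})^{-2s/\gamma}$; with $F$ bounded this upgrades a.s.\ convergence to convergence of expectations. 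If $s\ge 0$ then $-\tfrac{2s}{\gamma}\le 0$, so $\nu^{-2s/\gamma}\le \big(e^{-\frac{\gamma}{2}|a_\varepsilon|\log M}\nu_{h_L}([-M,M])\big)^{-2s/\gamma}$ on the event that most mass sits in $[-M,M]$; more robustly, using $\nu_{h_L+a_\varepsilon g}(\partial\mathbb{H})\ge \nu_{h_L+a_\varepsilon g}(\partial\mathbb{H}\cap\mathbb{D})\ge e^{-\frac{\gamma}{2}|a_\varepsilon|\,c}\nu_{h_L}(\partial\mathbb{H}\cap\mathbb{D})$ with $c=\|g\|_{L^\infty([-1,1])}=0$, in fact $\nu_{h_L+a_\varepsilon g}(\partial\mathbb{H}\cap\mathbb{D})=\nu_{h_L}(\partial\mathbb{H}\cap\mathbb{D})$ exactly because $g$ vanishes on $[-1,1]$! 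So for $s\ge 0$ the negative moment is bounded above by $\|F\|_\infty\,\mathbb{E}[\nu_{h_L}(\partial\mathbb{H}\cap\mathbb{D})^{-2s/\gamma}]$, a finite constant independent of $\varepsilon$ (finite by the moment bound applied to the restricted measure, which only feels the insertions at $0$ and $1$), and dominated convergence finishes the argument.

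\textbf{Main obstacle.} The only genuinely delicate point is the behaviour near $\infty$ on $\partial\mathbb{H}$, where $g$ is unbounded and where the weight $e^{\frac{\gamma}{2}a_\varepsilon g}$ perturbs the effective exponent of the $\infty$-insertion from $\gamma$ to $\gamma+\tfrac{\gamma}{2}a_\varepsilon$ (or $\gamma-\tfrac{\gamma}{2}a_\varepsilon$). Since $a_\varepsilon\to 0$ this stays strictly below $Q$ for $\varepsilon$ small, so the GMC mass near $\infty$ remains finite and the moment thresholds move continuously; making this quantitative — i.e.\ producing an $\varepsilon$-uniform integrable dominating function — is where the real work lies, and it is handled by the split-and-Hölder scheme above together with the exact cancellation $\nu_{h_L+a_\varepsilon g}=\nu_{h_L}$ on $[-1,1]$ that neutralizes the negative-exponent case.
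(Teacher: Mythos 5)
Your proposal follows the same overall strategy as the paper and identifies the two key structural facts the argument turns on: that $g\equiv 0$ on $[-1,1]$ (so that $\nu_{h_L+a_\varepsilon g}$ and $\nu_{h_L}$ agree on $\partial\mathbb H\cap\mathbb D$, which disposes of the negative-moment case immediately), and that near $\infty$ the perturbation $a_\varepsilon g$ only shifts the effective $\infty$-insertion exponent by $O(a_\varepsilon)$, hence stays subcritical and can be absorbed by shrinking $q$ slightly. The paper reduces the claim (after noting total-variation convergence of the restricted measures on compacts) to two estimates — uniform boundedness of $\mathbb E[\nu_{h_L+a_\varepsilon g}(\partial\mathbb H)^q]$ and tightness $\lim_{R\to\infty}\limsup_\varepsilon\mathbb E[\nu_{h_L+a_\varepsilon g}(\partial\mathbb H\setminus R\mathbb D)^q]=0$ — and proves both via an explicit dyadic annulus decomposition $A_n=(-re^n,-re^{n-1})\cup(re^{n-1},re^n)$, using the radial/angular decomposition of the free-boundary GFF together with Kahane's convexity inequality to obtain the exponential bound $\mathbb E[\nu_{h_L+a_\varepsilon g}(A_n)^q]\leq Ce^{-nb}$ uniformly in $\varepsilon$. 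Your single-cutoff-plus-H\"older sketch goes in the same direction but is left at a higher level; in particular, the step where you "quantify by choosing $M(\delta)$ so that $\mathbb E[\nu_{h_L}(\{|z|>M\})^{q'}]<\delta$" is stated only for the unperturbed measure, and you would still need to produce an $\varepsilon$-uniform dominating measure (which you gesture at via $|z|^{\gamma|a_\varepsilon|/2}\le|z|^{\gamma c/2}$, the same device the paper uses with its auxiliary constant $c$). So the approach is sound and essentially equivalent in content; the paper's annulus/Kahane route is simply more explicit about the uniform tail control, while yours is conceptually lighter but would need the tail estimate worked out to the same rigor. One small caveat: your claim that $F(h_L+a_\varepsilon g)\to F(h_L)$ because $a_\varepsilon g\to 0$ in $\Hr^{-1}(\mathbb H)$ deserves care since $g$ grows logarithmically at $\infty$; the paper sidesteps this by phrasing the convergence in terms of total variation of the fields restricted to compacts.
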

\begin{proof}
We first show that $h_L$ has a moment of order $q$ for $q<\frac{2}{\gamma}(Q-\gamma)$: this follows from the result \cite[Corollary 6.11]{HRV16} in the case of the disk with background measure $\rho_0$ the uniform one on the boundary, and by observing that $z\mapsto m_{\rho_0}(G(z,\cdot))-m_{l}(G(z,\cdot))$ is bounded, where $l$ is as before.
Pushing forward by $\psi(z)=\frac{z-i}{z+i}$ yields the result.

Now since on any compact of $\mathbb{H}$ the law of $h_L+a_{\varepsilon}g$ converges in total variation to that of $h_L$, we have that on any compact $\nu_{h_L+a_{\varepsilon}g}$ converges in total variation to $\nu_{h_L}$ as $\varepsilon\rightarrow0$ (in the sense of Radon measures). Therefore to show the following equality for $q<\frac{2}{\gamma}(Q-\gamma)$ (which implies both the convergence in the $q$-moments and Equation~\eqref{eq:toprove})
\begin{equation}\label{eq:toprove2}
\lim\limits_{\varepsilon\rightarrow 0}  \mathbb{E}\left[F(h_L+a_{\varepsilon}g) \nu_{h_L+a_{\varepsilon}g}(\partial\mathbb{H})^{q}\right]=\mathbb{E}\left[F(h_L)\nu_{h_L}(\partial\mathbb{H})^{q}\right]
\end{equation} it is enough to ensure that the expectation term
\begin{equation}\label{eq:bound}
\mathbb{E}\left[\nu_{h_L+a_{\varepsilon}g}(\partial\mathbb{H})^q\right]
\end{equation}
is uniformly bounded on $\varepsilon>0$ and that for any positive $q<\frac{2}{\gamma}(Q-\gamma)$,
\begin{equation}\label{eq:bound2}
\lim\limits_{R\rightarrow+\infty}\limsup\limits_{\varepsilon\rightarrow0}\mathbb{E}\left[\nu_{h_L+a_{\varepsilon}g}(\partial\mathbb{H}\setminus R\mathbb{D})^q\right]=0.
\end{equation}

In the case where the exponent $q$ is negative, the bound \[\mathbb{E}\left[\nu_{h_L+a_{\varepsilon}g}^q(\partial\mathbb{H})\right]\leq\mathbb{E}\left[\nu_{h_L+a_{\varepsilon}g}^q(\partial\mathbb{H}\cap\mathbb{D})\right]=\mathbb{E}\left[\nu_{h_L}^q(\partial\mathbb{H}\cap\mathbb{D})\right]\]
allows to conclude to bound uniformly Equation~\eqref{eq:bound}. 

Conversely if $0<q<\frac{2}{\gamma}(Q-\gamma)$ let us introduce for $r>1$, 
\[A_n:=(-re^n,-re^{n-1})\cup (re^{n-1}, re^n).
\]
We claim that for $\varepsilon$ small enough we have 
\begin{equation}\label{eq:estimate}
\mathbb{E}\left[\nu_{h_L+a_{\varepsilon}g}^q(A_n)\right]\leq Ce^{-nb}
\end{equation}
for some positive constants $C$ and $b$, and clearly this is enough to prove both Equations~\eqref{eq:bound} and~\eqref{eq:bound2}. Now to see why Equation~\eqref{eq:estimate} holds true, let $c>0$ be such that $q<\frac{2}{\gamma}(Q-\gamma-c)$ and $\varepsilon>0$ such that $a_{\varepsilon}<c$. We decompose $h_0=h_r+h_a$ between radial and angular parts, where $(h_r(e^{-t}))_{t\in\mathbb{R}}$ as same law as a two-sided Brownian Motion $(B_{2t})_{t\in\mathbb{R}}$, and $h_r$ and $h_a$ are independent (see \cite{DMS14} for details).
Then by definition of $\nu_h$ we have that
\begin{align*}
    \mathbb{E}\left[\nu_{h_L+a_{\varepsilon}g}^q(A_n)\right]&=\lim\limits_{\delta\rightarrow0}\mathbb{E}\left[\left(
\int_{A_n}\delta^{\gamma^2/4}e^{\frac{\gamma}{2}(h_r(z)+h_a^{\delta}(z) + \frac{\gamma}{2}(G(z,0)+G(z,1))+(3/2\gamma -Q)g(z) + a_{\varepsilon} g(z))}
dz\right)^q\right]\\
&\leq \tilde{C}\mathbb{E}\left[e^{-nq\frac{\gamma}{2}(Q-\gamma-c)+q\frac{\gamma}{2} s_n}\right]\lim\limits_{\delta\rightarrow0}\mathbb{E}\left[\left(
\int_{A_n}\frac{1}{|z|^{q\frac{\gamma}{2}}}\delta^{\gamma^2/4}e^{\frac{\gamma}{2} h_a^{\delta}(z)}
dz\right)^q\right]
\end{align*}
where $s_n=\underset{t\in[2(n-1),2n]}{\sup}B_t$ and $\tilde{C}$ absorbs $r^{-q\frac{\gamma}{2}}$, the constant order $\frac{\gamma}{2}(G(z,0)+ G(z,1))+\gamma g(z)$ and the difference between $\log \left(|z|\vee 1\right)$ and $\log |z|$. On the one hand, by the Markov property for the Brownian Motion,
\[\mathbb{E}\left[e^{-nq\frac{\gamma}{2}(Q-\gamma-c)+q\frac{\gamma}{2} s_n}\right]\leq C e^{-n\frac{\gamma^2}{4} q(\frac{2}{\gamma}(Q-\gamma-c)-q)}\]
where $\frac{\gamma^2}{4} q(\frac{2}{\gamma}(Q-\gamma-c)-q)$ is positive by assumption. On the other hand, by Kahane convexity inequality (see \cite[Theorem 2.1]{RV13} or \cite[Lemma 5.4]{DMS14} for details) we have that the q-th moments of $\nu_{h_L}(A_n)$ are bounded uniformly in $n$ for $q<\frac{4}{\gamma^2}$ (which occurs since $\frac{2}{\gamma}(Q-\gamma)<\frac{4}{\gamma^2}$) so we get \[
\mathbb{E}\left[\left(
\int_{A_n}\frac{1}{|z|^{\gamma Q}}\delta^{\gamma^2/2}e^{\gamma h_a^{\delta}(z)}
dz\right)^q\right]\leq c_q e^{-n\gamma q Q}.
\]
This allows us to provide the desired bound
\[
\mathbb{E}\left[\nu_{h_L+a_{\varepsilon}g}^q(A_n)\right]\leq Ce^{-b n}.
\]
Therefore we obtain that
\[\mathbb{E}\left[\nu_{h_L+a_{\varepsilon}g}^q(\partial\mathbb{H})\right]=\mathbb{E}\left[\left(\nu_{h_L}(\partial\mathbb{H}\cap r\mathbb{D})+\sum_{n\geq0} \nu_{h_L+a_{\varepsilon}g}(A_n)\right)^q\right]\]
is uniformly bounded for $\varepsilon$ small enough, and that
\[\lim\limits_{R\rightarrow \infty}\limsup\limits_{\varepsilon\rightarrow 0}  \mathbb{E}\left[\nu_{h_L+a_{\varepsilon}g}^q(\partial\mathbb{H}\setminus R\mathbb{D})\right]=\lim\limits_{n\rightarrow \infty}\limsup\limits_{\varepsilon\rightarrow 0} \mathbb{E}\left[\left(\sum_{k\geq n} \nu_{h_L+a_{\varepsilon}g}(A_k)\right)^q\right] = 0.\]

This allows us to conclude that for any $q<\frac{2}{\gamma}(Q-\gamma)$ Equation~\eqref{eq:toprove2} holds, that is
\[\lim\limits_{\varepsilon\rightarrow 0}  \mathbb{E}\left[F(h_L+a_{\varepsilon}g) \nu_{h_L+a_{\varepsilon}g}^q(\partial\mathbb{H})\right]=\mathbb{E}\left[F(h_L)\nu_{h_L}^q(\partial\mathbb{H})\right].\]
Eventually it suffices to notice that $-\frac{2s}{\gamma}=\frac{2Q-3\gamma}{\gamma}<\frac{2}{\gamma}(Q-\gamma)$ for $\gamma>0$.
\end{proof}

With the same proof, the analog result for the bulk measure remains true, provided that we have chosen a second singularity in $\mathbb{H}$, considered as exponent $\frac{Q-3\gamma/2}{\gamma}$ and $q<\frac{1}{\gamma}(Q-\gamma)$ (the factor $2$ accounts for the fact that we consider the bulk measure instead of the boundary measure).

In order to obtain a result that fits to the setting of the previous proposition, we state here a result that can be found in \cite{MS13}, in the case of the whole-plane GFF. Using the odd/even decomposition of the whole-plane GFF also provides the same result for the GFF in the upper half-plane with free boundary conditions. 
\begin{prop}
\hspace{0cm}\\
Let $(r_n)_{n\in\mathbb{N}}$ be a sequence tending to $+\infty$ as $n\rightarrow+\infty$ and set $D_n:=r_n\mathbb{D}\cap\mathbb{H}$. On $D_n$ let $h_n$ be a GFF with zero (resp. free) boundary conditions on $\partial D_n\cap\mathbb{H}$ (resp. $\partial D_n\cap\partial\mathbb{H}$). Also let $h$ be a GFF on $\mathbb{H}$ with free boundary conditions.

Then on any bounded subset $D$ the total variational distance between the law of $h_n$ and $h$ restricted to $D$ goes to zero as $n\rightarrow +\infty$, seen as distribution modulo additive constant.
\end{prop}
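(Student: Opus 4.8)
The plan is to reduce the statement to the whole-plane analogue recalled just above from \cite{MS13} by means of the odd/even decomposition with respect to the reflection $\iota\colon z\mapsto\bar z$ across the real axis. First I would fix the bounded set $D\subset\mathbb H$ and put $D':=D\cup\iota(D)$, a bounded subset of $\mathbb C$; for $n$ large enough one has $D\subset D_n$ and $D'\subset r_n\mathbb D$. Let $\tilde h_n$ be a zero-boundary GFF on the ($\iota$-invariant) domain $r_n\mathbb D$ and $\tilde h$ a whole-plane GFF on $\mathbb C$, both viewed modulo additive constant; the whole-plane result of \cite{MS13} is precisely that the total variation distance between the laws of $\tilde h_n|_{D'}$ and $\tilde h|_{D'}$, as distributions modulo additive constant, tends to $0$ as $n\to\infty$.

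The second step is to identify the fields $h_n$ and $h$ as even parts. For a GFF $g$ on an $\iota$-invariant domain put
\[
g^{+}:=\frac{1}{\sqrt{2}}\bigl(g+g\circ\iota\bigr),\qquad g^{-}:=\frac{1}{\sqrt{2}}\bigl(g-g\circ\iota\bigr),
\]
so that $g=\frac{1}{\sqrt{2}}(g^{+}+g^{-})$; a direct covariance computation, using the $\iota$-invariance of the domain, gives that $g^{+}$ and $g^{-}$ are independent, $g^{+}$ is $\iota$-symmetric and $g^{-}$ is $\iota$-antisymmetric. Applied to $\tilde h_n$ and using $G^{\mathrm{Dir}}_{r_n\mathbb D}(\iota a,\iota b)=G^{\mathrm{Dir}}_{r_n\mathbb D}(a,b)$, this shows that for $x,y\in D_n$ the covariance kernel of $\tilde h_n^{+}$ equals $G^{\mathrm{Dir}}_{r_n\mathbb D}(x,y)+G^{\mathrm{Dir}}_{r_n\mathbb D}(x,\iota y)$. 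This kernel is harmonic in $x$ away from $x=y$, vanishes on the arc $\partial(r_n\mathbb D)\cap\mathbb H$ (both summands vanish there), is invariant under $x\mapsto\iota x$ and hence has vanishing normal derivative along $[-r_n,r_n]$, and carries the logarithmic singularity of a GFF on $D_n$; it is therefore exactly the Green's kernel of $D_n$ with zero boundary conditions on $\partial D_n\cap\mathbb H$ and free boundary conditions on $\partial D_n\cap\partial\mathbb H$, so $\tilde h_n^{+}|_{D_n}$ has the law of $h_n$, and \emph{a fortiori} the same is true modulo additive constant. Running the identical computation with the whole-plane kernel $-\log|x-y|$ yields that $\tilde h^{+}|_{\mathbb H}$ has covariance $-\log(|x-y||x-\bar y|)=G_{\mathbb H}(x,y)$, i.e. $\tilde h^{+}|_{\mathbb H}$ is a free-boundary GFF on $\mathbb H$, so $\tilde h^{+}|_{D}$ has the law of $h|_{D}$ modulo additive constant.

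It remains to conclude by the contraction of total variation under measurable pushforwards. The map $\Phi\colon g\mapsto\frac{1}{\sqrt{2}}(g+g\circ\iota)|_{D}$ from distributions on $D'$ to distributions on $D$ is linear and continuous, and it descends to the quotient modulo additive constants (it sends a constant $c$ to the constant $\sqrt{2}\,c$). By the second step $\Phi$ transports the law of $\tilde h_n|_{D'}$ to the law of $h_n|_{D}$ and the law of $\tilde h|_{D'}$ to the law of $h|_{D}$, all taken modulo additive constant, so since total variation distance cannot increase under a measurable map,
\[
d_{\mathrm{TV}}\bigl(\mathrm{law}(h_n|_{D}),\mathrm{law}(h|_{D})\bigr)\le d_{\mathrm{TV}}\bigl(\mathrm{law}(\tilde h_n|_{D'}),\mathrm{law}(\tilde h|_{D'})\bigr)\xrightarrow[n\to\infty]{}0
\]
by the first step, which is the assertion. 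The argument involves no genuine analytic difficulty beyond the input from \cite{MS13}; the only delicate point — and the one I would be most careful about — is the bookkeeping in the second step, namely checking that the even part of the disk (resp. plane) GFF is exactly the stated mixed-boundary (resp. free-boundary) field, and keeping the additive-constant ambiguity consistent on both sides of the comparison.
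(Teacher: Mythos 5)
Your proposal is correct and follows exactly the route the paper indicates: invoke the whole-plane convergence result from \cite{MS13} and transfer it to $\mathbb H$ via the odd/even decomposition across $\mathbb R$. The covariance calculation identifying $\tilde h_n^{+}|_{D_n}$ with the mixed-boundary GFF on $D_n$ and $\tilde h^{+}|_{\mathbb H}$ with the free-boundary GFF on $\mathbb H$ is done properly, the observation that $\Phi$ descends to the quotient modulo additive constants is the right bookkeeping, and the contraction of total variation under the measurable pushforward $\Phi$ closes the argument; this is precisely the content the paper compresses into the one-line remark preceding the proposition.
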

Thanks to this result, we can assume that in Lemma \ref{CIU} we were working with $h_L^{\varepsilon}$ that we have already introduced before. This justifies the fact that the setting of the proof of Proposition \ref{Approximation} still applied to Lemma \ref{CIU}.
\subsection{Approximation result for the unit boundary length quantum disk with three log-singularities}\label{subsection:final}
Combining the previous statements yields the following result, which provides us with a limiting construction for the unit boundary length quantum disk with three log-singularities thanks to a procedure very similar to the one obtained for the unit boundary length quantum disk with three marked points.
\begin{theorem}[Approximation result for the unit boundary length quantum disk with three log-singularities]\label{Approx_HRV}
\hspace{0cm}\\
Denote $D^{\varepsilon}:=\frac{1}{\sqrt{\varepsilon}}\mathbb{D}\cap\mathbb{H}$ and let $h_0^{\varepsilon}$ be a GFF on $D^{\varepsilon}$ with zero (resp. free) boundary conditions on $\partial D^{\varepsilon}\cap\mathbb H$ (resp. $\partial D^{\varepsilon}\cap\partial\mathbb H$). Sample $\hat{w}^{\varepsilon}$ on $\frac{1}{\sqrt{\varepsilon}}\mathbb{D}\cap\partial\mathbb{H}$ according to the Liouville boundary measure of the field 
\[h^{\varepsilon}=h_0^{\varepsilon}+\frac{1}{2}(2Q-\gamma)\log\varepsilon + \frac{\gamma}{2} G_{D^{\varepsilon}}(z,0)\] 
under the probability measure $\propto\nu_{h^{\varepsilon}}(\partial\mathbb{H}) d\mathbb{P}^{\varepsilon}$. Eventually conformally map the $3$-uple $(0,\hat w^\varepsilon,\infty)$ to $(0,1,\infty)$ with a M\"obius transform of $\mathbb{H}$ and denote by $\hat{h}^\varepsilon$ the field thus obtained, along with $\left(\mu_{\hat h^{\varepsilon}}, \nu_{\hat h^{\varepsilon}}\right)$ its Liouville Quantum Gravity measures. 

Then $(\mu_{\hat h^{\varepsilon}}, \nu_{\hat h^{\varepsilon}})$ conditioned on the event that $\nu_{\hat h^{\varepsilon}}(\partial\mathbb{H})\in[e^{-\gamma\delta},e^{\gamma\delta}]$ converges weakly in law to $(\mu_{HRV}^{UBL},\nu_{HRV}^{UBL})$ as $\varepsilon$ goes to zero and then $\delta$ goes to zero, where $(\mu_{HRV}^{UBL},\nu_{HRV}^{UBL})$ is the unit boundary length quantum disk described by Equation \eqref{UBL} with log-singularities $(\gamma,\gamma,\gamma)$ at $(0,1,\infty)$.
\end{theorem}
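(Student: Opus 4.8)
The plan is to assemble the building blocks already established — Propositions~\ref{Sampling1} and~\ref{Approximation}, together with the estimates of Lemma~\ref{LOCS} and Lemma~\ref{CIU} — and to match the outcome against the explicit formula~\eqref{UBL}. Fix a non-negative bounded continuous (for weak convergence of measures) functional $G$ on pairs of Radon measures on $\overline{\mathbb{H}}$; the goal is to show that $\mathbb{E}\bigl[G(\mu_{\hat h^{\varepsilon}},\nu_{\hat h^{\varepsilon}})\bigm| E^{\varepsilon}_{\delta}(\partial\mathbb{H})\bigr]$ converges, as $\varepsilon\to0$ and then $\delta\to0$, to $\mathbb{E}\bigl[G(\mu^{UBL}_{HRV},\nu^{UBL}_{HRV})\bigr]$; note that $E^{\varepsilon}_{\delta}(\partial\mathbb{H})=\{\nu_{h^{\varepsilon}}(\partial\mathbb{H})\in[e^{-\gamma\delta},e^{\gamma\delta}]\}$ coincides, by conformal covariance of the boundary measure, with $\{\nu_{\hat h^{\varepsilon}}(\partial\mathbb{H})\in[e^{-\gamma\delta},e^{\gamma\delta}]\}$. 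By Proposition~\ref{Sampling1}, the field $\hat h^{\varepsilon}$ produced by the sampling-and-M\"obius procedure has the law displayed there, so it is precisely the field treated in Proposition~\ref{Approximation}. The crucial observation is that
\[
h\longmapsto F(h):=G\!\left(\frac{\mu_{h}}{\nu_{h}(\partial\mathbb{H})^{2}},\ \frac{\nu_{h}}{\nu_{h}(\partial\mathbb{H})}\right)
\]
is invariant under adding a constant to $h$, since shifting $h$ by $c$ multiplies $\mu_{h}$ by $e^{\gamma c}$ and $\nu_{h}$ by $e^{\gamma c/2}$; hence $F$ is an admissible test functional for Proposition~\ref{Approximation}. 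Moreover, on $E^{\varepsilon}_{\delta}(\partial\mathbb{H})$ the boundary mass lies in $[e^{-\gamma\delta},e^{\gamma\delta}]$, so renormalising $(\mu_{\hat h^{\varepsilon}},\nu_{\hat h^{\varepsilon}})$ to have unit boundary length perturbs $G$ by an amount that vanishes as $\delta\to0$; it therefore suffices to prove the convergence with $G(\mu_{\hat h^{\varepsilon}},\nu_{\hat h^{\varepsilon}})$ replaced by $F(\hat h^{\varepsilon})$.

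I would then apply Proposition~\ref{Approximation} to $F$ and to the constant functional $1$ — the normalising sequence $C^{\varepsilon}_{\delta}$ there does not depend on the functional — obtaining, in the iterated limit, $C^{\varepsilon}_{\delta}\,\mathbb{E}\bigl[F(\hat h^{\varepsilon})\mathds{1}_{E^{\varepsilon}_{\delta}(\partial\mathbb{H})}\mathds{1}_{H^{\varepsilon}}\bigr]\to\mathbb{E}\bigl[F(h_{L})\,\nu_{h_{L}}(\partial\mathbb{H})^{-2s/\gamma}\bigr]$ and $C^{\varepsilon}_{\delta}\,\mathbb{P}\bigl(E^{\varepsilon}_{\delta}(\partial\mathbb{H})\cap H^{\varepsilon}\bigr)\to\mathbb{E}\bigl[\nu_{h_{L}}(\partial\mathbb{H})^{-2s/\gamma}\bigr]$, where $h_{L}$ is the field~\eqref{GFF_semi_circle} with $\gamma$-singularities at $(0,1,\infty)$ and $-2s/\gamma=(2Q-3\gamma)/\gamma<\frac{2}{\gamma}(Q-\gamma)$, so both moments are finite by Lemma~\ref{CIU}. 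Taking the ratio (the inner $\varepsilon$-limits exist for each fixed $\delta$ because the iterated limit does, and the denominator stays bounded away from $0$ for $\delta$ small), $\mathbb{E}\bigl[F(\hat h^{\varepsilon})\bigm| E^{\varepsilon}_{\delta}(\partial\mathbb{H})\cap H^{\varepsilon}\bigr]$ converges to $\mathbb{E}[F(h_{L})\nu_{h_{L}}(\partial\mathbb{H})^{-2s/\gamma}]/\mathbb{E}[\nu_{h_{L}}(\partial\mathbb{H})^{-2s/\gamma}]$, and by~\eqref{UBL} — transported from $\mathbb{D}$ to $\mathbb{H}$ via Proposition~\ref{CCD} and unwinding the definition of $F$ — this is exactly $\mathbb{E}\bigl[G(\mu^{UBL}_{HRV},\nu^{UBL}_{HRV})\bigr]$.

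It remains to discard the auxiliary event $H^{\varepsilon}=\{A_{\varepsilon}-s\log\varepsilon\ge-|\log\varepsilon|^{2/3}\}$: writing $\mathbb{E}[F(\hat h^{\varepsilon})\mid E^{\varepsilon}_{\delta}(\partial\mathbb{H})]=\mathbb{E}[F(\hat h^{\varepsilon})\mid E^{\varepsilon}_{\delta}(\partial\mathbb{H})\cap H^{\varepsilon}]\,\mathbb{P}(H^{\varepsilon}\mid E^{\varepsilon}_{\delta}(\partial\mathbb{H}))+\mathbb{E}[F(\hat h^{\varepsilon})\mathds{1}_{(H^{\varepsilon})^{c}}\mid E^{\varepsilon}_{\delta}(\partial\mathbb{H})]$, the last term is at most $\|F\|_{\infty}\,\mathbb{P}\bigl((H^{\varepsilon})^{c}\mid E^{\varepsilon}_{\delta}(\partial\mathbb{H})\bigr)$, which goes to $0$ in the iterated limit by the second estimate of Lemma~\ref{LOCS} (after matching $H^{\varepsilon}$ with the event $\mathcal H^{\varepsilon}$ there through the conformal map $\mathbb{H}\to\mathcal S$), while the first term converges to the ratio above times $1$; together with the previous paragraph this yields the theorem. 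The real difficulty is not this bookkeeping but lies upstream, in Proposition~\ref{Approximation}: the perturbation $\frac{A_{\varepsilon}\,g}{\log d_{\varepsilon}}+A_{\varepsilon}+r_{\varepsilon}$ coming from the sampling of $\hat w^{\varepsilon}$ carries the unbounded Gaussian term $A_{\varepsilon}$, and it is the Girsanov tilt $d\mathbb{Q}^{\varepsilon}\propto e^{-s(A_{\varepsilon}+s\log d_{\varepsilon})}\,d\hat{\mathbb{P}}^{\varepsilon}_{\delta}$, combined with the uniform moment bounds and the continuity in $\varepsilon$ of the weighted boundary GMC from Lemma~\ref{CIU}, that turns the Gaussian integral over $A_{\varepsilon}$ into the weight $\nu_{h_{L}}(\partial\mathbb{H})^{-2s/\gamma}$ and makes the constants $C^{\varepsilon}_{\delta}$ cancel cleanly in the final ratio; verifying the dominated-convergence step there for both signs of $s$ through the tail bound~\eqref{eq:estimate} is where the care is concentrated.
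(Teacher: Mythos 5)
Your proposal is correct and follows essentially the same route as the paper: invoke Proposition~\ref{Sampling1} to identify $\hat h^\varepsilon$, observe that $F(h)=G(\mu_h/\nu_h(\partial\mathbb{H})^2,\nu_h/\nu_h(\partial\mathbb{H}))$ is constant-invariant, apply Proposition~\ref{Approximation} (with the functionals $F$ and $1$, taking the ratio), argue that the normalisation by boundary length costs $o_\delta(1)$ on $E^\varepsilon_\delta(\partial\mathbb{H})$, and then discard the auxiliary event $H^\varepsilon$. One small correction: the event $H^\varepsilon$ lives in the weighted ($\hat{\mathbb{P}}^\varepsilon_\delta$) setting of Proposition~\ref{Sampling1}, so the relevant estimate $\hat{\mathbb{P}}^{\varepsilon}(H^{\varepsilon}\mid E^{\varepsilon}_{\delta}(\partial\mathbb{H}))\to1$ is precisely Lemma~\ref{LOCS2} (the weighted adaptation), not Lemma~\ref{LOCS} itself; your parenthetical about matching events via the conformal map to the strip is exactly the content of that adaptation, and the paper isolates it as a separate lemma.
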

\begin{proof} Recall that the law of the unit boundary length quantum disk is given by Equation \eqref{UBL}:\[
\expect{F(\mu^{UBL}_{HRV},\nu^{UBL}_{HRV})}=
\frac{\mathbb{E}\left[F(\frac{\mu_{h_L}}{\nu_{h_L}(\partial \mathbb{D})^2}, \frac{\nu_{h_L}}{\nu_{h_L}(\partial \mathbb{D})})\nu_{h_L}(\partial\mathbb{D})^{-\frac{2s}{\gamma}}\right]}{\mathbb{E}\left[\nu_{h_L}(\partial \mathbb{D})^{-\frac{2s}{\gamma}}\right]}
\]
This law is described by a random variable of the form $G(h_L)$ under the probability measure $\propto \nu_{h_L}(\partial\mathbb{H})^{-\frac{2s}{\gamma}}$, where $G$ only depends on $h_L-\frac{2}{\gamma}\log \nu_{h_L}(\partial\mathbb{H})$. We can therefore apply the previous results to obtain that for any bounded, continuous functional on the space of Radon measures on $\mathbb{H}\times\partial\mathbb{H}$ we have 
\begin{align*}
&\lim\limits_{\delta\rightarrow 0} \lim\limits_{\varepsilon\rightarrow 0}  \mathbb{E}\left[F(\mu_{h^{\varepsilon}}, \nu_{h^{\varepsilon}})\vert E^{\varepsilon}_{\delta}(\partial\mathbb{H})\cap H^{\varepsilon} \right]\\
&=\lim\limits_{\delta\rightarrow 0} \lim\limits_{\varepsilon\rightarrow 0}  
\mathbb{E}\left[F(\frac{\mu_{h^{\varepsilon}}}{\nu_{h^{\varepsilon}}(\partial\mathbb{H})^2}, \frac{\nu_{h^{\varepsilon}}}{\nu_{h^{\varepsilon}}(\partial\mathbb{H})})\vert E^{\varepsilon}_{\delta}(\partial \mathbb{H})\cap H^{\varepsilon} \right]\quad\text{ since we condition on }\nu_{h^{\varepsilon}}(\partial\mathbb{H})\in\lbrace e^{-\gamma\delta},e^{\gamma\delta}\rbrace\\
&=\frac{\mathbb{E}\left[F(\frac{\mu_{h_L}}{\nu_{h_L}(\partial\mathbb{H})^2}, \frac{\nu_{h_L}}{\nu_{h_L}(\partial\mathbb{H})})\nu_{h_L}(\partial\mathbb{H})^{-\frac{2s}{\gamma}}\right]}{\mathbb{E}\left[\nu_{h_L}(\partial\mathbb{H})^{-\frac{2s}{\gamma}}\right]}\quad\text{according to Proposition \ref{Approximation} }\\
&=\mathbb{E}\left[F(\mu_{HRV}^{UBL}, \nu_{HRV}^{UBL})\right].
\end{align*}
But we have conditioned here on the event $E^{\varepsilon}_{\delta}(\partial\mathbb{H})\cap H^{\varepsilon}$, so to prove the result it remains to show that $\lim\limits_{\delta\rightarrow 0} \lim\limits_{\varepsilon\rightarrow 0}  \hat{\mathbb{P}}^{\varepsilon}(H^{\varepsilon}\vert E^{\varepsilon}_{\delta}(\partial\mathbb{H}))=1$ which is in the statement of Lemma \ref{LOCS2}.
This concludes the proof.
\end{proof}

We have described before two similar limiting procedures whose only difference between them was that in the DMS approach, we sampled from the law of $h^{\varepsilon}$ under the usual probability measure $\mathbb{P}^{\varepsilon}_\delta$, while in the HRV approach we sampled from the law of $h^{\varepsilon}$ under the weighted probability measure $d\hat{ \mathbb{P}}^{\varepsilon}_\delta \propto \nu_{h^{\varepsilon}}(\partial\mathbb{H})d\mathbb{P}^{\varepsilon}_\delta$. This difference becoming negligible in the limit, we can adapt the result of Lemma \ref{LOCS} to the HRV approach:
\begin{lemma}\label{LOCS2}
\hspace{0cm}\\
In the setting of Proposition \ref{Sampling1} we have the following estimates: \begin{itemize}
\item Conditionally on $E^{\varepsilon}_{\delta}(\partial\mathbb{H})$, $\lim\limits_{\varepsilon\rightarrow0}\frac{\log|\hat{w}^{\varepsilon}|}{|\log\varepsilon|^{2/3}}=0$ in law.
\item $\lim\limits_{\delta\rightarrow 0} \lim\limits_{\varepsilon\rightarrow 0}  \hat{\mathbb{P}}^{\varepsilon}(H^{\varepsilon}\vert E^{\varepsilon}_{\delta}(\partial\mathbb{H}))=1$, 
where $H^{\varepsilon}=\lbrace A_{\varepsilon}-s\log\varepsilon\geq - |\log\varepsilon|^{2/3}\rbrace$ like above.
\end{itemize}
\end{lemma}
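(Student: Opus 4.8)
The plan is to deduce both estimates from their counterparts in Lemma~\ref{LOCS} by transporting them through the conformal change of coordinates of Proposition~\ref{Sampling1}, the one genuinely new point being a comparison of the two sampling schemes. In Lemma~\ref{LOCS} the field is sampled from $\mathbb{P}^{\varepsilon}$ and the marked point from $\nu_{h^{\varepsilon}}/\nu_{h^{\varepsilon}}(\partial\mathbb{H})$, whereas here the field is instead sampled from the rooted measure $\propto\nu_{h^{\varepsilon}}(\partial\mathbb{H})\,d\mathbb{P}^{\varepsilon}$ before the point is sampled in the same way. I would first note that, once both laws are conditioned on $E^{\varepsilon}_{\delta}(\partial\mathbb{H})=\{\nu_{h^{\varepsilon}}(\partial\mathbb{H})\in[e^{-\gamma\delta},e^{\gamma\delta}]\}$, the joint law of $(h^{\varepsilon},\hat w^{\varepsilon})$ in the present scheme is absolutely continuous with respect to the one of Lemma~\ref{LOCS}, with Radon--Nikodym density $\nu_{h^{\varepsilon}}(\partial\mathbb{H})/\mathbb{E}[\nu_{h^{\varepsilon}}(\partial\mathbb{H})\mid E^{\varepsilon}_{\delta}(\partial\mathbb{H})]$, which on $E^{\varepsilon}_{\delta}(\partial\mathbb{H})$ is pinched between $e^{-2\gamma\delta}$ and $e^{2\gamma\delta}$. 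Consequently, for every event $\mathcal{G}$ measurable with respect to $(h^{\varepsilon},\hat w^{\varepsilon})$,
\[
e^{-2\gamma\delta}\,\mathbb{P}^{\varepsilon}(\mathcal{G}\mid E^{\varepsilon}_{\delta}(\partial\mathbb{H}))\le\hat{\mathbb{P}}^{\varepsilon}(\mathcal{G}\mid E^{\varepsilon}_{\delta}(\partial\mathbb{H}))\le e^{2\gamma\delta}\,\mathbb{P}^{\varepsilon}(\mathcal{G}\mid E^{\varepsilon}_{\delta}(\partial\mathbb{H})),
\]
so any event whose probability under the Lemma~\ref{LOCS} sampling tends to $1$ (resp.\ $0$) in the iterated limit $\varepsilon\to0$ then $\delta\to0$ does the same in the present one.

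For the \emph{first estimate} I would use the maximal-embedding conformal map onto the half-strip $\mathcal{D}^{\varepsilon}$ of Lemma~\ref{LOCS} (as in the proof of Proposition~\ref{ADMS1bis}): it sends $\hat w^{\varepsilon}$ to a boundary point of real part $\log|\hat w^{\varepsilon}|$ up to a deterministic $O(1)$ shift and carries $E^{\varepsilon}_{\delta}(\partial\mathbb{H})$ to $\mathcal{E}^{\varepsilon}_{\delta}(\partial\mathcal{S})$, so the first bullet of Lemma~\ref{LOCS} together with the comparison above yields the claim (in fact already for fixed $\delta$). The same estimate then settles the two properties of $r_{\varepsilon}$ deferred here from Proposition~\ref{Sampling1}: writing $r_{\varepsilon}(z)=\tfrac{\gamma}{2}\bigl(-4\log|\hat w^{\varepsilon}|-2\log|1-\varepsilon(\hat w^{\varepsilon})^{2}z|\bigr)$, the constant term divided by $|\log\varepsilon|^{2/3}$ vanishes in law by what was just proved, while $\log|\hat w^{\varepsilon}|=o(|\log\varepsilon|)$ forces $\varepsilon(\hat w^{\varepsilon})^{2}\to0$ in probability, whence the $z$-dependent term goes to $0$ uniformly on compact subsets of $\mathbb{H}$.

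For the \emph{second estimate}, using Proposition~\ref{Sampling1}, Remark~\ref{rmk:A} and $\log d_{\varepsilon}=|\log\varepsilon|(1+o(1))$ (a consequence of the first estimate), I would identify $A_{\varepsilon}-s\log\varepsilon$, up to bounded terms and up to $r_{\varepsilon}$ (negligible at scale $|\log\varepsilon|^{2/3}$), with the value at $\mathrm{Re}(w^{\varepsilon})$ of the radial part of the strip field, so that $H^{\varepsilon}$ corresponds to the event $\mathcal{H}^{\varepsilon}$ of Lemma~\ref{LOCS}; the comparison of the first paragraph then reduces the statement to the second bullet of Lemma~\ref{LOCS}. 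Concretely one reruns that argument: one conditions in addition on the event $F^{\varepsilon}_{C}$ that the maximum of $B_{2t}-(Q-\gamma)t+(Q-\gamma)\log\varepsilon$ exceeds $-C$, which by \cite[Lemma A.4]{DMS14} has conditional probability tending to $1$ as $C\to\infty$ uniformly in $\varepsilon$, and on which this maximum is $\ge-C\ge-|\log\varepsilon|^{2/3}$ for $\varepsilon$ small, while $\mathrm{Re}(w^{\varepsilon})$ stays within a tight distance of the location of the maximum.

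The hard part will not be the reweighting, which is essentially free, but the bookkeeping underlying the second estimate: one must check that, after the conformal reorganization of Proposition~\ref{Sampling1}, the extra $\gamma$-log-singularity created at $\hat w^{\varepsilon}$ by the rooted-measure decomposition --- now sitting at $1\in\partial\mathbb{D}$, on the averaging semicircle $\partial\mathbb{D}\cap\mathbb{H}$ --- shifts $A_{\varepsilon}$ only by an $O(1)$ amount, and that the various deterministic contributions ($s$ versus $Q-\gamma$, the term $\tfrac{1}{2}(2Q-\gamma)\log\varepsilon$, the factor $\log d_{\varepsilon}$, and $r_{\varepsilon}$) combine so that $H^{\varepsilon}$ and $\mathcal{H}^{\varepsilon}$ differ by an error of order strictly smaller than $|\log\varepsilon|^{2/3}$, which is exactly what makes the transfer from Lemma~\ref{LOCS} legitimate.
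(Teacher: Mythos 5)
Your proposal is correct and follows essentially the same route as the paper: both transfer the two estimates of Lemma~\ref{LOCS} to the present (weighted) sampling scheme via the conformal map to the strip, with the same identification of $A_\varepsilon$ with the radial value of the strip field at $\mathrm{Re}(w^\varepsilon)$ up to the error $\delta_\varepsilon=o(|\log\varepsilon|^{2/3})$, and both dismiss the difference between the weighted and unweighted measures as negligible under the conditioning on $E^\varepsilon_\delta(\partial\mathbb{H})$. Your Radon--Nikodym pinching $e^{-2\gamma\delta}\le d\hat{\mathbb{P}}^\varepsilon_\delta/d\mathbb{P}^\varepsilon_\delta\le e^{2\gamma\delta}$ makes that last step explicit where the paper only asserts it, which is a useful clarification rather than a different argument.
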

\begin{proof}
Thanks to the conformal mapping $\psi^{-1}:z\mapsto i\pi-\log z$, we can work on $\mathcal{S}$ with the field $\mathfrak h^{\varepsilon}:=\mathfrak h_0^{\varepsilon}-(Q-\gamma)\mathrm{Re}(z)+(Q-\gamma)\log\varepsilon$, where the latter is defined like in Lemma \ref{LOCS}. By doing so the sampled point $\hat{w}^{\varepsilon}$ is sent to $i\pi-\log \hat{w}^{\varepsilon}$, whose real part is precisely $-\log |\hat{w}^{\varepsilon}|$, and the variable $A_\varepsilon$ has the law of the mean value of $\hat{\mathfrak h}^{\varepsilon}+\frac\gamma2G_{D^\varepsilon}(\psi(z),\hat{w}^{\varepsilon})$ on $\lbrace0\rbrace\times[0,i\pi]$, where $\hat{\mathfrak h}^{\varepsilon}$ is defined in a way similar to $\mathfrak h^{\varepsilon}$ but on the domain $\hat{\mathcal D}^{\varepsilon}:=(\log \sqrt{\varepsilon} |\hat{w}^{\varepsilon}|,\infty)\times [0,i\pi]$

The first point then follows from the first item of Lemma \ref{LOCS} since the total variation distance between the law of $i\pi-\log \hat{w}^{\varepsilon}$ and $\hat\omega^{\varepsilon}$ sampled according to $\hat{\mathfrak h}^{\varepsilon}$ in $\mathcal{S}$ goes to zero when both $\varepsilon$ and $\delta$ go to zero since we condition on the event $E^{\varepsilon}_{\delta}(\partial\mathbb{H})$.

For the second point, and using the explicit expression of the field $\hat{\mathfrak h}^{\varepsilon}$ and of the Green's kernel $G_{D^\varepsilon}$, we may apply an horizontal translation and use the first point to see that $A_\varepsilon$ has the law of the mean value of the field $\hat{\mathfrak h}^{\varepsilon}+s\log\varepsilon+\delta_\varepsilon$ on $\lbrace\log |\hat{w}^{\varepsilon}|\rbrace\times [0,i\pi]$, where $\delta_\varepsilon=o(|\log \varepsilon|^{2/3})$ with probability $1-o(1)$. In the end we can conclude since we are in the framework of Lemma \ref{LOCS}---apart from the fact that we work under the weighted probability measure $\hat{\mathbb{P}}^\varepsilon$, but since we condition on the event $E^{\varepsilon}_{\delta}(\partial\mathbb{H})$ the claim remains valid.
\end{proof}

\subsection{Equivalence between the two definitions}\label{conclusion}
In this last subsection we eventually show that the two definitions that have been given to describe the unit boundary length quantum disk actually coincide in the sense of Theorem \ref{Equ}. 

\begin{theorem}[Equivalence of the perpectives]\label{theorem_three}
Let $\mathbb{D}$ be the unit disk and $(z_1,z_2,z_3)$ be distinct points on its boundary.
Let $(\mu_{HRV}^{UBL},\nu_{HRV}^{UBL})$ be the unit boundary length quantum disk with log-singularities $(\gamma,\gamma,\gamma)$ located at $(z_1,z_2,z_3)$.
Likewise assume that $(\mu_{DMS}^{UBL},\nu_{DMS}^{UBL})$ is an instance of the unit boundary length quantum disk with three marked points embedded into $\mathbb{D}$ so that the marked points are $(z_1,z_2,z_3)$.

Then $(\mu_{HRV}^{UBL},\nu_{HRV}^{UBL})$ and $(\mu_{DMS}^{UBL},\nu_{DMS}^{UBL})$ have same law.
\end{theorem}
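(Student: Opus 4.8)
The plan is to read the equality off from the two limiting constructions already established. By Proposition~\ref{ADMS1bis}, $(\mu^{UBL}_{DMS},\nu^{UBL}_{DMS})$ is the iterated ($\varepsilon\to0$, then $\delta\to0$) weak limit of the Liouville measures obtained by forming the field $h^\varepsilon$ on $D^\varepsilon$, conditioning on $E^\varepsilon_\delta(\partial\mathbb H)$, sampling a marked point $\omega^\varepsilon$ from $\nu_{h^\varepsilon}$, and mapping $(0,\omega^\varepsilon,\infty)$ to $(0,1,\infty)$. By Theorem~\ref{Approx_HRV}, $(\mu^{UBL}_{HRV},\nu^{UBL}_{HRV})$ is the iterated limit of \emph{the same} recipe, the sole modification being that the marked point is sampled after the field has been reweighted by $\nu_{h^\varepsilon}(\partial\mathbb H)$ --- equivalently, from the rooted measure $d\mathbb P^\varepsilon(h)\,\nu_h(dw)$ rather than from $\nu_h$ under the conditioned law. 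So the entire content of the theorem is that this reweighting is asymptotically invisible.

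I would first dispose of the embedding. For any triple of distinct boundary points $(z_1,z_2,z_3)$ there is a unique M\"obius map $\mathbb D\to\mathbb H$ carrying them to $(0,1,\infty)$; the DMS disk is by construction a quantum surface, so the law of its measures in one embedding is the pushforward of the law in any other, and the HRV disk has exactly the same covariance (Proposition~\ref{CCD}; see also \cite{HRV16}). It therefore suffices to show that, as random Radon measures on $\overline{\mathbb H}$ with marked points $(0,1,\infty)$, $(\mu^{UBL}_{HRV},\nu^{UBL}_{HRV})$ and $(\mu^{UBL}_{DMS},\nu^{UBL}_{DMS})$ have the same law.

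Next I would compare the two recipes at finite $\varepsilon$ and $\delta$. Fix $\delta>0$. On the event $E^\varepsilon_\delta(\partial\mathbb H)$ one has $\nu_{h^\varepsilon}(\partial\mathbb H)\in[e^{-\gamma\delta},e^{\gamma\delta}]$, and since a M\"obius transform of $\mathbb H$ preserves the total boundary mass this is the same event whether expressed through $h^\varepsilon$ or through $\hat h^\varepsilon$. On that event the only discrepancy between the (unnormalised) joint laws of $(h,w)$ produced by the two recipes is a multiplicative factor $\nu_h(\partial\mathbb H)\in[e^{-\gamma\delta},e^{\gamma\delta}]$, namely the reweighting of the field by the boundary length in the HRV recipe; after normalisation, the two conditioned laws are mutually absolutely continuous with Radon--Nikodym density squeezed between $e^{-2\gamma\delta}$ and $e^{2\gamma\delta}$, \emph{uniformly in} $\varepsilon$. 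The auxiliary conditioning on $H^\varepsilon$ used inside the proof of Theorem~\ref{Approx_HRV} does not perturb this, since $\widehat{\mathbb P}^\varepsilon(H^\varepsilon\mid E^\varepsilon_\delta(\partial\mathbb H))\to1$ by Lemma~\ref{LOCS2} (and likewise in the DMS scheme by Lemma~\ref{LOCS}). Consequently, for any bounded continuous functional $F$ on Radon measures over $\overline{\mathbb H}$,
\[
\big|\,\mathbb E\big[F\mid\mathrm{HRV},E^\varepsilon_\delta\big]-\mathbb E\big[F\mid\mathrm{DMS},E^\varepsilon_\delta\big]\,\big|\;\le\;\big(e^{2\gamma\delta}-e^{-2\gamma\delta}\big)\,\|F\|_\infty+o_\varepsilon(1),
\]
the $o_\varepsilon(1)$ absorbing the normalising constants and the removal of $H^\varepsilon$. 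Letting $\varepsilon\to0$ and then $\delta\to0$, the two left-hand expectations converge to $\mathbb E[F(\mu^{UBL}_{HRV},\nu^{UBL}_{HRV})]$ and $\mathbb E[F(\mu^{UBL}_{DMS},\nu^{UBL}_{DMS})]$ by Theorem~\ref{Approx_HRV} and Proposition~\ref{ADMS1bis}, while the right-hand side tends to $0$; since $F$ is arbitrary, the two laws coincide, which with the first step proves the theorem.

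The main difficulty I expect to lie in the uniformity in $\varepsilon$ underlying the displayed inequality: one must know a priori that the families of measures produced along the two recipes are tight and that the $O(\delta)$ discrepancy is genuinely uniform before $\varepsilon$ is sent to zero, so that the two iterated limits are compared along the same route. This is precisely what is supplied by the moment bounds of Lemma~\ref{CIU} (negative and small positive moments of $\nu_{h_L}(\partial\mathbb H)$, stable under the perturbation $a_\varepsilon g$) together with the location estimates of Lemmas~\ref{LOCS}--\ref{LOCS2}; a secondary, purely bookkeeping point is to track the normalisations and the event $H^\varepsilon$ so that the reweighting by $\nu_{h^\varepsilon}(\partial\mathbb H)$ really is the only discrepancy between the two schemes.
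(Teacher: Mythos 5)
Your argument is correct and follows essentially the same route as the paper's proof: the paper's proof of Theorem~\ref{theorem_three} similarly observes that after conditioning on $E^\varepsilon_\delta(\partial\mathbb H)$ the two procedures of Proposition~\ref{ADMS1bis} and Theorem~\ref{Approx_HRV} are at total variation distance $O(\delta)$ uniformly in $\varepsilon$ (which you justify more explicitly by exhibiting the Radon--Nikodym density $\propto\nu_{h^\varepsilon}(\partial\mathbb H)$ pinched in $[e^{-2\gamma\delta},e^{2\gamma\delta}]$), and then removes the specific choice of marked points by invariance under the change of coordinates~\eqref{CCD2}.
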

\begin{proof}
We have described in the last two sections two procedures giving in the limit the law of the unit boundary length quantum disk with three marked points (Proposition \ref{ADMS1bis}) and three log-singularities (Theorem \ref{Approx_HRV}). Nevertheless the total variation distance between these two procedures when conditioned on $E_{\varepsilon}^{\delta}(\partial\mathbb{H})$ goes to zero as $\delta$ goes to zero, so they must give in the limit the same law. Therefore the result is true when we have chosen $(z_1,z_2,z_3)$ to be precisely $(-1,-i,1)$.

However, since for any distinct points $(z_1,z_2,z_3)$ in the boundary of the disk we can find a conformal mapping $\varphi$ sending this $3$-uplet to $(1,-1,-i)$, and since the law of the unit boundary length quantum disk with three log-singularities and the law of the unit boundary quantum disk with three marked points are invariant under conformal mapping (or to be more precise, under the change of variable \eqref{CCD2}), the result obtained can be extended to the setting of Theorem \ref{Equ}.
\end{proof}

We note that without loss of generality one can show a result similar to that of Theorem \ref{conclusion} when considering a slightly different framework, in which we work with the unit boundary length quantum disk which has one marked point in the bulk and one point on the boundary instead of three points on the boundary. In other words we choose a different way of fixing an embedding for a quantum surface on the disk. One can check that all the steps used in the proof of Theorem \ref{conclusion} remain true under this framework.

\bibliographystyle{alpha}
\bibliography{biblio}

\end{document}